\newtheorem{thm}{Theorem}[section]
\newtheorem{cor}[thm]{Corollary}
\newtheorem{defn}[thm]{Definition}
\newtheorem{exmpl}[thm]{Example}
\newtheorem{lem}[thm]{Lemma}
\newtheorem{prop}[thm]{Proposition}
\newtheorem{rem}[thm]{Remark}
\renewcommand\subsection{\@startsection{subsection}{3}{\z@}%
	{-2.5ex\@plus -1ex \@minus -.25ex}%
	{1.25ex \@plus .25ex}%
	{\normalfont\normalsize\bfseries}}
\renewcommand\subsubsection{\@startsection{subsubsection}{3}{\z@}%
	{-2.5ex\@plus -1ex \@minus -.25ex}%
	{1.25ex \@plus .25ex}%
	{\normalfont\normalsize\bfseries}}
\renewcommand\paragraph{\@startsection{paragraph}{4}{\z@}%
	{-2.5ex\@plus -1ex \@minus -.25ex}%
	{1.25ex \@plus .25ex}%
	{\normalfont\normalsize\bfseries}}
\def\ie{{i.e.}}
\def\C{{\mathbb {C}}}
\def\F{{\mathbb {F}}}
\def\Q{{\mathbb {Q}}}
\def\Z{{\mathbb {Z}}}
\DeclareMathAlphabet{\mathscr}{OT1}{pzc}{m}{it}
\def\ra{{\rightarrow}}
\def\minusset{{-}}
\def\sub{\subseteq}
\def\({\left(}
\def\){\right)}
\def\co{{\,{:}\,}}
\newcommand\comp[1]{{#1^{\operatorname{c}}}}
\DeclareMathOperator{\spec}{spec}
\newcommand{\indeg}{{\operatorname{in}\!\!\,\mbox{-}\!\operatorname{deg}}}
\renewcommand{\span}{\operatorname{span}}
\DeclareMathOperator{\mychar}{char}
\newcommand\algint[2][]{\if!#1\relax O_{#2}\else{O_{#2}^{\phantom{I}}}\fi}
\renewcommand\H[4][!]{{\operatorname{H}^{#2}\!\!\;({#3},{#4}{\if!#1\relax\else(#1)\fi})}}
\newcommand\cond[2][!]{{\operatorname{cond}_{\if!#1\relax\else{\comp{#1}}\fi}(#2)}}
\renewcommand\L[2]{{\operatorname{L}^{#1}(#2)}}
\newcommand{\GL}{{\operatorname{GL}}}
\newcommand\SL{{\operatorname{SL}}}
\newcommand\abs[2][]{\left|{#2}\right|_{#1}}
\newcommand{\set}[1]{{\{#1\}}}
\newcommand{\card}[1]{{\left|{#1}\right|}}
\newcommand\ideal[1]{{\left<{#1}\right>}}
\newcommand\sg[1]{{\ideal{#1}}}
\newcommand\db[1]{{(\:\!\!({#1})\:\!\!)}}
\newcommand\innprod[3][]{{\left<{#2},{#3}\right>_{#1}}}
\newcommand\Cref[1]{{Corollary~\ref{#1}}}
\newcommand\Pref[1]{{Proposition~\ref{#1}}}
\newcommand\Tref[1]{{Theorem~\ref{#1}}}
\long\def\half#1\halved{{\footnotesize{#1}}}
\long\def\forget#1\forgotten{}
\newcommand\thisfile{{\jobname.tex}}
\newcommand\dom[2]{{{#1}\backslash{#2}}}
\newcommand\paper[6]{{{#1},\ {\it{#2}},\ {#3}\ {#4},\ {#5},\ ({#6}).}}
\newcommand\book[4]{{{#1},\ {{#2}}{\if!#3!\relax\else{,\ {#3}}\fi}{\if!#4!\relax\else{,\ {#4}}\fi}.}}
\newcommand\dd[4][!]{\abs[#2]{\:\!\det{\if!#1\relax\:\!\!\else(#1)\fi}}^{#3} #4}
\newif\ifXY 
\newif \iffurther 
\definecolor{Red}{rgb}{1,0,0}
\definecolor{Green}{rgb}{0,1,0.5}
\definecolor{Blue}{rgb}{0,0,1}
\definecolor{Yellow}{cmyk}{0,0,1,0}
\definecolor{Fuchsia}{rgb}{0.75,0,0.75}
\definecolor{Orange}{rgb}{1,0.5,0}
\definecolor{Borde}{cmyk}{0,1,0.5,0.5}
\definecolor{Olive}{cmyk}{0,0,0.5,0.5}
\definecolor{BrgtPrpl}{cmyk}{0.25,0.5,0,0}
\definecolor{DrtBl}{cmyk}{1,0,0,0.5}
\definecolor{Pink}{rgb}{01,0.5,1}
\newcommand\PGL{{\operatorname{PGL}}}
\renewcommand\dim{{\operatorname{dim}}}
\newcommand\colored[1]{{\textcolor{Red}{#1}}}
\renewcommand\span{{\operatorname{span}}}
\begin{document}

\title
[Quotients by non-uniform lattices]
{Quotients of buildings by non-uniform lattices}

\author{Orit Sela, Uzi Vishne, Mary Schaps} 


\renewcommand{\subjclassname}{
      \textup{2000} Mathematics Subject Classification}
\subjclass{Primary 11F70}

\date{October 25, 2021}

\maketitle

\begin{abstract}
	We consider quotients of the Bruhat-Tits building associated to the 
	projective linear groups of dimension $d>2$ over the function field 
	$\mathbb F_q(t)$ by a non-uniform lattice $\Gamma$ which is a congruence subgroup in the non-uniform lattice $\Delta = \PGL_{d}(\mathbb F_q[\frac{1}{t}])$.  We determine a fundamental domain and demonstrate that the quotient, while not cofinite, is at least of finite covolume.  We do the case $d=3$ in considerable detail. 
\end{abstract}

\section{Introduction}

The theory of quotients of buildings originated \cite{LPS} with the study of quotients of the Bruhat-Tits tree associated to $G=\PGL_2(\mathbb Q_p)$ by a cocompact lattice, where $\mathbb Q_p$ denotes the field of $p$-adic numbers.  Such quotients were called Ramanujan graphs if their adjacency matrix $A_X$ satisfied a certain  condition on its spectral sequence.  The theory was then extended to other fields $F$ and to higher dimensional complexes \cite{Li1, LSV1,Sa} by considering lattices in $\PGL_d({F})$ for $d>2$, and requiring bounds on the simultaneous spectrum of $d-1$ commuting adjacency matrices. The complexes satifying the bounds called Ramanujan complexes were constructed as quotients of the Bruhat-Tits building $\mathcal B_d$ associated with $\PGL_d(F)$, modulo congruence subgroups of uniform lattices of $\PGL_d(F)$.

Now, let $B$ be the Bruhat-Tits building associated to $\PGL_{d}(F)$ as defined in \S 2.2 and let $\Gamma$ be a congruence subgroup of a
uniform lattice $\Delta$ of $\PGL_{d}(F)$. The quotient $\Gamma \setminus B$
generates a $d$ dimensional complex of higher dimension. Replacing the adjacency
matrix by the simultaneous spectrum of the $d-1$ Hecke operators,
Li (\cite{Li2}), Lubotzky, Samuels and Vishne (\cite{LSV1} and
\cite{LSV2}) gave examples of the quotients of $\dom{\Gamma}{B}$
which are Ramanujan complexes.

Samuels \cite{Sam} extended the theory to the case of
non-uniform lattices $\Delta$ (which means the quotient $\Delta \backslash B$ in infinite). She used representation theory
of locally compact groups to prove the following:
\begin{thm}
	Let $G=\PGL_{d}(\mathbb{F}_{q}\db{t})$, $\Delta =
	\PGL_{d}(\mathbb{F}_{q}[1/t])$ and let $B$ be a Bruhat-Tits
	building associated to $G$, where $d > 2$. Then for any
	finite index subgroup $\Gamma$ of $\Delta$, $\dom{\Gamma}{B}$ is
	not Ramanujan.
\end{thm}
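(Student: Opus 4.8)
The plan is to recast the Ramanujan property as a statement about the spherical automorphic spectrum and then to locate a non-tempered, non-trivial point of that spectrum coming from the cusps. First I would reduce to the case $\Gamma=\Delta$. Pulling functions back along the projection $\dom{\Gamma}{B}\to\dom{\Delta}{B}$ gives a Hecke-equivariant embedding $L^2(\dom{\Delta}{B})\hookrightarrow L^2(\dom{\Gamma}{B})$, isometric up to the constant factor $[\Delta:\Gamma]$ coming from the covolume weights; consequently the joint spectrum of the $d-1$ Hecke operators on $\dom{\Delta}{B}$ is contained in that on $\dom{\Gamma}{B}$, and it suffices to treat $\Delta$ itself. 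I would then identify $L^2$ of the vertex set of $\dom{\Delta}{B}$ with the $K$-fixed vectors in $L^2(\dom{\Delta}{G})$, where $K=\PGL_d(\F_q[[t]])$ is a vertex stabilizer, so that the Hecke operators become the spherical Hecke algebra and being Ramanujan becomes the assertion that every infinite-dimensional spherical constituent of $L^2(\dom{\Delta}{G})$ is tempered.

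The essential input is that $\Delta$ is non-uniform, so $\dom{\Delta}{G}$ is non-compact of finite volume and its regular representation acquires a continuous part. Invoking the Langlands spectral decomposition over $\F_q(t)$, I would write $L^2(\dom{\Delta}{G})$ as a discrete spectrum plus a direct integral of representations parabolically induced from the discrete spectra of Levi subgroups. The cuspidal constituents are tempered by Lafforgue's theorem, and since $\BP^1$ carries no non-trivial everywhere-unramified local system the only everywhere-unramified discrete representation of $\PGL_d$ itself is the trivial one; hence the discrete spherical spectrum is harmless and any failure of temperedness must sit in the continuous spectrum attached to a \emph{proper} parabolic. This is precisely where the hypothesis $d>2$ enters.

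Concretely, let $P$ be a maximal parabolic of $\PGL_d$ whose Levi factor $M$ contains a $\GL_{d-1}$-block, and let $\mathbf 1_M$ denote the trivial representation of $M$. Because $\dom{M(\F_q(t))}{M(\A)}$ has finite volume, $\mathbf 1_M$ lies in the discrete spectrum of $M$, so the induced representations $\Ind_P^G(\mathbf 1_M\otimes\nu)$, with $\nu$ running over the unitary unramified characters of the split center of $M$, occur in the continuous spectrum of $L^2(\dom{\Delta}{G})$. Each is unramified, hence has a non-zero $K$-fixed vector and contributes to the building spectrum; and inducing the \emph{non-tempered} trivial representation of the $\GL_{d-1}$-block contributes the Satake parameters $q^{(d-2)/2},q^{(d-4)/2},\dots,q^{-(d-2)/2}$, which lie off the unit circle. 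The associated Hecke eigenvalues therefore form a whole band lying strictly outside the tempered region $\Sigma_{\mathrm{temp}}$ (the spectrum of the Hecke operators on $B$ itself), and, being a continuous family parametrized by $\nu$, this band is genuinely distinct from the excluded trivial eigenvalue. Thus $\dom{\Delta}{B}$, and hence $\dom{\Gamma}{B}$, is not Ramanujan. For $d=2$ the only proper parabolic is the Borel, whose Levi is a torus, and inducing a unitary character of a torus yields only tempered (unitary principal series) continuous spectrum — which explains why both the argument and the statement require $d>2$.

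The main obstacle I anticipate is making the correspondence between the spectrum of the Hecke operators on the non-compact complex $\dom{\Delta}{B}$ and the automorphic decomposition fully rigorous on its continuous part: one must justify, via the theory of Eisenstein series over the function field, that $\Ind_P^G(\mathbf 1_M\otimes\nu)$ really appears, that its spherical vector is non-zero for an open set of $\nu$, and that the resulting Satake parameters sweep out an interval of Hecke eigenvalues disjoint from $\Sigma_{\mathrm{temp}}$. A hands-on alternative, avoiding the full spectral theorem, is to build truncated pseudo-Eisenstein series from $\mathbf 1_M$ supported deep in the sector exhibited by the fundamental domain determined earlier, and to check directly that they form a Weyl sequence for the Hecke operators with approximate eigenvalues in the non-tempered band; this is the step in which the explicit structure of the cusp is used.
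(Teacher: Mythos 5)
The paper itself contains no proof of this theorem: it is quoted from Samuels \cite{Sam}, with only the one-line indication that she ``considered spherical representations and showed that in the non-uniform case they are not tempered.'' Your proposal reconstructs essentially that same strategy --- reducing to $\Delta$, translating the Ramanujan property into temperedness of the infinite-dimensional spherical constituents of $L^2(\dom{\Delta}{G})$, and exhibiting the non-tempered unramified representations $\Ind_P^G(\mathbf 1_M\otimes\nu)$ induced from the trivial representation of a $\GL_{d-1}$-Levi, which are non-tempered exactly when $d>2$ --- and the one point you rightly flag as delicate (that this piece of the continuous spectrum genuinely enters the Hecke spectrum of the quotient complex) is precisely the technical content of the cited reference rather than a gap in your outline.
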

 Samuels considered spherical representations and showed that in the non-uniform case they are not tempered.
The work of Samuels was continued by Orit Sela in her Ph.D. thesis from 2012, \cite{Se}, where she gave a complete description of the fundamental domain and showed that it was not compact, but was of finite volume, which she calculated. This paper is reporting on those results. In the meantime, Hong and Kwon \cite{HK} have published the calculation for the case $d = 3$.

In this paper, we investigate the properties of these quotients of nonuniform lattices and give tools for constructing them. The quotient is not  compact, but it is of finite volume and we will give the formulae for finding the volume.

\section{Buildings} \label{sec:4}

We presume basic familiarity with simplicial complexes and posets. Assume we are given  a poset in which every two elements have a lower bound and for which the set of all elements less than or equal to some element $x$ is isomorphic to the posets of subsets of some finite set under inclusion. Then there is a {\bf 
	canonical simplicial complex} in which the simplicies are linearly ordered subsets. We also assume that the definition of the fundamental group of a simplicial complex is known.

\subsection{Coxeter simplicial complexes}
Let $I$ be a index set and $m$ a map
$$m : I \times I \to \mathbb{Z} \cup \{\infty\}$$
with $m(i,j)\geqslant 2$ when $i \neq j$ and $m(i,i) = 1$.
Let $M$ is the matrix $[m_{ij}]$ where $(m_{ij}=m(i,j))$. The {\bf Coxeter group $(W,S)$ of type $M$} has generating set  $S=\{r_i \mid i \in I\}$ and can be defined by the presentation $W = \langle r_{i} | r_{i}^{2} = (r_{i}r_{j})^{m_{ij}} = 1$ for all $i,j \in I \rangle$.  We assume it to be known that every word is equivalent to a reduced word containing no squares of any generator and that all reduced words equivalent to the same word have the same lengths.
For every $J  \subseteq I$, the subgroup $W_{J} = \langle r_{j} | j \in J \rangle$ is called `standard parabolic subgroup' \cite{Br}\cite{Hum}. 
Reverse inclusion makes the set of standard cosets into a poset satisfying the conditions listed above, and the canonical simplicial complex is called the Tits cone. The Tits cone  has a $0$-simplex given by the standard coset $W_I=W$, and every maximal dimension simplex contains this $0$-complex. We now remove the vertex of the Tits cone, which is to say, we exclude  $W_I$ from the set of cosets. 
The $0$-simplices will be standard cosets $wW_J$ where $J$ has one less element than $I$.

 If $W$ is finite, the complex has dimension
$|S|-1$ and is homeomorphic to a sphere of that dimension.  If $W$ in infinite, the Coxeter complex is contractible. 

\subsection{Bruhat-Tits buildings}

A Bruhat-Tits building is a type of simplicial complex with a high degree of symmetry,
 but we will need to establish more notation in order to define it exactly. We begin with a fixed Coxeter group $(W,S)$. The building will contain subcomplexes called apartments which will be isomorphic to the Coxeter simplex of $(W,S)$. 

The maximal simplices are called  chambers, and two chambers are called adjacent if they have in common a maximal face, also called a {\bf wall}. Within any apartment, there is an action of the Coxeter group $W$ on the simplicial complex, and the two chambers which can be mapped one into the other by $r_i$ are called $i$-adjacent, or neighbors of type $i$.
Two chambers are neighbors of type $i \in I$ if they share a common face of type $I \setminus \set{i}$.
Hence, every element $i \in I$ generates an equivalence relation, in which two chambers are equivalent if there is a set of $i$-adjacent chambers connecting one to the other.

\begin{defn}
	Let $f = i_{1}i_{2} \dots i_{k}$ be a word in the free monoid on $I$, and $r_{f} = r_{i_{1}}r_{i_{2}} \cdots r_{i_{k}} \in W$ be the corresponding element of the Coxeter group.  In a simplicial complex $C$ with an action of a Coxeter group $W$,  a \textbf{gallery} is a finite sequence of chambers $(c_{0}, c_{1}, \dots,c_{k})$ such that $c_{j-1}$ is adjacent to $c_{j}$ for every $1 \le j \le k$. The \textbf{type} of the gallery is said to be $i_{1}i_{2} \dots i_{k}$ if $c_{j-1}$ is $i_{j}$-adjacent to $c_{j}$.
\end{defn}

\begin{exmpl}
The Coxeter group $W$ can be thought of as a trivial simplicial complex with the elements as chambers and  $W$ acting by left multiplication. Then the elements $x,y \in W$ give a gallery $(x,y)$ of type~$f$ if $x^{-1}y = r_{f}$.
\end{exmpl}
\begin{defn}
	For any $J \subset I$,
	The {\bf $J$-residue components} of $C$ are sets of chambers, in which every two chambers of a component are connected with a \textbf{gallery} of $j$-adjacent chambers, where $j \in J$. A $J$-residue component is said to have a {\bf rank } $|J|$ and {\bf corank} $|I|-|J|$. 
\end{defn}
These components define a cell complex of $C$, associating to a residue of co-rank $1$ a vertex ($0$-simplex), to a residue of co-rank $2$ an edge ($1$-simplex) and to residue of co-rank $n$ a $(n-1)$-simplex. The $(|I|-2)$-simplices are called {\bf{panels}}.
\begin{defn}\label{build}

    A {\bf{building for a Coxeter group $W$ of type $M$}} is a simplicial complex $X$  together with a set $\mathcal F$ of Coxeter subcomplexes called the {\bf apartments} of the building, of which $X$ is the union. We denote the pair $(X, \mathcal F)$ by $B$ and require it to satisfy 
	\begin{enumerate}
		\item  There is a function $\delta : B \times B \to W$, such that $\delta(x,y) = r_{f}$ iff $f$ is a reduced word and $x$ and $y$ have a gallery of type $f$. 
		\item Each panel of the chamber system lies on at least two chambers.
	\end{enumerate}
\end{defn}

\begin{defn}
	A simplicial complex $C$ is simply connected if it is connected as a simplicial complex, and for all vertices $c \in C$, the fundamental group $\pi_{1}(C)$ is trivial.
\end{defn}

\begin{defn}
	A simply connected building will be called a {\bf Bruhat-Tits building}.
\end{defn}
\begin{exmpl} Let $F$ be a field and $V$ a vector space over $F$ of dimension $n+1$.  We let $(W,S)$ be the symmetric group permuting $1,2,\dots,n$, with generating set the transpositions $(j, j+1)$. We define a simplicial complex $X$ in which the simplices are chains $V_{i_1} \subset V_{i_2} \subset \dots \subset V_{i_k}$ of subspaces, in which each $V_{i_j}$ is generated by $i_j$ independent elements and is thus of dimension $i_j$.   Chains $V_{1} \subset V_{2} \subset \dots \subset V_{n}$ of subspaces, where $\dim(V_i) = i$, are called maximal flags.
	 
	The maximal flags will be the chambers of the building, $B$, where two maximal sequences, $V_{1} \subset V_{2} \subset \dots \subset V_{n}$ and $V_{1}' \subset V_{2}' \subset \dots \subset V_{n}'$ are $i$-adjacent if $V_{j} = V_{j}'$ for all $j \ne i$. So, the index set $I$ is $\{1, 2, \dots, n \}$. The shared $n-1$ simplex, which is a face of type $I \setminus \{i\}$, is the sequence $V_{1} \subset V_{2} \subset \dots \subset V_{i-1} \subset V_{i+1} \subset \dots \subset V_{n}$.
\end{exmpl}

\subsection{Buildings of type $\tilde{A}_{d}$}

The affine buildings of type $\tilde{A}_{d}$ are buildings, for which the associated Weyl group $W$ has a Dynkin diagram of type $\tilde{A}_d$: a single cycle with $d+1$ vertices. We now give an explicit construction of such buildings.

We will give the construction in positive characteristics, although a similar one can be given in characteristic zero as well. Let $F$ be a local field of characteristic $p$, which we choose to be $F=\F_q((1/t))$, and let $v : F \to \Z$ be  its valuation. Assume $\frac{1}{t}$ is a uniformizer. Let $\mathcal O = \set{x \in F \mid v(x) \geqslant 0}$, and assume the residue field is $\mathcal O/\frac{1}{t}\mathcal O = \mathbb{F}_{q}$, a field with $q = p^{n}$ elements. For each $a \in F^{\times}$, $a \mathcal{O} = (\frac{1}{t})^{v(a)}\mathcal{O}$.

Let $V$ be a vector space over $F$ of dimension $d$. A lattice $L$ of $V$ is any finitely-generated $\mathcal O$-submodule of $V$, which generates $V$ over $F$. Two lattices $L,L'$ are equivalent if $L' = aL$ for some $a \in F^{\times}$. The equivalence class of lattice $L$ is denoted by $[L]$. These equivalence classes will be the vertices of the simplicial complex we are going to build.

The chambers of the building are the sets of vertices corresponding to the equivalence classes of the lattices of the maximal complete sequences $\frac{1}{t}L_{0} \subset L_{k} \subset \dots \subset L_{1} \subset L_{0}$ (that is, the complete flags in the quotient $L_{0}/\frac{1}{t}L_{0}$). Hence, the number of vertices in each chamber is $d$, and two chambers are $i$-adjacent if the associated complete sequences differ in only one lattice, $L_{i}$. For a given choice of equivalence class $[L_0]$ and a choice of standard basis, there are $d!$ different chambers
\[
\frac{1}{t}L_{0} \subset L_{k} \subset \dots \subset L_{1} \subset L_{0},
\]
depending on the order in which the $e_j$ are multiplied by the uniformizer $\frac{1}{t}$, and they make up the orbit of a finite subgroup $\mathring{W}$ of the infinite Weyl group $W$.

The $J$-residue components are the sets of $d - |J| + 1$ vertices, which correspond to the equivalence classes of the lattices of the sequences of order $d - |J| + 1$.
Now, define $B^{i}$ to be the set of vertices associated to the equivalence classes of the lattices $L_{j}, 0 \le j \le i$, which can be ordered in a sequence $\frac{1}{t}L_{0} \subset L_{i} \subset \dots \subset L_{1} \subset L_{0}$. So, $B^{0}$ is the set of vertices, such that every vertex $x$ corresponds to an equivalence class $[L]$; and $B^{1}$ is a set of edges between the vertices of $B^{0}$, such that there is an edge between vertices $[L], [L']$ of $B^{0}$ if there are a lattice $L_{0} \in [L]$ and a lattice $L_{1} \in [L']$ satisfy $\frac{1}{t}L_{0} \subset L_{1}\subset L_{0}$ (The edge is not ordered because if $\frac{1}{t}L_{0} \subseteq L_{1} \subseteq L_{0}$ then $\frac{1}{t}L_{1} \subseteq  \frac{1}{t}L_{0} \subseteq L_{1}$).

By the definition above, the sequences define a complex (which is a building because it is simply connected) and each $B^{i}$ is the set of $i$-simplices of the complex.

The connection to groups stems from the following fact. The group $\GL_{d}(F)$ acts transitively on the lattices of $F^{d}$ and preserves equivalence and inclusion between lattices. The stabilizer of the standard lattice, $L_{0} = {\mathcal O}^{d}$, is the maximal compact subgroup $\GL_{d}(\mathcal O)$; hence, the stabilizer of $L_{0}$ under the induced action of $\PGL_{d}(F)$ is  $\PGL_{d}(\mathcal O)$, which is a maximal compact subgroup by itself.

\begin{exmpl}\label{chambers}
   Let $d=3$, and let $L_0$ be the lattice spanned by $e_1,e_2,e_3$.	Let $c$ be the chamber 
   $\frac{1}{t}L_0 \subset L_2 \subset L_1 \subset L_0$ with 
   \[
   L_1 = \span_{\mathcal O}\{\frac{1}{t}e_1,e_2,e_3\}
   \] 
   \[
   L_2 = \span_{\mathcal O}\{\frac{1}{t}e_1,\frac{1}{t}e_2, e_3\}  
   \] 
   The $1$-adjacent chamber of $c$ would have $L_1$ replaced by $L_1'=
    \span_{\mathcal O}\{e_1,\frac{1}{t}e_2,e_3\}$. The $2$-adjacent chamber would have 
    $L_2$ replaced by $L_2'=\span_{\mathcal O}\{\frac{1}{t}e_1,e_2, \frac{1}{t}e_3\}$. Acting alternately by these two operators, we get all six chambers in the apartment.
\end{exmpl} 
\subsection{Colorings}

 Let $c: \frac{1}{t}L_{0} \subset L_{d-1} \subset \dots \subset L_{0}$ be a chamber, with $L_{0} = \span_{\mathcal O}\{e_{1}, \dots, e_{d}\}$, then $L_{1}$ is $\span_{\mathcal O}\{e_{1}, e_{2}, \dots, \frac{1}{t}e_{i_{1}}, \dots, e_{d}\}$ for some $1 \le i_{1} \le d$ and
$L_{k} = \span_{O}\{e_{1}, \dots , \frac{1}{t}e_{i_{1}},\dots, \frac{1}{t}e_{i_{2}}, \dots ,\frac{1}{t}e_{i_{k}}, \dots, e_{d}\}$, $1 \le i_{1}, \dots, i_{k} \le d$ where $i_{j}$ are different in pairs.
For every lattice $L_{k}$ in the flag, there is a diagonal matrix $g \in \GL_{d}(F)$ such that $gL_{0} = L_k$, and\
\[
g_{ij} = \left\{ \begin{array}{cc}
	1 & \textrm{if } i=j \textrm{ and }  j\not\in \{i_1,i_2, \dots,i_k\}\\	\frac{1}{t} & \textrm{if  } i=j \textrm{ and }  j  \in \{i_1,i_2, \dots,i_k\}\\
	0 & \textrm{otherwise}
\end{array}\right.
\]

\begin{defn}
Let $x=[L]$, $L=gL_0$. The color function of the vertex $x$ is defined to be: $$CL_V(x)=\nu(\det(g))\pmod{d}.$$ 
\end{defn}

	 Let $c: \frac{1}{t}L_{0} \subset L_{d-1} \subset \dots \subset L_{0}$ be a chamber  and
	$L_{k}$ a lattice that belongs to the chamber's chain with correspondent diagonal matrix $g \in \GL_{d}(F)$. The {\bf color function} of a vertex $x \in B^{0}$, which corresponds to a lattice $L_{k}$ is: 
\[	
CL_{V}(x) = \nu(\det(g))\pmod{d} =k.
\]	

\begin{rem}
	The $d$ vertices of the chamber $c$ are of $d$ different colors ($k=0,1,2,\ldots,d-1$).
\end{rem}

\begin{exmpl}\label{coloring}
	Let $d=3$ and let $c$ be the chamber in Example \ref{chambers}. Set 
	\[
	x_0=[L_0],x_1=[L_1], x_2=[L_2]
	\]

	Then 
\[	
CL_V(x_0)=0,\quad CL_V(x_1)=1,\quad CL_V(x_2)=2.
\]
\end{exmpl}
Not only the vertices of the building $B^{0}$ can be colored.  The color of a vertex is defined as above by the place of the vertex in the chamber it belongs to (as the index of the position $[\frac{1}{t^i}L_{0}]$) and the color of an edge between two vertices is defined as the difference of the two colors (modulo $d$).
\begin{defn}
The {\bf color function} of edges 
\[
CL_{E}: B^{0} \times B^{0} \to \Z/d\Z
\] 
is defined to be 
\[
CL_{E}(x,x') = i
\] if 
$\frac{1}{t}L' \subset L \subset L'$ and  \
$[L' : L] =  \frac{1}{t^i}$,
where $L \in [L] = x$ and $L' \in [L'] = x'$. 
\end{defn}
\begin{rem}
	\quad{}
	\begin{enumerate}
		\item
	The number of different colors of edges is $d$ and the edge between $x$ and $x'$ is said to be of color $i$ if $CL_E(x,x')=i$. \\
		\item Denote by $B[i]$ the set of edges of $B$ of color $i$. By definition of colors, if $([L],[L'])$ $\in B[i]$ then $([L'],[L])$ $\in B[d-i]$.
	\end{enumerate}
\end{rem}

The connection between two vertices $L$ and $L'$ with $CL_{E}([L], [L']) = i$ finds expression in the basis of the vertices as a $\mathcal O$-modules.
\begin{exmpl}\label{coloring}
	Let $d=3$ and let $c$ be the chamber in Example \ref{chambers}. Set 
	\[
	x_0=[L_0], \quad x_1=[L_1],\quad x_2=[L_2]
	\]
	Then 
	\[
	CL_E(x_1,x_0)=1,\quad CL_E(x_2,x_0)=2,\quad CL_E(x_2,x_1)=1.
	\]
\end{exmpl}

\begin{rem}
	\quad{}
	\begin{enumerate}
		\item
		If $CL_{E}(x,x') = i$ then there is a path of $i$ edges of color $1$ between $x$ and $x'$.\\
		\item The action of the group $\SL_{d}(F)$ on the building preserves colors.
	\end{enumerate}
\end{rem}

\subsection{Lattices in locally compact groups}

A topological group is {\bf locally compact}
if every non-trivial element belongs to a compact
neigborhood of the identity. This family is preserved
under quotients, i.e., the quotient of a locally compact group is locally compact.

One of the important properties of a locally compact
group $G$ is the fact that it has a Haar measure,
uniquely defined up to multiplication by a scalar, which we will denote here by
$\mu$. The existence of the measure allows to define
the integral of functions on $G$.
We say that the measure is left translation invariant
if for every Borel subset $S \subseteq G$ and for every $a \in G$, $\mu(aS) = \mu(S)$. 
Since the measure is left invariant, namely $\mu(aS)=\mu(S)$ for every $a \in G$ and a Borel subset $S \sub G$ we have for every Borel function $f$ and every $a \in G$ that  $\int_{G}f(ax)d\mu(x) = \int_{G}f(x)d\mu(x)$.

An important example of locally compact groups are the linear groups over archimedean ($\mathbb{R}$ or $\mathbb{C}$) or non-archimedean local field, $F$. The measure is unique up to multiplication by scalar. Let $K$ be a maximal compact subgroup of $G$, then one can normalize the measure such that $\mu(K)=1$.

A lattice, $\Gamma$, of $G$ is a discrete subgroup of finite co-measure, i.e., $\mu(G / \Gamma)< \infty$. There are two kinds of lattices, those for which $G/\Gamma$ is compact and those for which it is not. The former are called uniform lattices and the latter non-uniform lattices.

Define $G = \PGL_{d}(F)$ and $K = \PGL_{d}(\mathcal{O})$. The vertices of the associated building are $B^{0} = G/K$. The measure of all the cosets is the same, because of the invariance of the measure to multiplication by a group element. In particular, the measure of a union of cosets of $K$ is the number of disjoint cosets, so the measure of a subset of $B^{0}$ is the number of the vertices. The measure of a subgroup $A$ containing $K$ is the index $[A:K]$.

Let $B$ be a building of type $\tilde{A}_{n}$ associated to the
local field $\F_{q}\db{\frac{1}{t}}$ An example of a non-uniform lattice is obtained by taking
the lattice to be $\PGL_{d}(\mathbb{F}_{q}[t])$, where the group
is $G=\PGL_{d}(\mathbb{F}_q \db{\frac{1}{t}})$.

In this work,  we use the non-uniform lattice
$\Gamma = \PGL_{d}(\mathbb{F}_{q}[t])$ of the group
$G=\PGL_{d}(\mathbb{F}_{q}\db{ \frac{1}{t}})$ to
describe the fundamental domain of the building $B$
under the action of $\Gamma$.

\section{A quotient of Bruhat-Tits building by a non-uniform lattice} \label{sec:3}

In \cite{Ser2}, Serre gave a very detailed description of the
Bruhat-Tits building of type $\tilde{A}_1$ over a local field $F$,
which is the $(q+1)$-regular tree. A significant portion in this
description is devoted to the quotient with respect to the
non-uniform lattice $\PGL_{2}(\mathcal{O})$, where $\mathcal O$ is the valuation
ring.

We now generalize this description to arbitrary $d$,
giving the  structure of the complex from several viewpoints. We study
the non-uniform lattice $\PGL_{d}(\mathcal{O})$ in detail, with particular
emphasis on the case $d = 3$, which serves as an illustration to
the new phenomenon occurring in higher dimension.

In order to study the quotient, we will find a subset of the building which is a fundamental domain. The following two conditions are needed for proving that an open connected subset $X'$ of the building $B$ is the fundamental domain of $B$ under the action of a group $\Gamma$:
\begin{enumerate}
	\item $B = \bigcup_{\gamma \in \Gamma} \gamma(X')$,
	\item  If $S$ is a simplex in $T$ and then for any $\gamma \in \Gamma$, either $\gamma(S) \bigcap S = \phi$ or $S$ and $\gamma S$ coincide.
\end{enumerate}

\subsection{The distinguished subset $T$ of $B$}

Let $\F_q$ be a finite field, and $F =
\mathbb{F}_{q}\db{\frac{1}{t}}$ the field of Laurant series in the
variable $\frac{1}{t}$. This is a local field, with uniformizer
$\frac{1}{t}$ and valuation ring $\mathcal{O} = \mathbb{F}_{q}[[\frac
{1}{t}]]$. As before, let $K = \PGL_{d}(\mathcal{O})$ be a maximal compact
subgroup of $G = \PGL_{d}(F)$;  $\Gamma =
\PGL_{d}(\mathbb{F}_{q}[t])$ is a non-uniform lattice, and we
point out that $\Gamma\cap K = \PGL_d(\F_q)$, since $\mathbb{F}_{q}[t] \cap \mathcal{O} = \mathbb{F}_{q}$. 

The group $G$ acts on the space of lattices in $F^d$ (which are
$\mathcal O$-submodules containing bases of $F^d$). One can define the
Bruhat-Tits building associated to $\PGL_{d}(F)$ by identifying
the set of vertices $B^{0}$ with the coset space $G/K$. By the
notation $B[i]$, we mean an edge of color $i$.

We will define a distinguished set $T$ of vertices of $B^{0}$ by special properties of their basis
as $\mathcal O$-modules. Later, we will prove that this set $T$ is  a fundamental
domain of B, isomorphic to  $\dom{\Gamma}{B}$, and this gives a set of representatives for the elements of the quotient we are trying to calculate. 
\begin{defn}
The sequence
$\bar{n} = (n_{1}, n_{2}, \cdots, n_{d-1}, 0)$, where \\
$n_{1} \ge n_{2} \ge
\cdots \ge n_{d} = 0$, will be denoted by $\bar{n}$ and it defines
the lattice $l_{\bar{n}}$ and the matrix $L_{\bar{n}}$:
\[
l_{\bar{n}} =                     \sum_{i=1}^{d}t^{n_{i}}\mathcal O e_{i}
\]
where
 $\{e_{i}\}_{i=1}^{d}$ is the standard basis of the columns space
of $F^d$. 
\\
\[
(L_{\bar{n}})_{ij} = 
\left\{ \begin{array}{cc} t^{n_{i}}\mathcal O & \textrm{ if } i=j\\ 0 & \textrm{ otherwise } \\	
\end{array}\right.	
\]
\color{black}

The set of all such vertices in the building $B^0$ will be 
denoted by $T$ and the set of all such sequence $\bar{n}$, will be denoted by $N_T^d$.
\end{defn}

\begin{rem} From module theory of principal ideal domains we know:
	\quad{}
	\begin{enumerate}
		\item If $L$ is a diagonal matrix with columns $\{ t^{n_{i}}e_{i}\}$ and $$\bar{n}=(n_1,n_2, \cdots, n_{d-1},0),\quad n_i \in \mathbb{N} \cup \{0 \},\quad 1 \leq i \leq {d-1}$$,
		then it equivalent with respect to elementary row and columns operations over $\mathcal O$ to a diagonal matrix
		with columns $\{t^{n_{i_{j}}}e_{j}\}$ such that $n_{i_{j+1}} \le
		n_{i_{j}}$ for every $1 \le j \le d-1$ (which means that $(n_{i_1},n_{i_2}, \cdots, n_{i_{d-1}}, 0)$ is a permutation of the $d-1$ elements of $(n_1,n_2,\cdots,n_{d-1},0)$).
		\item
		Every diagonal lattice can be written as $AL_{\bar{n}}A'$, where
		$A,A'$ are in $\PGL_{d}(\mathcal O)$ and $\bar{n} \in N^d_T$.
	\end{enumerate}
\end{rem}

\begin{defn}
Two matrices $L_{\overline{m}}$ and $L_{\overline{n}}$ are equivalent if there exist two matrices $A,A' \in {\PGL_{d}(\mathcal
{O})}$ such that \\ 
\[
L_{\overline{m}}={AL_{\bar{n}}A'}. 
\] \\
We will denote the set of all the matrices, which are 
equivalent to $L_{\bar{n}}$ by $\Lambda_{\bar{n}}=[L_{\bar{n}}]$.
\end{defn}

\begin{defn}
  An oriented edge of color $i$ from $\Lambda_{\bar{m}}$ to $\Lambda_{\bar{n}}$ exists if $\bar{m}=\bar{n} + \bar{v}$, where $\bar{v}=(v_{1}, v_{2}, \cdots, v_{d-1}, 0)$ has $(-1)$'s in $i$ places and $0$'s in $d-i$ places.
\end{defn}
\begin{cor}\label{neighbors}

An oriented edge from vertex
$\Lambda_{\bar{m}}$ to vertex
$\Lambda_{\bar{n}}$, where $\bar{m}=\bar{n} + \bar{v}$, has color $1$ if there is a path of lattices
of length $1$ from $\Lambda_{\bar{m}}$ to
$\Lambda_{\bar{n}}$, 
\[
\Lambda_{\bar{m}} = \Lambda_{1} \subset
\Lambda_{0} = \Lambda_{\bar{n}}
\]
and there is a path of lattices
of length $(d-1)$ from $\frac{1}{t}\Lambda_{\bar{n}}$ to
$\Lambda_{\bar{m}}$, 
\[
\frac{1}{t}\Lambda_{\bar{n}} = \Lambda_{d} \subset
\Lambda_{d-1} \subset \dots \subset \Lambda_{1} = \Lambda_{\bar{m}}
\]
such that
for every $0 \le i \le d$, there is  $L_{\bar{m}} \in \Lambda_{\bar{m}}$ and there is $L_{\bar{n}} \in \Lambda_{\bar{n}}$, such that 
$[L_{\bar{m}} : L_{\bar{n}}] =  \frac{1}{t}$. \\

An oriented edge from vertex
$\Lambda_{\bar{m}}$ to vertex
$\Lambda_{\bar{n}}$, where $\bar{m}=\bar{n} + \bar{v}$, has color $i$ if there is a path of lattices
of length $i$ from $\Lambda_{\bar{m}}$ to
$\Lambda_{\bar{n}}$, 

\[
\Lambda_{\bar{m}} = \Lambda_{i} \subset
\Lambda_{i-1} \subset \dots \subset \Lambda_{0} = \Lambda_{\bar{n}}
\]
and there is a path of lattices
of length $(d-i)$ from $\frac{1}{t}\Lambda_{\bar{n}}$ to
$\Lambda_{\bar{m}}$, 

\[
\frac{1}{t}\Lambda_{\bar{n}} = \Lambda_{d} \subset
\Lambda_{d-1} \subset \dots \subset \Lambda_{i} = \Lambda_{\bar{m}}
\]
such that
for every $0 \le i \le d$ there is an oriented edge of color $1$
between $\Lambda_{i+1}$ and $\Lambda_{i}$.
\end{cor}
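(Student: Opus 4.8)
The plan is to reduce the statement to the explicit lattice computation that underlies the definition of the edge-color function $CL_E$, and then to exhibit the required chains by decreasing the exponents of a diagonal representative one coordinate at a time. First I would fix the diagonal representatives $L_{\bar m} = \sum_{j=1}^d t^{m_j}\mathcal{O} e_j$ and $L_{\bar n} = \sum_{j=1}^d t^{n_j}\mathcal{O} e_j$, and record the elementary fact that, since $\frac{1}{t}$ is the uniformizer, $t^{n-1}\mathcal{O} = \frac{1}{t}\,t^{n}\mathcal{O} \subset t^{n}\mathcal{O}$ with one-dimensional quotient over the residue field $\mathbb{F}_q$. Writing $\bar m = \bar n + \bar v$ with $\bar v$ carrying $-1$ in an index set $S$ of size $i$ and $0$ elsewhere, this immediately gives the nesting $\frac{1}{t}L_{\bar n} \subset L_{\bar m} \subset L_{\bar n}$ together with $\dim_{\mathbb{F}_q}(L_{\bar n}/L_{\bar m}) = i$, i.e. $[L_{\bar n} : L_{\bar m}] = \frac{1}{t^i}$; by the definition of $CL_E$ this is exactly the assertion that the oriented edge $\Lambda_{\bar m} \to \Lambda_{\bar n}$ has color $i$.

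Next I would build the two galleries of color-$1$ edges. Enumerate $S = \{j_1, \dots, j_i\}$ and its complement $I\setminus S = \{j_{i+1}, \dots, j_d\}$, and for $0 \le k \le d$ let $\Lambda_k$ be the class of the diagonal lattice obtained from $L_{\bar n}$ by replacing $t^{n_{j_\ell}}\mathcal{O}$ with $t^{n_{j_\ell}-1}\mathcal{O}$ for every $\ell \le k$. Then $\Lambda_0 = \Lambda_{\bar n}$, $\Lambda_i = \Lambda_{\bar m}$, and $\Lambda_d = \frac{1}{t}\Lambda_{\bar n}$, since decreasing all $d$ exponents by $1$ is the same as scaling by $\frac{1}{t}$. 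Each single step $\Lambda_{k} \subset \Lambda_{k-1}$ changes exactly one exponent by $-1$, so by the $i=1$ instance of the previous paragraph it is an oriented edge of color $1$. The first $i$ steps give the chain $\Lambda_{\bar m} = \Lambda_i \subset \cdots \subset \Lambda_0 = \Lambda_{\bar n}$, and the last $d-i$ steps give $\frac{1}{t}\Lambda_{\bar n} = \Lambda_d \subset \cdots \subset \Lambda_i = \Lambda_{\bar m}$, which are precisely the two paths demanded in the statement.

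Finally I would note that the concatenation $\Lambda_0 \supset \Lambda_1 \supset \cdots \supset \Lambda_d$ is a complete flag in $L_{\bar n}/\frac{1}{t}L_{\bar n}$, hence a chamber containing both $\Lambda_{\bar n}$ and $\Lambda_{\bar m}$, with $\Lambda_{\bar m}$ sitting at position $i$ along color-$1$ edges; combined with the earlier remark that $CL_E(x,x')=i$ forces a path of $i$ color-$1$ edges, this pins the color of the edge at $i$ and completes the argument.

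The step I expect to be the main obstacle is the bookkeeping needed to keep the argument inside the normal-form set $N_T^d$: after decreasing exponents the sequence need not remain weakly decreasing, so strictly speaking each intermediate $\Lambda_k$ must be re-sorted into $N_T^d$ form. I would handle this by invoking the remark that every diagonal lattice is $\PGL_d(\mathcal{O})$-equivalent to a uniquely sorted $L_{\bar n}$ with $\bar n \in N_T^d$, and by observing that permuting coordinates is realized by a permutation matrix in $\PGL_d(\mathcal{O})$, so it preserves the equivalence class together with all the inclusions and indices used above; thus the color computation is unaffected by the reordering.
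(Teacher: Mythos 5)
The paper states this result as an immediate corollary of the definitions of the edge--color function $CL_E$ and of oriented edges between the $\Lambda_{\bar n}$, and offers no proof of its own; your argument supplies exactly the details those definitions intend, namely computing $[L_{\bar n}:L_{\bar m}]=\frac{1}{t^{i}}$ for diagonal representatives and refining $\frac{1}{t}L_{\bar n}\subset L_{\bar m}\subset L_{\bar n}$ to a complete flag by lowering one exponent at a time, so each step is a colour-$1$ edge. Your closing observation that intermediate classes must be re-normalized by permutation matrices in $\PGL_{d}(\mathcal{O})$ (and, one should add, by rescaling by a power of $t$ so that the last exponent is again $0$) correctly handles the only bookkeeping subtlety, and the proposal is correct and consistent with the paper.
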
 
\begin{exmpl}
	Assume $d = 3$. Denote by $n_{1}n_{2}n_{3}$ the equivalence class of $L_{ \bar{n}}$, where $\bar{n} = (n_{1}, n_{2}, n_{3})$. 
 The edges of color $1$, $( \Lambda_{\bar{n}}, \Lambda_{\bar{m}})$, are the diagonal edges from the right up vertex to the left down vertex or vertical edges from down to up or horizontal edges from left to right.\\
 The edges of color $2$, $( \Lambda_{\bar{n}}, \Lambda_{\bar{m}})$, are the diagonal edges from the left down vertex to the right up vertex or vertical edges from up to down or horizontal edges from right to left.\\

	An oriented edge of color $2$, 
    $\Lambda_{\bar{n}} \stackrel{2}{\longrightarrow} \Lambda_{\bar{m}}$, exists if and only if the reverse edge is of color $1$, $\Lambda_{\bar{m}}\stackrel{1}{\longrightarrow}
	\Lambda_{\bar{n}}$. 
 The left-most corner of $T$ is presented in
	Figure~\ref{fig1}.
	\begin{figure}[!h]
	\begin{displaymath}
		\xymatrix{
			& & & & 440 
              \ar[dl]_{1} \ar[r]_{1} & \dots\\
			& & & 330 \ar[dl]_{1} \ar[r]_{1} & 430 \ar[dl]_{1} \ar[r]_{1} \ar[u]_{1} & \dots \\
			& & 220 \ar[dl]_{1} \ar[r]_{1} & 320 \ar[dl]_{1} \ar[u]_{1} \ar[r]_{1} & 420 \ar[dl]_{1} \ar[u]_{1} \ar[r]_{1} & \dots \\
			& 110 \ar[dl]_{1} \ar[r]_{1}  & 210 \ar[dl]_{1} \ar[u]_{1} \ar[r]_{1} & 310 \ar[dl]_{1} \ar[u]_{1} \ar[r]_{1} & 410 \ar[dl]_{1} \ar[u]_{1} \ar[r]_{1} & \dots \\
			000 \ar[r]_{1} & 100 \ar[r]_{1} \ar[u]_{1} & 200 \ar[r]_{1} \ar[u]_{1} & 300 \ar[r]_{1} \ar[u]_{1} & 400 \ar[r]_{1} \ar[u]_{1} & \dots }
	\end{displaymath}
	\caption{Diagram of $T$}\label{fig1}
\end{figure}
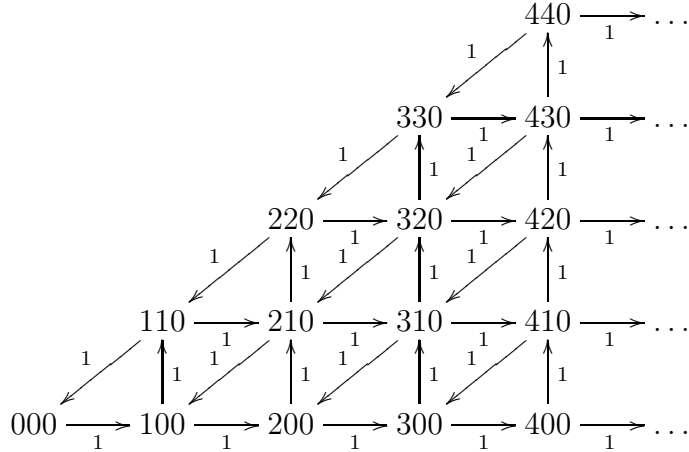

\end{exmpl}

\color{black}
We remark that the analogous graph for $d = 2$, as computed in
\cite{Ser2}, is the one-sided infinite path.

The main step in showing that $T$ is a fundamental domain of
$\Gamma$ is to verify that every element in $B^0$ can be brought to
$T$ by multiplication by elements of $\Gamma$. We prove this by
induction on the distance from $B$, which requires acting not by
the full group $\Gamma$, but by stabilizers of vertices, in order
to preserve the closest vertex which is already in $B$. 

\subsection{The neighbors of a vertex of $B^{0}$}

The vertices which have a common edge of color $i$ with
$\Lambda_{\bar{n}}$ are called neighbors of degree $i$.
The
neighbors of a vertex of $T$ can be in $T$ themselves or outside
of $T$. The number of neighbors depends on the series $\bar{n}$.

In this section, we find the neighbors of every vertex in $T$.
To save space (especially for $d = 3$), we introduce a somewhat
shorter notation $\bar{m}( \bar{n})$: \\
rather than writing
\[ 
\bar{n} = (n_{1}, n_{2}, \dots, n_{d}) 
\] 
we consider the vector   
\[ 
\bar{m}( \bar{n}) = (n_{1}- n_{2}, n_{2} - n_{3}, \dots, n_{d-1} - n_{d})=(m_{1},
m_{2}, \dots, m_{d-1})
\] 
which is called {\bf the sequence of differences of $\bar{n}$} and is a vector of positive integers. This new indexing is useful in dealing with the
stabilizers of a lattice. Of course, if $\bar{m}( \bar{n})$ is
given, then \[n_{i} = \sum_{j=i}^{d-1} m_{j}\] (where $n_{d} = 0$).
We define the integer $|\bar{m}|$ by $|\bar{m}| = | \{ m_{i} : m_{i} \neq 0 \} |$.
Given a vertex $(n_1,\dots,n_d)$ in $T$, if there are $r$ different integers define a sequence $\bar{d}=(d_1 ,d_2, \dots, d_r)$ of $d$ to blocks $d = \sum_{i=1}^{r}d_{i}$, such that for every 
$1 \le i \le r$, $d_{i}$ is the number of equal successive values
$n_j$. For example, if $d = 5$, $r = 3$ and $d_{1} = 2, d_{2} = 1,
d_{3} = 2$, then the block $1$ is $n_{1}, n_{2}$, the block $2$ is
$n_{3}$ and the block $3$ is $n_{4}, n_{5}$, where $n_{1} = n_{2}
> n_{3} > n_{4} = n_{5}$.\\ 
For each $1 \le l \le r$ define
\[
j_{l} = 1+ \sum_{i=1}^{l-1}d_{i} \\
\]
Set $g_{l} = n_{j_{l}}$, then $j_{l}$ is the first index for which $g_{l}$ appears in $\bar{n}$.
\begin{rem}
The set $\bar{n}$ satisfies for every $1 \le j \le d$: \\
\[
n_j = g_l  
\]
where $ 1+ \sum_{i=1}^{l-1}d_{i} \le j \le  \sum_{i=1}^{l}d_{i}$.\\
Moreover, for every $1\le k \le d-1 $:
\[
m_k = \left\{ \begin{array}{ccc}
	g_l -g_{l+1} &  1 + \sum_{i=1}^{l-1}d_{i}
	\le k \le \sum_{i=1}^{l}d_{i} < k+1 \le \sum_{i=1}^{l+1}d_{i}\\	0 & 1 + \sum_{i=1}^{l-1}d_{i}
	\le k,k+1 \le \sum_{i=1}^{l}d_{i}\\
\end{array}\right.
\]     
\end{rem}


\begin{defn}
    Given a vertex $(n_1,\dots,n_d)$ in $T$, with block sequence $\bar{d}=(d_1,d_2,\dots,d_r)$,
$d = \sum_{i=1}^{r}d_{i}$. 
A sequence $\bar{v}$ {\bf preserves $T$} if $\bar{v} = (v_{1}, v_{2},\dots, v_{d})$ with zeros in $d - k$ places and $(-1)$'s in $k$
places, with the condition that if  $0$ appears in
place $j_{0}$ which satisfies
\[
1 + \sum_{m=1}^{i-1}d_{m} \le j_{0} \le \sum_{m=1}^{i}d_{m}
\]
then all
the places $1 + \sum_{m=1}^{i-1}d_{m} \le j < j_{0}$ are also $0$. 

\end{defn}
\begin{lem}
Let $(n_1,\dots,n_d)$ be a vertex in $T$, with block sequence $(d_1,d_2,\dots,d_r)$,
$d = \sum_{i=1}^{r}d_{i}$. 
Let  $\bar{v} = (v_{1}, v_{2},\dots, v_{d})$ be a sequence which preserves $T$, with zeros in $d - k$ places and $(-1)$'s in $k$
places, 
then the neighbors of degree $k$ of the vertex are 
 $[L^{
	\bar{v}}] = \Lambda^{\bar{v}}$ with columns
\begin{displaymath} (L^{ \bar{v}})_{j} = t^{n_{j}+v_{j}}e_{j}
. \end{displaymath}	
where $\{e_j\}, 1\le j\le d$, are the elements of the standard basis of $F^d$.\\
Let the series $\bar{n}$ and $\bar{v}$ be as above with the
condition that if $k = 1$ then $v_{d} = 0$. 
The neighbors of degree $k$ which are not in $T$ are $[L^{ \bar{v}}]$
with columns of the form:
\begin{displaymath} (L^{ \bar{v}})_{ij} = \left\{ \begin{array}{ccc}
		t^{n_{i}+v_{i}}\mathcal{O} & i=j \\
  t^{n_{i}}\mathcal{O} & j=i+1 \\
  0 & \textrm{     otherwise} \\
	\end{array} \right. \end{displaymath}

\end{lem}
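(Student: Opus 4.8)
The plan is to describe the degree-$k$ neighbors of $\Lambda_{\bar n} = [l_{\bar n}]$ as a family of lattices and then normalize each one. By \Cref{neighbors}, a vertex is a degree-$k$ neighbor of $\Lambda_{\bar n}$ exactly when it has a representative lattice $M$ lying one unit interval away from $l_{\bar n}$, say $l_{\bar n} \subseteq M \subseteq \frac 1t l_{\bar n}$ with $[M : l_{\bar n}] = q^k$. Such $M$ correspond bijectively to the $k$-dimensional $\F_q$-subspaces of the residual space $\frac 1t l_{\bar n} / l_{\bar n}$, whose distinguished coordinate basis is $\{ t^{n_j-1}e_j \bmod l_{\bar n} \}_{j=1}^{d}$; hence there are exactly $\binom{d}{k}_{q}$ such neighbors. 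The task then splits into picking, for each subspace, a canonical lattice representative via row and column operations over the PID $\mathcal{O}$ (the module theory recalled in the Remark following the definition of $N_T^d$), and deciding from that representative whether the vertex lies in $T$.

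First I would account for the neighbors that lie in $T$. A coordinate subspace, spanned by $\{t^{n_j-1}e_j : v_j = -1\}$ for a vector $\bar v = (v_1,\dots,v_d)$ with entries in $\{0,-1\}$ and exactly $k$ entries equal to $-1$, has the diagonal representative $l^{\bar v}$ with columns $t^{n_j+v_j}e_j$. I claim $[l^{\bar v}] \in T$ precisely when $\bar v$ preserves $T$. Indeed, the zeros-before-$(-1)$'s condition inside each constant block of $\bar n$ is exactly what keeps the exponent sequence $\bar n + \bar v$ weakly decreasing: inside a block a $0$ following a $-1$ would create an ascent, whereas between blocks the strict drop $g_l > g_{l+1}$ absorbs a decrease by one. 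When $\bar n + \bar v$ is weakly decreasing it is, after the rescaling by a power of $t$ that restores a trailing $0$, an element of $N_T^d$, so $l^{\bar v}$ is already in the canonical form of a vertex of $T$; conversely any neighbor whose canonical representative is diagonal and sorted is obtained this way. That each such $[l^{\bar v}]$ is a genuine degree-$k$ neighbor is immediate from the description $\bar m = \bar n + \bar v$ in \Cref{neighbors}.

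Next I would treat the neighbors outside $T$, which are the ones coming from the non-coordinate subspaces. For such a subspace I would fix a basis in echelon form relative to the coordinate basis above: the pivots prescribe the $(-1)$-pattern $\bar v$, and the entries lying above the pivots assemble into a single super-diagonal band. Carrying out the matching $\PGL_{d}(\mathcal{O})$ row and column reductions brings $M$ to the asserted bidiagonal representative, with diagonal entries $t^{n_i+v_i}$ and super-diagonal entries of valuation $n_i$ in the positions $(i,i+1)$. One then checks that no power of $t$ times such a lattice is a sorted standard-diagonal lattice, so its vertex genuinely falls outside $T$. The side condition requiring $v_d = 0$ when $k = 1$ removes the lone pattern $\bar v = (0,\dots,0,-1)$, which after multiplication by $t$ re-sorts into $N_T^d$ and hence already occurs among the neighbors of the previous paragraph; excluding it prevents a double count.

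I would close by checking exhaustiveness: the number of $T$-preserving patterns together with the number of admissible bidiagonal forms must total $\binom{d}{k}_{q}$, the number of $k$-dimensional subspaces, which certifies that the two families are disjoint and cover every neighbor. The main obstacle is the reduction in the third paragraph --- organizing the $\PGL_{d}(\mathcal{O})$ operations so that each non-coordinate subspace is carried to exactly one lattice of the prescribed bidiagonal shape, with the super-diagonal band correctly constrained by the block structure of $\bar n$, and with the exceptional rescaling cases (such as the $k=1$ pattern above) separated cleanly. By contrast the first two paragraphs are little more than the relevant definitions and a monotonicity check.
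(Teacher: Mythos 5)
Your proposal follows the same basic route as the paper's proof: both start from Corollary~\ref{neighbors}, realize a degree-$k$ neighbor as a lattice pinched between $\frac{1}{t}L_{\bar n}$ and $L_{\bar n}$, reduce it by column operations over $\mathcal O$ (the PID module theory recalled after the definition of $N_T^d$) to a triangular canonical form, and read off membership in $T$ from whether the resulting exponent sequence is weakly decreasing --- which is exactly what the ``preserves $T$'' condition on $\bar v$ encodes. What you add, and the paper does not have, is the organizing bijection between degree-$k$ neighbors and $\F_q$-subspaces of the $d$-dimensional quotient $L_{\bar n}/\frac{1}{t}L_{\bar n}$, together with the count $\binomq{d}{k}{q}$ as an exhaustiveness certificate; the paper never verifies that its two displayed families account for all neighbors, so this is a genuine improvement. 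Two cautions. First, your dichotomy ``in $T$ $\Leftrightarrow$ coordinate subspace with $T$-preserving $\bar v$'' and ``outside $T$ $\Leftrightarrow$ non-coordinate subspace'' is not exact once $\bar n$ has a block of size at least two: a coordinate pattern that violates the preservation condition (e.g.\ $\bar n=(3,3,0)$, $\bar v=(-1,0,0)$) yields a diagonal but unsorted lattice whose class is a genuine vertex of $B$ outside $T$, distinct from the sorted one; it is covered by the lemma's second display only because the super-diagonal entries there are allowed to vanish, and your closing count must fold these patterns into the second family or it will not balance. Second, the step you rightly flag as the main obstacle --- compressing an arbitrary echelon basis to a single super-diagonal band with entries of the stated valuations using only $\GL_d(\mathcal O)$ column operations --- is precisely the step the paper itself disposes of in one sentence, so neither argument is complete there; carrying it out requires tracking which above-pivot entries can be cleared against the diagonal entries $t^{n_j+v_j}$, and this depends on the block structure of $\bar n$.
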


\begin{proof}
 Let
$\Lambda_{\bar{n}}$ be a vertex of $T$ and $\bar{d}( \bar{n}) =
(d_{1},d_{2}, \cdots ,d_{r})$ be the block sequence.
By Corollary \ref{neighbors}, the neighbors of the vertex of degree $k$, $1 \le k \le d-1$,
that belong to $T$ are as follows. Suppose $\bar{v} = (v_{1}, v_{2},
\dots, v_{d})$ preserves $T$ with zeros in $d - k$ places and $(-1)$'s in $k$
places, 
then all the places $ 1 + \sum_{m=0}^{i-1}d_{m} \le j < j_{0}$ are also $0$
($d_{0} = 0$), else, the neighbor will not be in $T$, because the sequence $ \bar{n}$ does not satisfying the condition that $n_0 \ge n_1 \ge \dots \ge n_d$. 

A lattice $L$ is a neighbor of the lattice $L_{\bar{n}}$ if $L$ satisfies $\frac{1}{t}L_{\bar{n}} \subset L \subset L_{\bar{n}}$, so the powers of each line $j$ of matrix $L$ are $n_j$ or $n_{j}-1$. By column operations we can consider only the upper triangular matrices.
We denote by $\bar{v}^{j}$, $1 \leq j \leq k$ the $k$ vectors satisfying the condition that for each $2 \leq j \leq k$ the vector $\bar{v}^{j} - \bar{v}^{j-1}$ is a vector with $d-1$ zeros and one $1$ in place $l$. If $[L]$ is a neighbor of degree $k$, then there is a sequence of containment $\frac{1}{t}L_{\bar{n}} \subset L = L^{\bar{v}^{1}} \subset L^{\bar{v}^{2}} \subset \dots \subset L^{\bar{v}^{k}}  \subset L_{\bar{n}}$. If there is a $t^{n_l}$ in place $(l,l+1)$, then it can became a zero by a column operation using the diagonal element $t^{n_l}$ in row $l$. The matrices in the sequence are of the form:

\begin{displaymath} (L^{ \bar{v}^{l}})_{ij} = \left\{ \begin{array}{ccc}
		t^{n_{i}+v_{i}}\mathcal{O} & i=j \\
  t^{n_{i}}\mathcal{O} & j=i+1, v_{i} = -1 \\
  0 & \textrm{     otherwise} \\
	\end{array} \right. \end{displaymath}

\end{proof}
\color{black}
\begin{exmpl} \label{expml1} We assume $d = 3$, and describe the neighbors
	of $\bar{n} = (2, 1, 0)$ in $B$. The goal is to show that each
	neighbor is equivalent to a neighbor in $T$ under the action of
	the stabilizer of $\Lambda_{\bar{n}}$ in $\Gamma$. In other words, every
	edge $(\Lambda_{\bar{n}},\Lambda_{\bar{n'}})$, $\Lambda_{\bar{n'}} \in B\setminus T$, is equivalent under
	$\Gamma$ to an edge of the same form with $(\Lambda_{\bar{n}},\Lambda_{\bar{m}}), \Lambda_{\bar{m}} \in T$.
	
	For this particular vertex, $d_{1} = d_{2} = d_{3} = 1$, and $r =
	3$.  The neighbors of $\Lambda_{ \bar{n}} = \left \{ \left(
	\begin{array}{ccc}
		\alpha_1 t^{2} & 0 & 0\\
		0 & \alpha_2 t & 0\\
		0 & 0 & \alpha_3 
	\end{array} \right) :\alpha_{i} \neq 0, \alpha_{i} \in \mathcal{O} \right \}$ are all lattices $[L]$ that satisfy:\\
	\begin{displaymath} \left \{ \left( \begin{array}{ccc}
		\alpha_1 t & 0 & 0\\
		0 & \alpha_2  & 0 \\
		0 & 0 & \alpha_3 t^{-1}
	\end{array} \right): \alpha_{i} \in \mathcal{O} \right \} \subseteq L \subseteq \left \{ \left( \begin{array}{ccc}
		\beta_1 t^{2} & 0 & 0\\
		0 & \beta_2 t & 0\\
		0 & 0 &\beta_3  
	\end{array} \right): \beta_{j} \in \mathcal{O} \right \} \end{displaymath}
	
	Denote the set of neighbors in $T$ and of
	degree $j$ by $I^{j}=\{I^{j}_{i}\}$. The neighbors not in $T$
	and of degree $j$ will be denoted by $O^{j}=\{O^{j}_{i}\}$.

	The neighbors of degree $k = 1$ in $T$ are:
	\begin{displaymath}
		\bar{v} = (0, 0, -1),
		\textrm{ } I_{1}^{1} = L^{ \bar{v}} = \left \{ \left(
		\begin{array}{ccc}
			\alpha_1 t^{2} & 0 & 0 \\
			0 & \alpha_2 t & 0 \\
			0 & 0 & \alpha_3 t^{-1}
		\end{array} \right): \alpha_{i} \in \mathcal{O} \right \} \approx 21(-1)
		\approx 320;
	\end{displaymath}
	\begin{displaymath}
		\bar{v} = (0, -1, 0), \textrm{ } I_{2}^{1} = L^{ \bar{v}} = \left \{\left(
		\begin{array}{ccc}
			\alpha_1 t^{2} & 0 & 0 \\
			0 & \alpha_2 & 0 \\
			0 & 0 & \alpha_3
		\end{array} \right): \alpha_{i} \in \mathcal{O} \right \} \approx 200;
	\end{displaymath}
	\begin{displaymath}	
	\bar{v} = (-1, 0, 0), \textrm{ } I_{3}^{1} = L^{ \bar{v}} = \left \{\left(
		\begin{array}{ccc}
			\alpha_1 t & 0 & 0 \\
			0 & \alpha_2 t & 0 \\
			0 & 0 & \alpha_3
		\end{array} \right): \alpha_{i} \in \mathcal{O} \right \} \approx
		110;
	\end{displaymath}
	\\
The neighbors of degree $k = 2$ in $T$ are:
	\begin{displaymath}
		\bar{v} = (0, -1, -1), 
  \textrm{ } I_{1}^{2} = L^{ \bar{v}} = \left \{ \left( \begin{array}{ccc}
			\alpha_1 t^{2} & 0 & 0\\
			0 & \alpha_2 & 0 \\
			0 & 0 & \alpha_3 t^{-1}
		\end{array} \right): \alpha_{i} \in \mathcal{O} \right \} \approx 20(-1) \approx
		310;
	\end{displaymath}
	\begin{displaymath}
		\bar{v} = (-1, 0, -1), \textrm{ } I_{2}^{2} = L^{ \bar{v}} = \left \{\left(
		\begin{array}{ccc}
			\alpha_1 t & 0 & 0\\
			0 & \alpha_2 t & 0 \\
			0 & 0 & \alpha_3 t^{-1}
		\end{array} \right): \alpha_{i} \in \mathcal{O} \right \} \approx 11(-1) \approx
		220;
	\end{displaymath}
	\begin{displaymath}
	 \bar{v} = (-1, -1, 0), \textrm{ } I_{3}^{2} = L^{ \bar{v}} = \left \{ \left(
		\begin{array}{ccc}
			\alpha_1 t & 0 & 0\\
			0 & \alpha_2 & 0\\
			0 & 0 & \alpha_3
		\end{array} \right): \alpha_{i} \in \mathcal{O} \right \} \approx 100.
	\end{displaymath}
	\\

	The neighbors of degree $k = 1$ outside of $T$ are:
 
	\begin{displaymath}
		\bar{v} = (0, -1, 0), \textrm{ } O_{1}^{1} = L^{ \bar{v}} = 
  \left \{\left( \begin{array}{ccc}
			\alpha_1 t^{2} & \beta_1 t^{2} & 0 \\
			0 & \alpha_2 & \beta_2 t\\
			0 & 0 & \alpha_3
		\end{array} \right): \alpha_{i}, \beta_{j}\in \mathcal{O} \right \} 
	\end{displaymath}
  \\ 
	\begin{displaymath}  
  \approx 
  \left 
  \{\left( \begin{array}{ccc}
			\alpha_1 t^{2} & 0 & 0 \\
			0 & \alpha_2  & \beta t \\
			0 & 0 & \alpha_3
		\end{array}
		\right): \alpha_{i}, \beta \in \mathcal{O} \right \};
	\end{displaymath}
 
	\begin{displaymath}
	\bar{v} = (-1, 0, 0), \textrm{ } O_{2}^{1} = L^{ \bar{v}} =
		\left \{\left(
		\begin{array}{ccc}
			\alpha_1 t & \beta_1 t^{2} & 0 \\
			0 &\alpha_2 t & \beta_{2}t \\
			0 & 0 & \alpha_3
		\end{array} \right):\alpha_{i},  \beta_{j} \in \mathcal{O} \right \} 
  \\
\end{displaymath}
\begin{displaymath}	  
  \approx 
  \left \{\left( \begin{array}{ccc}
			\alpha_1 t & \beta_1 t^{2} & 0 \\
			0 & \alpha_2 t & \beta_{2}t \\
			0 & 0 & \alpha_3
		\end{array}
		\right): \alpha_{i}, \beta_{i} \in \mathcal{O} \right \}.
	\end{displaymath}

 The neighbors of degree $k = 2$ outside of $T$ are:
	\begin{displaymath}
		\bar{v} = (-1, 0, -1), \textrm{ } O_{1}^{2} = L^{ \bar{v}} =\left \{ \left(
		\begin{array}{ccc}
			\alpha_1 t & \beta_1 t^{2} & 0 \\
			0 & \alpha_2 t & \beta_2 t \\
			0 & 0 & \alpha_3 t^{-1}
		\end{array}
		\right): \alpha_{i}, \beta_{j} \in \mathcal{O} \right \} ;
	\end{displaymath}
	\begin{displaymath}
	\bar{v} = (-1, -1, 0), \textrm{ } O_{2}^{2} = L^{ \bar{v}} =
		\left \{\left(
		\begin{array}{ccc}
			\alpha_1 t & \beta_1 t^{2} & 0 \\
			0 & \alpha_2 & \beta_2 t \\
			0 & 0 & \alpha_3 
		\end{array} \right): \alpha_{i}, \beta_{j} \in \mathcal{O} \right \}.
	\end{displaymath}
	\\
\begin{displaymath}
		\bar{v} = (0, -1, -1), \textrm{ } O_{3}^{2} = L^{ \bar{v}} =\left \{ \left(
		\begin{array}{ccc}
			\alpha_1 t^2 & \beta_1 t^{2} & 0 \\
			0 & \alpha_2  & \beta_2 t \\
			0 & 0 & \alpha_3 t^{-1}
		\end{array}
		\right): \alpha_{i}, \beta_{j} \in \mathcal{O} \right \} ;
	\end{displaymath}

	If $L_{\bar{n}}$ has a neighbor in other form, then it equivalent
	to a neighbor of the above form over $ \mathcal{O}$ by column operations.
\end{exmpl}

\begin{exmpl} \label{ex:ngb}
	
	Again assume $d = 3$ and consider the vertex $\bar{n} = (2,1,0)$.
	In this example, we will show a neighbor of $\Lambda_{210}$ with $\bar{n}=(n_{1},n_{2}, n_{3})$
	for which it is not true that $n_{1} \ge n_{2} \ge n_{3}$, but has an equivalent form satisfying this condition.
	
	The neighbor $\left( \begin{array}{ccc}
		t^{3} & 0 & 0\\
		\alpha t^{2} & t & 0\\
		\beta t & 0 & 1
	\end{array} \right)$ can become the defining neighbor
	$\left( \begin{array}{ccc}
		t^{2} & 0 & \alpha't^{3}\\
		0 & t & \beta't^{2}\\
		0 & 0 & t
	\end{array} \right)$.\\
	Denote by $C_{1}$, $C_{2}$, $C_{3}$ the columns of the above first matrix. Then:\\
	\begin{displaymath}
		\left( \begin{array}{ccc}
			t^{3} & 0 & 0\\
			\alpha t^{2} & t & 0\\
			\beta t & 0 & 1
		\end{array} \right) 
        \stackrel{C_3 \ra C_3+C_1}{\rightarrow}
		\left( \begin{array}{ccc}
			t^{3} & 0 & t^{3}\\
			\alpha t^{2} & t & \alpha t^{2}\\
			\beta t & 0 & 1+ \beta t
		\end{array} \right)
		\stackrel{C_{3} \ra C_{3} - \frac{1}{ \beta t}C_{1}}{\rightarrow}
	\end{displaymath}
	\begin{displaymath}
		\left( \begin{array}{ccc}
			t^{3} & 0 & t^{3}-\frac{t^{2}}{\beta}\\
			\alpha t^{2} & t & \alpha t^{2} - \frac{ \alpha}{ \beta}t\\
			\beta t & 0 & \beta t
		\end{array} \right) \stackrel{C_{1} \to C_{1} - C_{3}}{\rightarrow}
		\left( \begin{array}{ccc}
			t^{3}-t^{3}+ \frac{t^{2}}{ \beta} & 0 & t^{3}-\frac{t^{2}}{\beta}\\
			\alpha t^{2}- \alpha t^{2} + \frac{ \alpha}{ \beta}t & t & \alpha t^{2} -                                               \frac{ \alpha}{ \beta}t\\
			\beta t- \beta t & 0 & \beta t
		\end{array} \right)
	\end{displaymath}
	\begin{displaymath}
		=
		\left( \begin{array}{ccc}
			\frac{t^{2}}{ \beta} & 0 & t^{3}-\frac{t^{2}}{\beta}\\
			\frac{ \alpha}{ \beta}t & t & \alpha t^{2} -   \frac{ \alpha}{ \beta}t\\
			0 & 0 & \beta t
		\end{array} \right) \stackrel{C_{1} \to \beta C_{1}}{\rightarrow}
		\left( \begin{array}{ccc}
			t^{2} & 0 & t^{3}-\frac{t^{2}}{\beta}\\
			\alpha t & t & \alpha t^{2} - \frac{ \alpha}{ \beta}t\\
			0 & 0 & \beta t
		\end{array} \right)
		\stackrel{C_{1} \to C_{1} - \alpha C_{2}}{\rightarrow}
	\end{displaymath}
	\begin{displaymath}
		\left( \begin{array}{ccc}
			t^{2} & 0 & t^{3}-\frac{t^{2}}{\beta}\\
			0 & t & \alpha t^{2} -  \frac{ \alpha}{ \beta}t\\
			0 & 0 & \beta t
		\end{array} \right) \stackrel{C_{3} \to C_{3} + \frac{1}{ \beta} C_{1}}{\rightarrow}
		\left( \begin{array}{ccc}
			t^{2} & 0 & t^{3}\\
			0 & t & \alpha t^{2} -\frac{ \alpha}{ \beta}t\\
			0 & 0 & \beta t
		\end{array} \right) \stackrel{C_{3} \to C_{3} + \frac{ \alpha}{ \beta} C_{2}}{\rightarrow}
	\end{displaymath}
	\begin{displaymath}
		\left( \begin{array}{ccc}
			t^{2} & 0 & t^{3}\\
			0 & t & \alpha t^{2}\\
			0 & 0 & \beta t
		\end{array} \right)
		\stackrel{C_{3} \to \frac{1}{ \beta}C_{3}}{\rightarrow}
		\left( \begin{array}{ccc}
			t^{2} & 0 & \frac{1}{ \beta}t^{3}\\
			0 & t & \frac{\alpha}{ \beta} t^{2}\\
			0 & 0 & t
		\end{array} \right).
	\end{displaymath}
\end{exmpl}
\begin{defn}
We call $\bar{c}=(c_{1},c_{2},\dots,c_{d-1})$, $c_{i} \in \{+1, -1, 0 \}$, $i=1, 2, \dots, d-1$, a {\bf chain}. We call $\bar{c}$ an {\bf alternating chain}
if there are no two consecutive plus or minus signs(zeros can
appear between them). 
\end{defn}

\begin{lem}
The neighbors $\bar{n}^{2}$ of a vertex $\bar{n}^{1}$ in $T$ are those vertices
which satisfy:

\begin{enumerate}
	\item $\bar{m}^{2}$ is $\bar{m}^{1}$ with changes $\bar{c} = \bar{m}^{2} - \bar{m}^{1}$, which are an alternating chain
	of length $1 \le j \le d$. \\
	\item If $m_{j}^{1} = 0$ then $c_{j}$ can not be minus. \\
\end{enumerate}
\end{lem}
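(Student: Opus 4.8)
The plan is to translate the description of the neighbors lying in $T$ that was already obtained in the preceding lemma into the language of difference sequences, where it becomes exactly conditions (1) and (2). The key preliminary observation is that the difference sequence $\bar m(\bar n)$ is insensitive to an overall additive shift of $\bar n$, so the normalization $n_d = 0$ (and in particular the possibility $v_d = -1$) never needs to be tracked: if $\bar n^2 = \bar n^1 + \bar v$ for a $0/(-1)$ vector $\bar v = (v_1,\dots,v_d)$, then directly $m_k^2 = (n_k^1 + v_k) - (n_{k+1}^1 + v_{k+1}) = m_k^1 + (v_k - v_{k+1})$, so $\bar c = \bar m^2 - \bar m^1$ is given by $c_k = v_k - v_{k+1}$ for $1 \le k \le d-1$. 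This identity is the bridge between the two descriptions.

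First I would prove the forward direction. By \Cref{neighbors} and the preceding lemma, every neighbor $\bar n^2 \in T$ of $\bar n^1$ arises as $\Lambda^{\bar v}$ for some nonzero $0/(-1)$ vector $\bar v$ that preserves $T$. Reading the entries of $\bar v$ from left to right, they form alternating maximal runs of $0$'s and of $(-1)$'s: a boundary $0 \to -1$ contributes $c_k = +1$, a boundary $-1 \to 0$ contributes $c_k = -1$, and within a run $c_k = 0$. Since the two kinds of boundaries must strictly alternate, the nonzero entries of $\bar c$ alternate in sign, which is condition (1). For condition (2), observe that $m_k^1 = 0$ means positions $k$ and $k+1$ lie in a common block of $\bar n^1$; there the preservation of $T$ forces the pattern $0\cdots0(-1)\cdots(-1)$, so no $-1 \to 0$ boundary can occur inside the block, i.e. $c_k \ne -1$. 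The same statement simultaneously yields $m_k^2 = m_k^1 + c_k \ge 0$, since $c_k = -1$ is then permitted only at a block boundary, where $m_k^1 \ge 1$.

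Then I would prove the converse. Given an alternating chain $\bar c$ satisfying (2), I reconstruct $\bar v$ by setting $v_1 = 0$ or $v_1 = -1$ according as the first nonzero $c_k$ is $+1$ or $-1$, and then $v_{k+1} = v_k - c_k$. The alternation of $\bar c$ guarantees that every $v_k$ stays in $\{0,-1\}$, so $\bar v$ is a legitimate $0/(-1)$ vector; condition (2) guarantees that inside each block of $\bar n^1$ the value passes only from $0$ to $-1$ and never back, so $\bar v$ preserves $T$. Hence $\Lambda^{\bar v}$ is, by the preceding lemma, a neighbor of $\bar n^1$ lying in $T$ with difference sequence $\bar m^1 + \bar c$. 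I would close by noting that the chain must be nonzero: the two constant vectors $\bar v = 0$ and $\bar v = (-1,\dots,-1)$ both yield $\bar c = 0$ and return $\bar n^1$ itself, which is why the alternating chain is required to have positive length.

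The main obstacle I expect is purely the bookkeeping at the block boundaries: one must verify that conditions (1) and (2) are \emph{jointly} equivalent to ``$\bar v$ preserves $T$ and $\bar m^1 + \bar c \ge 0$,'' rather than each capturing only part of that constraint, and that the reconstruction of $\bar v$ from $\bar c$ is well defined in the sense that the choice of $v_1$ is forced by the sign of the first nonzero entry. Once the correspondence $c_k = v_k - v_{k+1}$ is in place, the remainder is a direct comparison of the two descriptions, with the shift-invariance of $\bar m$ removing any need to worry about renormalizing the last coordinate.
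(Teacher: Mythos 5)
Your proposal follows essentially the same route as the paper: both proofs rest on the identity $c_k = v_k - v_{k+1}$ with $v_i \in \{0,-1\}$, derive condition (2) by showing that $m_j^1 = 0$ and $c_j = -1$ would force $n_j^2 < n_{j+1}^2$, and rule out repeated signs by the impossibility of $v$ jumping by $2$. You do, however, go further than the paper in two respects worth noting. First, the paper only argues that no two \emph{adjacent} entries $c_j, c_{j+1}$ can be equal and nonzero, whereas the definition of an alternating chain forbids equal consecutive nonzero signs even with zeros between them; your run-boundary description of $\bar v$ handles this general case cleanly. Second, the lemma is stated as a characterization, but the paper proves only the forward direction (neighbors in $T$ satisfy (1) and (2)); your reconstruction of $\bar v$ from $\bar c$ supplies the converse, including the correct observation that the constant vectors $\bar v = 0$ and $\bar v = (-1,\dots,-1)$ both return $\bar n^1$ itself, which is why the chain must be nonzero. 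Your version is the more complete argument.
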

\begin{proof}	
	Let 
	$\bar{n}^{1}=(n_{1}^{1},n_{2}^{1},...,n_{d}^{1}), \bar{n}^{2}=(n_{1}^{2},n_{2}^{2},...,n_{d}^{2})$ be two neighbors in $T$, such that
	$\bar{n}^{2}=\bar{n}^{1} + \bar{v}$ or $\bar{v}=\bar{n}^{2} \minusset \bar{n}^{1} = (n_{1}^{2}\minusset n_{1}^{1},...,n_{d}^{2} \minusset n_{d}^{1})$.\\
	 For each vertex we can define $\bar{m}^{1}=(n_{1}^{1}-n_{2}^{1},n_{2}^{1}-n_{3}^{1},...,n_{d-1}^{1}-n_{d}^{1}), \bar{m}^{2}=(n_{1}^{2}-n_{2}^{2},n_{2}^{2}-n_{3}^{2},...,n_{d-1}^{2}-n_{d}^{2})$ respectively, such that \\
	 $\bar{c}=\bar{m}^{2}-\bar{m}^{1}=(n_{1}^{2}-n_{2}^{2}-n_{1}^{1}+n_{2}^{1},n_{2}^{2}-n_{3}^{2}-n_{2}^{1}+n_{3}^{1},...,n_{d-1}^{2}-n_{d-1}^{1})=(v_{1}-v_{2},v_{2}-v_{3},...,v_{d-1}-v_{d})$, $v_{i} \in \{{0,-1} \}, 1 \le i \le d-1$.
	 That is $\bar{c}=(c_{1},c_{2},...,c_{d-1})$ satisfies $c_{i} \in \{{0,\pm{1}}\}$.\\
	 We will prove that if for some $1\le j \le d-1$, $m_{j}=0$, then $c_{j} \ge 0$. \\
	 Let $m_{j}=0$, if $c_{j}=-1$, then $v_{j}-v_{j+1}=-1$ and $v_{j}=-1, v_{j+1}=0$, which means that $n_{j}^{2}=n_{j}^{1}-1, n_{j+1}^{2}=n_{j+1}^{1},n_{j}^{1}=n_{j+1}^{1}$, since $m_j = 0$ that leads to $n_{j}^{2}=n_{j}^{1}-1,n_{j+1}^{2}=n_{j}^{1}$ or that $n_{j}^{2} < n_{j+1}^{2}$ in contrast to the definition of a vertex in $T$.\\
	 Now remains to be proven that there is no sequence of two $1$ or sequence of two $(-1)$ in $\bar{c}$.
	 Suppose there is $1 \le j \le d-1$ such that $c_{j}=c_{j+1}=1$ (or $c_{j}=c_{j+1}=-1$), then $v_{j+2}=v_{j}-2$ (or $v_{j+2}=v_{j}+2$), but $v_{j},v_{j+2} \in \{0, -1\}$.
\end{proof}
\begin{lem}
	Let $\bar{n} \in N^{d}_{T}$ with sequence of differences $\bar{m}$.\\ The number of neighbors (in $B$) of color $1$ is 
	$d$.\\
 The number of neighbors in $T$ of color $1$ is 
	$ (1+|m|)$.
\end{lem}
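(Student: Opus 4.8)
The plan is to read the color-$1$ neighbours straight off the combinatorial description in \Cref{neighbors}: a color-$1$ neighbour of $\Lambda_{\bar n}$ is a vertex $\Lambda_{\bar n+\bar v}$, where $\bar v$ has a single entry equal to $-1$ and all remaining entries equal to $0$ (the example $I_1^1$ shows that the $-1$ may also sit in the last slot, to be normalised afterwards). Since $\bar v$ has $d$ coordinates, there are exactly $d$ such vectors $\bar v^{(1)},\dots,\bar v^{(d)}$, where $\bar v^{(j)}$ carries the $-1$ in position $j$. First I would check that the resulting diagonal lattices are pairwise non-homothetic: the lattice attached to $\bar v^{(j)}$ is $\bigoplus_i t^{n_i-[i=j]}\mathcal{O} e_i$, so an equality $L^{\bar v^{(j)}}=t^{c}L^{\bar v^{(j')}}$ forces $[i=j']-[i=j]=c$ for every $i$; evaluating at any $i\neq j,j'$ gives $c=0$, and then $j=j'$. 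Hence the $d$ choices yield $d$ distinct neighbours in $B$, which is the first assertion.

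For the second assertion I would decide, for each $j$, whether $\Lambda_{\bar n+\bar v^{(j)}}$ lies in $T$, i.e. whether $\bar n+\bar v^{(j)}$ becomes a non-increasing sequence with last coordinate $0$ after a global shift (which does not change the homothety class). If $j=d$, then $\bar n+\bar v^{(d)}=(n_1,\dots,n_{d-1},-1)$, and adding $(1,\dots,1)$ gives $(n_1+1,\dots,n_{d-1}+1,0)$, which is non-increasing; so this neighbour always lies in $T$. If $1\le j\le d-1$, only the $j$-th entry changes, giving $(n_1,\dots,n_j-1,\dots,n_{d-1},0)$; here the inequality $n_{j-1}\ge n_j-1$ is automatic, while the sequence is non-increasing exactly when $n_j-1\ge n_{j+1}$, that is, when $m_j=n_j-n_{j+1}\ge 1$, equivalently $m_j\neq 0$.

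Collecting the two cases, the neighbours lying in $T$ are indexed by $j=d$ together with those $j\in\{1,\dots,d-1\}$ for which $m_j\neq 0$. A quick bookkeeping check confirms no overcounting: the $j=d$ representative has coordinate-sum $\big(\sum_{i<d}n_i\big)+(d-1)$ while every $j<d$ representative has sum $\big(\sum_{i<d}n_i\big)-1$, and two distinct $j,j'<d$ give $\bar n-e_j\neq\bar n-e_{j'}$; hence these are $1+|\{j:m_j\neq 0\}|=1+|\bar m|$ distinct vertices, proving the second assertion.

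I expect the only real point needing care to be pinning the notion of \emph{neighbour in $B$} against the lattice picture: one must invoke that, by \Cref{neighbors}, the color-$1$ neighbours are parametrised exactly by the position of the single $-1$ in $\bar v$, and that each position yields a genuinely distinct vertex. Once this parametrisation is fixed, both counts reduce to the elementary observation that lowering $n_j$ by one keeps the sequence non-increasing precisely at a strict descent $m_j\neq 0$, whereas lowering the final coordinate can always be absorbed by a global shift. The leftover verifications — the non-homotheticity of the $d$ diagonal lattices and the automatic inequality $n_{j-1}\ge n_j-1$ — are routine.
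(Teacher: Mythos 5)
Your proof is correct and, at bottom, performs the same count as the paper, but it is organised differently and is considerably more complete. The paper's own proof is a two-line appeal to the preceding lemma on alternating chains: a colour-$1$ neighbour in $T$ corresponds to a difference chain $\bar{c}=\bar{m}^{2}-\bar{m}^{1}$ that is either a single $+1$ (the neighbour obtained by lowering the last coordinate, always admissible) or an adjacent $+1,-1$ pair, and the constraint that $c_{j}$ cannot be $-1$ where $m_{j}=0$ leaves exactly $|\bar{m}|$ chains of the second kind. You bypass the chain lemma entirely and work directly with $\bar{n}$: lowering $n_{j}$ keeps the sequence non-increasing iff $m_{j}\neq 0$, and lowering $n_{d}$ is absorbed by a global shift. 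The two computations are equivalent (your case $j=d$ is the paper's length-one chain, your cases $j<d$ are its length-two chains), but you additionally verify that the $d$ diagonal lattices are pairwise non-homothetic and that the resulting $T$-representatives are distinct, neither of which the paper checks. One caveat applies to both arguments: the first assertion, read literally, is false --- a vertex of $B$ has $(q^{d}-1)/(q-1)$ colour-$1$ neighbours --- so ``$d$ neighbours in $B$'' can only mean the $d$ classes parametrised by the vectors $\bar{v}$ with a single $-1$ (equivalently, the diagonal representatives, or the orbit representatives appearing in the earlier neighbour lemma); you flag this interpretive point yourself, and your proof establishes exactly that reading.
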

\begin{proof}
We deifne the integer $|\bar{m}|$ by $|\bar{m}| = | \{ m_{i} : m_{i} \neq 0 \} |$.
In that case, the sequence is of length $2$, begins with $+$ without zeros between the plus
and minus or it can be a specific sequence of length~$1$, which has $+1$ in place $c_{1}$.
\end{proof}

\subsection{The stabilizer of a vertex}

In this section, we compute for every vertex $\Lambda \in T$ its
stabilizer under the action of $\Gamma$. We determine the action of the stabilizer on the neighbors,
finding the fixed points and the other orbits. This is later used to show that
$T$ is the fundamental domain of $\Gamma$.

\begin{rem}
Let $\Lambda_{\bar{n}}$ be a vertex of $T$. The stabilizer
$\Gamma_{ \bar{n}}$ of
$\Lambda_{\bar{n}}$ under the action of $\Gamma$ on $B$  are those
elements $\gamma \in \Gamma$ such that $\gamma \cdot [L_{ \bar{n}}]
= [ L_{ \bar{n}}]$. In terms of matrices, this is equivalent to
$\gamma L_{\bar{n}}  = L_{\bar{n}}\cdot o$ for some $o \in
\PGL_{d}(\mathcal O)$.
\end{rem}
The stabilizers are structured as a block upper diagonal matrix. This structure depends on $\bar{n}$.
Let $\bar{d} = (d_{1},d_{2},\dots,d_{l})$ be a block sequence, where the
block $l_{i}$ has the value $g_{l_{i}}$. Let $M_{ \bar{n}}$ be a
matrix of $\PGL_{d}( \mathbb{F}_{q}[t])$ with blocks of size
$d_{l} \times d_{m}$, $1 \le l, m \le r$. For every $i$ and $j$
there are $l$ and $m$, such that $\sum_{k=1}^{l}d_{k}-d_{l} < i \le
\sum_{k=1}^{l}d_{k}$ and  $\sum_{k=1}^{m}d_{k}-d_{m} < j \le
\sum_{k=1}^{m}d_{k}$. If $M$ is a matrix in the stabilizer of $\bar{n}$, then 

\begin{displaymath} M_{ij} = \left\{ \begin{array}{cc}
		0 & l > m \\
  f^{g_{l}-g_{m}}_{ij} & l \le m \\
	\end{array} \right. \end{displaymath}
and $deg(f^{g_{l}-g_{m}}_{ij}) \le g_{l}-g_{m}$.
\begin{exmpl}
	Take $d = 6$. If $\bar{n} = (n_{1}, n_{2}, n_{3}, n_{4}, n_{5},
	n_{6}) = (6, 4, 4, 4, 0, 0)$, then $r = 3$ and $d_{1} = 1$, $d_{2}
	= 3$, $d_{3} = 2$, where $n_{1}$ is in $d_{1}$, $n_{2} = n_{3} =
	n_{4}$ are in $d_{2}$ and $n_{5} = n_{6}$ are in $d_{3}$.
	
	The matrix of the differences, $A = (a_{ij})$, $a_{ij} = n_{i} -
	n_{j}$ is  $$\left( \begin{array}{cccccc}
		0 & 2 & 2 & 2 & 6 & 6\\
		-2 & 0 & 0 & 0 & 4 & 4 \\
		-2 & 0 & 0 & 0 & 4 & 4\\
		-2 & 0 & 0 & 0 & 4 & 4\\
		-6 & -4 & -4 & -4 & 0 & 0\\
		-6 & -4 & -4 & -4 & 0 & 0
	\end{array} \right).$$
	
	Therefore, the stabilizer is composed of matrices $M_{\bar{n}}$ of
	the form $$\left(
	\begin{array}{cccccc}
		f^{0}_{11} & f^{2}_{12} & f^{2}_{13} & f^{2}_{14} & f^{6}_{15} & f^{6}_{16}\\
		0 & f^{0}_{22} & f^{0}_{23} & f^{0}_{24} & f^{4}_{25} & f^{4}_{26} \\
		0 & f^{0}_{32} & f^{0}_{33} & f^{0}_{34} & f^{4}_{35} & f^{4}_{36}\\
		0 & f^{0}_{42} & f^{0}_{43} & f^{0}_{44} & f^{4}_{45} & f^{4}_{46}\\
		0 & 0 & 0 & 0 & f^{0}_{55} & f^{0}_{56}\\
		0 & 0 & 0 & 0 & f^{0}_{65} & f^{0}_{66}
	\end{array} \right),$$\\
	where for every $1 \le i, j \le 6$ $f_{ij}^{k} \in \mathbb{F}_{q}[t]$,
	and $\deg(f^{k}_{ij}) \le k$.
\end{exmpl}

\begin{prop}
	Let $\Lambda_{\bar{n}}$ be a vertex of $T$
	and denote by $\Gamma_{\bar{n}}$
	the stabilizer of the vertex. Then $\Gamma_{ \bar{n}} = \{M_{\bar{n}}\}$.
\end{prop}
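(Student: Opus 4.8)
The plan is to translate the stabilizer condition of the preceding Remark into a single matrix identity and then read off the block shape of $M_{\bar n}$ entry by entry, using the valuation $v$ (normalised so that $v(1/t)=1$, hence $v(t)=-1$). By that Remark, $\gamma \in \Gamma$ fixes $\Lambda_{\bar n}$ exactly when $\gamma L_{\bar n} = L_{\bar n}\,o$ for some $o \in \PGL_d(\mathcal{O})$. Since the action takes place in $\PGL_d(F)$, this is \emph{a priori} an equality of matrices only up to a scalar $\lambda \in F^\times$, and the first step is to eliminate $\lambda$. Lifting $\gamma$ to $\GL_d(\F_q[t])$ and $o$ to $\GL_d(\mathcal{O})$, we have $\det\gamma \in \F_q^\times$ and $\det o \in \mathcal{O}^\times$; comparing determinants in $\gamma L_{\bar n} = \lambda\, L_{\bar n}\, o$ gives $\lambda^{d} = \det\gamma/\det o \in \mathcal{O}^\times$, so $v(\lambda)=0$ and $\lambda \in \mathcal{O}^\times$. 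Absorbing $\lambda$ into $o$ (which then still lies in $\GL_d(\mathcal{O})$), I may assume the honest matrix identity $\gamma L_{\bar n}=L_{\bar n}o$, that is
\[
\gamma_{ij} = t^{\,n_i-n_j}\,o_{ij} \qquad (1\le i,j\le d).
\]

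For the inclusion $\Gamma_{\bar n}\subseteq\{M_{\bar n}\}$, fix $i,j$, say with $i$ in block $l$ and $j$ in block $m$, so $n_i=g_l$ and $n_j=g_m$. Since $o_{ij}\in\mathcal{O}$ we have $\gamma_{ij}\in t^{\,n_i-n_j}\mathcal{O}\cap\F_q[t]$, and the whole statement follows from the elementary description
\[
t^{s}\mathcal{O}\cap\F_q[t]=\{\,f\in\F_q[t] : \deg f\le s\,\},
\]
which is immediate from $v(f)=-\deg f$. When $l>m$ we have $s=g_l-g_m<0$, forcing $\gamma_{ij}=0$; when $l=m$ we have $s=0$, so $\gamma_{ij}\in\F_q$ is a constant; and when $l<m$ we have $s=g_l-g_m>0$, giving $\deg\gamma_{ij}\le g_l-g_m$. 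This is precisely the defining shape of $M_{\bar n}$.

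For the reverse inclusion $\{M_{\bar n}\}\subseteq\Gamma_{\bar n}$, I would run the computation backwards: given $M=M_{\bar n}$, set $o=L_{\bar n}^{-1}ML_{\bar n}$, so that $o_{ij}=t^{\,n_j-n_i}M_{ij}$. The three cases above show that $o_{ij}\in\mathcal{O}$ in each block position (the bound $\deg M_{ij}\le n_i-n_j$ when $l\le m$ makes $t^{\,n_j-n_i}M_{ij}$ have nonnegative valuation, and $M_{ij}=0$ when $l>m$), so $o$ has entries in $\mathcal{O}$. Finally $\det o=\det M\in\F_q^\times\subseteq\mathcal{O}^\times$ because $M$ is block upper triangular with diagonal blocks over $\F_q$, whence $o\in\GL_d(\mathcal{O})$ and $M$ fixes $\Lambda_{\bar n}$.

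The only delicate point is the bookkeeping with the projective scalar in the opening step: one must guarantee that the class equality $\gamma L_{\bar n}=L_{\bar n}o$ can be represented simultaneously by a matrix over $\F_q[t]$ and a matrix over $\mathcal{O}$, which is exactly what the determinant argument forcing $\lambda\in\mathcal{O}^\times$ provides. After that, the proposition is purely a matter of the graded intersection $t^{s}\mathcal{O}\cap\F_q[t]$ applied to each entry, refining the identity $\F_q[t]\cap\mathcal{O}=\F_q$ already noted in the text.
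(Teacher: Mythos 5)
Your proof is correct and follows essentially the same route as the paper's: conjugate $\gamma$ by the diagonal matrix $L_{\bar n}$ and read off, entry by entry, the condition $v(\gamma_{ij})\ge n_j-n_i$ combined with $\gamma_{ij}\in\F_q[t]$, i.e.\ $\gamma_{ij}\in t^{\,n_i-n_j}\mathcal{O}\cap\F_q[t]$. You are in fact somewhat more complete than the paper, which silently ignores the projective scalar $\lambda$ and does not spell out the reverse inclusion $\{M_{\bar n}\}\subseteq\Gamma_{\bar n}$; both of your additions are sound.
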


	
\begin{proof}
	The matrix $L_{ \bar{n}}$ has the form $(t)_{ij}$, where $(t)_{ij} = \left\{ \begin{array}{ll} t^{n_{i}} & i=j \\
		0 &    i \ne j
	\end{array} \right.$ and $1 \le i, j \le d$.
	
	Let $\gamma$ be an element of $\Gamma_{ \bar{n}}$. The equation $\gamma L_{ \bar{n}} = L_{ \bar{n}}o$ ($ o \in \PGL_{d}(\mathcal{O}$) leads to the equation $(t_{ij})^{-1} \gamma (t_{ij}) = o$ and then $t^{-n_{i}} \gamma_{ij}t^{n_{j}} = o_{ij} \in \mathcal{O}$.
	Use the valuation of the field, $v(o_{ij}) = v(t^{-n_{i}} \gamma_{ij}t^{n_{j}}) \ge 0$ and $v(t^{-n_{i}} \gamma_{ij}t^{n_{j}}) = n_{i} - n_{j} + v( \gamma_{ij}) \ge 0$, $v( \gamma_{ij}) \le 0 $. \\ The conclusion of this is that $0 \ge v( \gamma_{ij}) \ge n_{j} - n_{i}$.\\
	So, $\gamma_{ij} = \left \{ \begin{array}{ll}
		c \in F & i = j\\
		g \in F[t], deg(g) \le n_{i}-n_{j} & i < j \\
		0 & j < i
	\end{array} \right.$ and $\gamma$ is of the form $M_{\bar{n}}$.

\end{proof}

Let $(\Lambda_{\bar{n}^{1}}, \Lambda_{\bar{n}^{2}})$
be an edge of $T$ between vertices $\Lambda_{\bar{n}^{1}}$ and
$\Lambda_{\bar{n}^{2}}$. Denote by $\Gamma(\bar{n}^{1}, \bar{n}^{2})$
the stabilizers of the edge $(\Lambda_{\bar{n}^{1}}, \Lambda_{\bar{n}^{2}})$,
which is defined by
$\Gamma(\bar{n}^{1}, \bar{n}^{2}) = \Gamma_{\bar{n}^{1}} \cap \Gamma_{\bar{n}^{2}}$ and this is a subgroup of $\Gamma_{\bar{n}}$, hence a subgroup of $\Gamma$.

{\bf The order of $\Gamma_{\bar{n}}$.}

The number of possible polynomials in a block $d_{l} \times d_{m}$,
where $l < m$, which is polynomial of degree less or equals to  $g_{l} - g_{m}$ is 
\[q^{d_{l}d_{m}(g_{l} - g_{m} + 1)} = q^{d_{l}d_{m}(
	\sum_{k=l}^{d-1}m_{k} - \sum_{k=m}^{d-1}m_{k} +1)} =
q^{d_{l}d_{m}( \sum_{k=l}^{m-1}m_{k} +1)}\]
If $l = m$ then the
polynomials of block $d_{l} \times d_{l}$ has degree $0$, but the
block has to be invertible, so the number of possible such
polynomials is $|\GL_{d_{l}}(F)|$. If $l > m$ then the block
$d_{l}  \times d_{m}$ is a block of zeros. Hence,
$$|\Gamma_{ \bar{n}}| =
\frac{1}{q-1} \prod_{i=1}^{r}|\GL_{d_{i}}(F)|q^{ \sum_{i<j \le
		r}d_{i}d_{j}( \sum_{l=i}^{j-1}m_{l}+1)}.$$

\begin{rem}
	It is worth noting that for some neighbors, $u,v$, the stabilizer
	of $u$ contains the stabilizer of $v$. More precisely, $\Gamma_{
		\bar{n^{1}}} \subseteq \Gamma_{ \bar{n^{2}}}$ iff:
	\begin{itemize}
		\item The series $\bar{n^{1}}$ and $\bar{n^{2}}$ have the
		same partition of $d$, \item
		The series $\bar{m} ( \bar{n^{1}}) =
		(m_{1}^{1}, m_{2}^{1}, \dots, m_{d-1}^{1})$ and \\
		$\bar{m} (\bar{n^{2}} ) = (m_{1}^{2}, m_{2}^{2} \dots, m_{d-1}^{2})$ satisfy
		$m_{i}^{1} \le m_{i}^{2}$ for all $1 \le i \le d-1$.
	\end{itemize}
\end{rem}
\begin{proof}
    We know that $ m_k=g_l-g_{l+1}$ or $m_k=0$, then if $g_l^1-g_{l+1}^1 \le g_l^2-g_{l+1}^2$ this leads to $m_k^1 \le m_k^2$.
\end{proof}
If $\Gamma_{ \bar{m}} \subseteq \Gamma_{ \bar{n}}$ then $\Gamma_{
	\bar{m}}$ trivially stabilizes $\Lambda_{ \bar{n}}$.
\begin{rem}
	If $\bar{n}^{2}$ is a neighbor of $\bar{n}^{1}$, then $|m_{i}^{1}
	- m_{i}^{2}| \le 1$ for every $1 \le i \le d-1$.
\end{rem}
\color{black}
\begin{exmpl}
	Let $d=3$. The diagram of the inclusions between the stabilizers
	is given in the following diagram, where $\Gamma_{ \bar{n}}$
	contains $\Gamma_{ \bar{m}}$ if and only if there is an arrow from
	$\Gamma_{ \bar{m}}$ to  $\Gamma_{ \bar{n}}$:
	\begin{displaymath}
		\xymatrix{
			& & & & \Gamma_{440} & \dots\\
			& & & \Gamma_{330} \ar[ur] & \Gamma_{430} \ar[ur] \ar[r] & \dots \\
			& & \Gamma_{220} \ar[ur] & \Gamma_{320} \ar[ur] \ar[r] & \Gamma_{420} \ar[ur] \ar[r] & \dots \\
			& \Gamma_{110} \ar[ur] & \Gamma_{210} \ar[ur] \ar[r] & \Gamma_{310} \ar[ur] \ar[r] & \Gamma_{410} \ar[ur] \ar[r] & \dots \\
			\Gamma_{000} & \Gamma_{100} \ar[r] & \Gamma_{200} \ar[r] & \Gamma_{300} \ar[r] & \Gamma_{400} \ar[r] & \dots }
	\end{displaymath}
\end{exmpl}

\subsection{The action of the stabilizer on other
	vertices}
There are three kinds of neighbors for a vertex $\Lambda_{
	\bar{n}} \in T$, those which belong to $B \backslash T$, those which are in $T$ and the relations between the stabilizers are of containing (the stabilizer of the vertex $\Lambda_{\bar{n}}$ is contained in the stabilizer of the neighbor), those which are in $T$, but do not have a relation of
containment. 
\begin{defn}
Let $\Lambda_{\bar{n}}$ be a vertex in $T$.
    The neighbors of $\Lambda_{ \bar{n}}$ in $T$ that are stabilized by $\Gamma_{
	\bar{n}}$ will be called its {\bf friends}.
\end{defn}

\begin{lem}
	Let $ \bar{n}$ be an index of a vertex in $T$ and $\bar{m} =
	(m_{1}, m_{2}, \dots, m_{d-1})$ suitable to $\bar{n}$.
	Then there are $|\bar{m}|$ friends of $\bar{n}$ in $T$ under the
	action of $\Gamma_{\bar{n}}$.
\end{lem}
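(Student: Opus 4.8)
The plan is to translate ``being a friend'' into the two combinatorial conditions already established and then count the admissible changes of the difference sequence. By definition a friend of $\bar n$ is a neighbor $\bar n'$ of $\bar n$ lying in $T$ whose vertex $\Lambda_{\bar n'}$ is fixed by every element of $\Gamma_{\bar n}$; equivalently $\Gamma_{\bar n}\subseteq\Gamma_{\bar n'}$. I would therefore start from the criterion for an inclusion of stabilizers proved above: $\Gamma_{\bar n}\subseteq\Gamma_{\bar n'}$ holds exactly when $\bar n$ and $\bar n'$ determine the same block partition of $d$ and $m_i(\bar n)\le m_i(\bar n')$ for all $1\le i\le d-1$. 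Setting $\bar c=\bar m(\bar n')-\bar m(\bar n)$, these two requirements read: $\bar c\ge 0$, and $c_i=0$ at every position where $m_i(\bar n)=0$ (a strictly positive $c_i$ at such a place would split a block and change the partition).

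Next I would bring in the description of neighbors in $T$: $\bar n'$ is a neighbor of $\bar n$ in $T$ precisely when $\bar c$ is an alternating chain obeying the sign constraint $m_j(\bar n)=0\Rightarrow c_j\ge 0$. Since the entries of a chain lie in $\{0,\pm 1\}$ and we already have $\bar c\ge 0$, every $c_i$ lies in $\{0,+1\}$. The key observation is that such a nonnegative alternating chain can contain at most one $+1$: two entries equal to $+1$ would constitute two consecutive plus signs (with only zeros between them), which the definition of an alternating chain forbids. As a friend is distinct from $\bar n$, exactly one coordinate of $\bar c$ equals $+1$ and the rest vanish, so $\bar c=e_i$ for a single index $i$; moreover $c_i=+1\neq 0$ forces $m_i(\bar n)>0$ by the partition requirement. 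Thus every friend arises from one of the $|\bar m|$ positions $i$ with $m_i(\bar n)\ne 0$.

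For the converse I would check that each such $i$ indeed yields a friend. Taking $\bar c=e_i$ and solving $c_k=v_k-v_{k+1}$ under the normalization $n'_d=0$ gives $\bar n'=\bar n+(1,\dots,1,0,\dots,0)$, with $i$ leading ones; since $m_i(\bar n)\ge 1$ the coordinates of $\bar n'$ are still non-increasing and end in $0$, so $\bar n'\in T$, and $\bar c=e_i$ is a length-one alternating chain respecting the sign constraint, so $\bar n'$ is a genuine single-edge neighbor of $\bar n$ in $T$ (of color $i$). It has the same block partition as $\bar n$ and satisfies $m_i(\bar n)<m_i(\bar n')$ with equality in all other coordinates, whence $\Gamma_{\bar n}\subseteq\Gamma_{\bar n'}$ by the inclusion criterion; that is, $\bar n'$ is a friend. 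Distinct indices $i$ give distinct sequences $\bar c=e_i$, hence distinct friends, so the number of friends is exactly $|\bar m|$.

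The step I expect to be the main obstacle is pinning down the elementary but delicate claim in the middle paragraph — that nonnegativity together with the alternating property leaves exactly one degree of freedom, a single $+1$ at a nonzero block boundary — and verifying carefully that the resulting $\bar n'$ is simultaneously a legitimate $T$-vertex, an honest single-edge neighbor, and a solution of the stabilizer inclusion. The $d=3$ vertex $\bar n=(2,1,0)$, whose two friends are $(3,1,0)$ and $(3,2,0)$, is a useful check that the correspondence is a bijection and that neighbors such as $(2,2,0)$ or $(1,0,0)$, which alter the partition, are correctly excluded.
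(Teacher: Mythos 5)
Your proposal is correct, and it is worth noting that the paper itself offers no proof of this lemma at all: the statement is simply asserted, followed by the observation that only $\bar n=\bar 0$ has no friends, and the explicit identification of the friends is deferred to a later unproved remark (a friend of degree $k$ exists iff $k$ is a partial sum of the block sizes). Your argument assembles exactly the two ingredients the paper develops in the surrounding text --- the characterization of neighbors in $T$ by alternating chains $\bar c=\bar m(\bar n')-\bar m(\bar n)$ with the sign constraint at zero positions, and the criterion $\Gamma_{\bar n}\subseteq\Gamma_{\bar n'}$ iff the block partitions agree and $m_i(\bar n)\le m_i(\bar n')$ --- and the key counting step (a nonnegative alternating chain has at most one nonzero entry, which must sit at a position with $m_i\ne 0$) is exactly right. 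One point you handle well but should keep explicit: the paper's neighbor lemma is only argued in the necessity direction, so your converse cannot lean on it; your direct verification that $\bar n'=\bar n+(1,\dots,1,0,\dots,0)$ is a legitimate vertex of $T$ and a genuine neighbor via the definition of oriented edges is what closes that gap. Your explicit description of the friends, $\bar n'=\bar n+(1^{(i)},0^{(d-i)})$ for each $i$ with $m_i\ne 0$, agrees (after renormalizing the last coordinate to $0$) with the paper's later remark, and the $d=3$ sanity check with $\bar n=(2,1,0)$ matches the paper's own examples. The only caution is notational: the paper writes $|\bar m|=|\{m_i: m_i\ne 0\}|$, which read literally counts distinct nonzero values; your reading as the number of indices with $m_i\ne 0$ (equivalently $r-1$, one less than the number of blocks) is clearly the intended one and is the only reading under which the lemma is true.
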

The only case, in which a vertex does not have friends, is
when $\bar{n} = \bar{0}$.
\begin{lem}
	Let $\Lambda_{ \bar{n}}$ be a vertex of $T$. Then $\Gamma_{
		\bar{n}}$ is the stabilizer of the set of all neighbors of
	$\Lambda_{ \bar{n}}$.
\end{lem}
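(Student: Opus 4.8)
The plan is to prove the two inclusions $\Gamma_{\bar{n}} \subseteq H$ and $H \subseteq \Gamma_{\bar{n}}$, where $H$ denotes the setwise stabilizer in $\Gamma$ of the set $N(\Lambda_{\bar{n}})$ of all neighbors of $\Lambda_{\bar{n}}$. The first inclusion is immediate: every $\gamma \in \Gamma$ acts on $B$ by simplicial automorphisms preserving adjacency, so if $\gamma\Lambda_{\bar{n}} = \Lambda_{\bar{n}}$ then $\gamma$ carries each neighbor of $\Lambda_{\bar{n}}$ to a neighbor of $\Lambda_{\bar{n}}$; since $\Gamma_{\bar{n}}$ is a group, applying the same to $\gamma^{-1}$ gives $\gamma N(\Lambda_{\bar{n}}) = N(\Lambda_{\bar{n}})$, that is, $\gamma \in H$.

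For the reverse inclusion, fix $\gamma \in H$ and set $x' = \gamma\Lambda_{\bar{n}}$. Because $\gamma$ is an adjacency-preserving bijection of $B^{0}$, a vertex $z$ is a neighbor of $x'$ if and only if $\gamma^{-1}z$ is a neighbor of $\Lambda_{\bar{n}}$, i.e.\ iff $z \in \gamma N(\Lambda_{\bar{n}}) = N(\Lambda_{\bar{n}})$. Hence $N(x') = N(\Lambda_{\bar{n}})$: the vertices $\Lambda_{\bar{n}}$ and $x'$ have exactly the same set of neighbors. The whole statement therefore reduces to the rigidity claim that a vertex of $B$ is uniquely determined by its set of neighbors; granting this, $x' = \Lambda_{\bar{n}}$, so $\gamma \in \Gamma_{\bar{n}}$ and the proof is complete.

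It remains to establish the rigidity claim, which is where the real work lies. Write $\Lambda_{\bar{n}} = [L]$ and $x' = [\tilde{L}]$, and suppose $N([L]) = N([\tilde{L}])$. I would first record that every element of $\Gamma = \PGL_{d}(\F_q[t])$ preserves colors: its determinant lies in $\F_q[t]^{\times} = \F_q^{\times}$, which has valuation $0$, so $\nu(\det\gamma) \equiv 0 \pmod{d}$ and the color functions $CL_V, CL_E$ are respected. Consequently $N([L])$ splits into the color classes $\{[M] : CL_E([L],[M]) = i\}$ for $1 \le i \le d-1$, and these are preserved individually. By \Cref{neighbors} and the coloring conventions, a color-$i$ neighbor $[M]$ (normalized so that $\frac{1}{t}L \subset M \subset L$) corresponds to the subspace $M/\frac{1}{t}L$ of $\bar{L} = L/\frac{1}{t}L$ of dimension $d-i$; thus the link of $[L]$ is the flag complex of the $\F_q$-space $\bar{L}$. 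I would then recover $L$ up to homothety: the color-$1$ neighbors are the preimages of the hyperplanes of $\bar{L}$ and satisfy $\bigcap_{M} M = \frac{1}{t}L$, while (if one wants $L$ directly) the color-$(d-1)$ neighbors are the preimages of the lines of $\bar{L}$ and satisfy $\sum_{M} M = L$; since $[\frac{1}{t}L] = [L]$, the class $[L]$ is determined by $N([L])$ already from the color-$1$ part. Equivalently, distinct subspaces of $\bar{L}$ have distinct neighbor sets in the flag complex, so the center of the star is forced, giving $[\tilde{L}] = [L]$.

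The main obstacle is precisely the bookkeeping of homothety representatives in this reconstruction: the neighbor set is presented only as a collection of homothety classes, so one must pin down compatible representatives before the intersection and sum identities can be applied. I would handle this by fixing a common neighbor $y \in N([L]) = N([\tilde{L}])$ and passing to the residue of chambers through $y$, which trivializes the scalar ambiguity and lets one compare $L$ and $\tilde{L}$ inside a single lattice. Once representatives are fixed consistently, the identities $\frac{1}{t}L = \bigcap M$ and $L = \sum M$ are routine, and combined with the first two paragraphs they yield $\Gamma_{\bar{n}} = H$.
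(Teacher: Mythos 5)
The paper's entire proof of this lemma is your first paragraph: it observes that for a neighbor $[L]$ one has $\frac{1}{t}L_{\bar{n}} \subseteq L \subseteq L_{\bar{n}}$, hence $\gamma\frac{1}{t}L_{\bar{n}} \subseteq \gamma L \subseteq \gamma L_{\bar{n}} \approx L_{\bar{n}}$, so $\gamma$ carries neighbors to neighbors --- and it stops there. That inclusion you prove correctly and by the same reasoning. Everything after your first paragraph is an attempt at the reverse inclusion (that the setwise stabilizer of $N(\Lambda_{\bar{n}})$ is no larger than $\Gamma_{\bar{n}}$), which the paper does not address at all; whether it is needed depends on reading ``is the stabilizer'' literally rather than as ``stabilizes.''

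That extra part is the right strategy but is not yet a proof, for two reasons. First, the reduction to color-$1$ neighbors is not justified: what you know is that $\gamma$ maps the color-$i$ neighbors of $[L]$ onto the color-$i$ neighbors of $x'=\gamma[L]$, and that $N(x')=N([L])$ as unstructured sets; it does not follow that the color-$i$ neighbors of $x'$ coincide with the color-$i$ neighbors of $[L]$ as subsets of that common set, so the two intersections $\bigcap M$ you want to compare need not range over the same family. (This is repairable: run the intersection and sum over \emph{all} neighbors, since $\bigcap M = \frac{1}{t}L$ and $\sum M = L$ already hold for the full neighbor set, making the color bookkeeping unnecessary.) Second, and more seriously, the homothety normalization that you yourself flag as ``the main obstacle'' is never actually resolved: you need that a single representative $\tilde{L}$ of $x'$ can be chosen so that the representatives $M$ normalized by $\frac{1}{t}L \subsetneq M \subsetneq L$ simultaneously satisfy $\frac{1}{t}\tilde{L} \subsetneq M \subsetneq \tilde{L}$, i.e.\ that the shift $k(M)$ with $t^{k(M)}M \subsetneq \tilde{L} \subsetneq t^{k(M)+1}M$ is independent of $M$. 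The sentence ``I would handle this by fixing a common neighbor $y$ and passing to the residue'' names a plausible device but does not carry out the argument, and the rigidity claim is exactly where the content of the reverse inclusion lies. As it stands, the proposal proves the inclusion the paper proves and leaves the converse as an outlined conjecture.
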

\begin{proof}
	If $[L]$ is a neighbor of $\Lambda_{ \bar{n}}$ then
	$\frac{1}{t}L_{ \bar{n}} \subseteq L \subseteq L_{ \bar{n}}$, so
	the action of $\gamma \in \Gamma_{ \bar{n}}$ is $\gamma
	\frac{1}{t}L_{ \bar{n}} \subseteq \gamma L \subseteq \gamma L_{
		\bar{n}} \approx L_{ \bar{n}}$.
\end{proof}
\begin{exmpl}
    Let $\bar{n}^1 =(3,3,0)$. The neighbors of color 1 are $\bar{n}^2=(4,4,0), \bar{n}^3 = (3,2,0)$. Then $d^1 _1 =2, d^1_2 =1, d^2_1 =2, d^2_2 = 1, d^3_1 =1, d^3_2 = 1, d^3_3 =1$ and $\bar{m}^1 = (0,3), \bar{m}^2 = (0,4), \bar{m}^3 = (1,2)$, so $\bar{n}^2$ is a friend of $\bar{n}^1$ and $\bar{n}^3$ is not a friend of $\bar{n}^1$.\\
    Let $\bar{n}^1 =(3,2,0)$. The neighbors of color 1 are $\bar{n}^2=(4,3,0), \bar{n}^3 = (3,1,0), \bar{n}^4 =(2,2,0)$. Then $d^1 _1 =1, d^1_2 =1, d^1_3=1, d^2_1 =1, d^2_2 = 1, d^2_3=1, d^3_1 =1, d^3_2 = 1, d^3_3 =1, d^4_1=2, d^4_2=1$ and $\bar{m}^1 = (1,2), \bar{m}^2 = (1,3), \bar{m}^3 = (2,1), \bar{m}^4 =(0,2)$, so $\bar{n}^4$ and $\bar{n}^3$ are not a friends of $\bar{n}^1$, but $\bar{n}^2$ is a friend of color $1$ of $\bar{n}^1$.
\end{exmpl}

\begin{rem}
	Let $\bar{n} = (n_{1},n_{2},\dots, n_{d})$ such that $m_{d-1} \ne 0$. 
 Then there is only one friend of degree $1$ of a vertex
	$[L_{\bar{n}}]$ in $T$. 
     The friend is $[L_{\bar{n}}^1]$, 
	$\bar{n}^1 = (n^1_{1},n^1_{2},\dots, n^1_{d-1}, n^1_{d}-1)$   
    (otherwise, the condition $\bar{m}_{i}^{1} \le \bar{m}_{i}^{2}$ is not preserved
	for every $1 \le i \le (d-1)$). If $m_{d-1} = 0$ then there is no
	friend of degree $1$. \\
 
 In general, if $\bar{n} = (n_{1},n_{2},\dots, n_{d})$ with block sequence $\bar{d} = (d_{1},d_{2}, \dots, d_{r})$ has a friend of degree $k$ in $T$ iff there exist $i_0$, such that $k = d_1 + d_2 + \cdots +d_{i_0}$. The friend is $\bar{n}^k = (n_1, n_2, \dots, n_{d-k}, n_{d -k+1}-1, \dots , n_d -1)$ .
 
\end{rem}

\begin{lem}
	Let $\Lambda_{ \bar{n}}$ be a vertex of $T$, where $\bar{n} =
	(n_{1}, \dots, n_{d}=0)$ and suppose  $m_{d-1} \ne 0$.
	$\Gamma_{\bar{n}}$ moves the
	neighbors of degree $p$ of $[L_{ \bar{n}}]$ in $T$, which are not
	its friends, to neighbors of degree $p$ of $[L_{ \bar{n}}]$
	outside of $T$, and vice versa. The changes in the matrix of the
	suitable neighbor will be in the same columns. If $m_{d-1} = 0$ then
	$\Gamma_{\bar{n}}$ moves the neighbors of degree $p$ of
	$[L_{\bar{n}}]$ in $T$, which are not its friends, to
	neighbors of degree $p$ of $[L_{ \bar{n}}]$ outside of $T$  and
	vice 
 versa, and the changes in the matrix of the suitable neighbor
	will be in the same blocks.
\end{lem}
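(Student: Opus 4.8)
My plan is to deduce the statement from two facts already in hand: by the previous Lemma $\Gamma_{\bar n}$ stabilizes the whole set of neighbors of $\Lambda_{\bar n}$, so $[L]\mapsto[\gamma L]$ permutes the neighbors; and it preserves the degree $p$ because $\Gamma$ acts by colour-preserving automorphisms (the determinant of any $\gamma\in\Gamma$ is a nonzero constant, of valuation $0$). By the Proposition, $\Gamma_{\bar n}$ is block upper-triangular, with each diagonal block a constant matrix from $\GL_{d_l}(\mathbb F_q)$ and each super-diagonal block filled by polynomials of the bounded degrees recorded there. The friends are exactly the neighbors fixed by this action, by the inclusion criterion for stabilizers; hence it suffices to show that every non-friend neighbor in $T$ admits an element of $\Gamma_{\bar n}$ carrying it outside $T$, the reverse direction following by applying the inverse element.

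The prototypical move, modeled on Example~\ref{expml1}, is as follows. Write a non-friend in its $T$-normal form $L^{\bar v}$, the diagonal lattice with columns $t^{n_j+v_j}e_j$ from the earlier Lemma. When $\bar v$ has an index $i$ with $v_i=-1$ and $v_{i+1}=0$, I apply $\gamma=I+t^{\,n_i-n_{i+1}}E_{i,i+1}\in\Gamma_{\bar n}$, where $E_{i,i+1}$ is the matrix unit in position $(i,i+1)$ (admissible since the Proposition permits degree $n_i-n_{i+1}$ in that slot). Left multiplication fixes every column except the $(i+1)$-st, which becomes $t^{n_{i+1}}(e_{i+1}+t^{\,n_i-n_{i+1}}e_i)=t^{n_{i+1}}e_{i+1}+t^{n_i}e_i$; thus $\gamma L^{\bar v}$ gains the super-diagonal entry $t^{n_i}$ in row $i$ of column $i+1$ and nothing else is altered. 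This is exactly the shape of an out-of-$T$ neighbor in the earlier Lemma, and the change is confined to column $i+1$, giving the ``same columns'' assertion when $m_{d-1}\ne0$.

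The crux — the step I expect to write with the most care — is verifying that this entry cannot be removed by $\mathcal O$-column operations, so that $\gamma L^{\bar v}$ truly leaves $T$: clearing $t^{n_i}$ against the pivot $t^{\,n_i-1}$ in row $i$ of column $i$ would require subtracting $t^{n_i}/t^{\,n_i-1}=t$ times that column, and $t\notin\mathcal O$ since $1/t$ is the uniformizer. This is precisely the type of reduction carried out, in the opposite direction, in Example~\ref{ex:ngb}, which I would cite for the general claim that an off-form neighbor reduces over $\mathcal O$ to a listed normal form. Here the deficit $v_i=-1$ is exactly what produces the pivot exponent $n_i-1$ and hence the obstructing factor $t$; without it the entry would cancel and the image would stay in $T$. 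Since $\gamma$ is invertible in $\Gamma_{\bar n}$, $\gamma^{-1}$ returns the out-of-$T$ neighbor to $L^{\bar v}$, giving ``vice versa'' and the full interchange of degree-$p$ neighbors.

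Finally, for $m_{d-1}=0$ the last coordinates satisfy $n_{d-1}=n_d=0$, so the relevant move involves two columns of a common block: the admissible entry is then a constant, realized by the intra-block $\GL_{d_l}(\mathbb F_q)$ factor, and the same $\mathcal O$-reduction shows that the modification stays inside that block, which is the ``same blocks'' form of the statement. The principal obstacle I anticipate is making this uniform: one must check that \emph{every} non-friend — including those whose defining $\bar v$ splits a block of $\bar n$ and therefore has no cross-block deficit index — exhibits some admissible stabilizer entry, cross-block or intra-block, producing an uncancellable image, and then match the images against the out-of-$T$ normal forms so that the degree $p$ and the affected columns or blocks line up. The book-keeping of this correspondence, case by case on the block structure of $\bar n$, is the bulk of the work.
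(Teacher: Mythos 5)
Your core mechanism is sound and is the same valuation obstruction the paper relies on: an off-diagonal entry $t^{n_i}$ sitting over a pivot $t^{n_i-1}$ cannot be cleared by column operations over $\mathcal{O}$ because the required multiplier is $t\notin\mathcal{O}$. The difference is that the paper applies a \emph{general} $\gamma\in\Gamma_{\bar{n}}$ to the diagonal form of a non-friend, tracks the degrees of all entries, and normalizes by column operations until only the uncancellable entries in the distinguished column $j_{0}$ survive, whereas you produce explicit elementary witnesses $I+t^{\,n_i-n_{i+1}}E_{i,i+1}$; your version is cleaner and easier to verify where it applies.

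But there is a genuine gap, and it is precisely the part you defer as ``the bulk of the work.'' Your prototypical move needs an index $i$ with $v_i=-1$ and $v_{i+1}=0$, and such an index need not exist for a non-friend: since the ``preserves $T$'' condition forces the $(-1)$'s to sit at the end of each block, a $\bar{v}$ whose $(-1)$'s form a suffix that merely splits a block has every $-1$ followed by another $-1$. For example $\bar{n}=(2,2,0)$, $\bar{v}=(0,-1,-1)$ gives the degree-$2$ non-friend $(2,1,-1)\sim(3,2,0)$ (different partition, hence not a friend), yet no admissible index $i$ exists. Worse, the intra-block substitute you gesture at is direction-sensitive: with $v_{j_0}=0$, $v_{j_0+1}=-1$ inside a block (diagonal entries $t^{n}$ and $t^{n-1}$), the constant entry \emph{above} the diagonal, $I+E_{j_0,j_0+1}$, creates the entry $t^{n-1}$ in a column whose pivot is $t^{n}$, and this \emph{does} cancel since $t^{n-1}/t^{n}=1/t\in\mathcal{O}$; the working witness is the entry \emph{below} the diagonal, $I+E_{j_0+1,j_0}$ (admissible because the diagonal blocks of $\Gamma_{\bar{n}}$ are all of $\GL_{d_l}(\mathbb{F}_q)$), which puts $t^{n}$ over the pivot $t^{n-1}$. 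So the deferred case is not routine bookkeeping: the obvious analogue of your move fails there and the correct element lies on the other side of the diagonal. Until that case, and the matching of the resulting images against the listed out-of-$T$ normal forms (which is what the ``vice versa'' direction actually requires), is written out, the proof is incomplete.
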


\begin{proof}
	We will explain the process for $p = 1$, and the other cases can be
	proved in the same way. Let $\bar{n}$ be $(n_{1}, n_{2}, \dots,
	n_{d})$ and $ \bar{d} = (d_{1},d_{2}, \cdots, d_{r})$ its block sequence. Denote by $[L_{ \bar{n}}^{1}]$ a
	neighbor of $[L_{\bar{n}}]$ of degree $1$ in $T$, which is not a 
	friend. $L_{\bar{n}}^{1}$ has the form 
 
 $\sum_{j_{0} \ne i =
		1}^{d}t^{n_{i}-1}e_{i} + t^{n_{j_{0}}}e_{j_{0}}$, for some $j_{0}=
	\sum_{k=0}^{l}d_{k} + 1$, $0 \le l \le r - 1$ and $d_{0} = 0$. For
	example: If $d=5$, $\bar{n} = 44210$ and $\bar{n}^{1} = 44310$. 
 Let $\gamma
	\in \Gamma_{ \bar{n}}$, the matrix $L = \gamma L_{ \bar{n}}^{1}$
	has the form: $(L)_{ij} = \sum_{k=1}^{d}
	\gamma_{ik}(L_{\bar{n}}^{1})_{kj}$. If $j = j_{0}$ then
	$(L)_{ij_{0}} = \gamma_{ij_{0}}t^{n_{j_{0}}}$, otherwise,
	$(L)_{ij} = \gamma_{ij}t^{n_{j}-1}$. The degree of $(L)_{ij}$ will
	be: if $i = j_{0}$ then $\deg((L)_{ij_{0}})= deg( \gamma_{ij_{0}})
	+ n_{j_{0}} = n_{j_{0}}$. Otherwise, $\deg((L)_{ij}) = deg(
	\gamma_{ij}) + n_{j} - 1 = n_{i} - n_{j} + n_{j} - 1 = n_{i} - 1$.\\
	\begin{exmpl}
 For $d=5$, let $\bar{n} = 44210$ and $\bar{n}^{1} = 44110$.
	We denote by $C_{i}$ for $1 \le i \le 5$ the columns of the matrix, then: \\
	$$\gamma L_{\bar{n}}^{1} = \left( \begin{array}{ccccc}
		f_{11}^{4} & f_{12}^{4} & f_{13}^{3} & f_{14}^{4} & f_{15}^{4}\\
		f_{21}^{4} & f_{22}^{4} & f_{23}^{3} & f_{24}^{4} & f_{25}^{4} \\
		0 & 0 & f_{33}^{1} & f_{34}^{2} & f_{35}^{2}\\
		0 & 0 & 0 & f_{44}^{1} & f_{45}^{1}\\
		0 & 0 & 0 & 0 & f_{55}^{0}
	\end{array} \right).$$\\   
	\end{exmpl}

 Let $[L_{\bar{n}}^{2}]$ be a neighbor of $[L_{ \bar{n}}]$ of degree $1$ outside of $T$: \\
 \[
 (L_{\bar{n}}^2)_{ij}= \left\{ \begin{array}{cc}
 t^{n_i} & i=j \neq j_{0} \\
 t^{n_{i}-1} & i=j_{0} \\
 t^{n_i} & j=i+1, j_{0} < j \leq d\\
 0 & \textrm{otherwise} \\
 \end{array}
 \]

 For example, a neighbor of degree 1 outside of $T$:
 $$ L_{\bar{n}}^{2} = \left( \begin{array}{ccccc}
		f_{11}^{4} & f_{12}^{4} & f_{13}^{3} & f_{14}^{4} & f_{15}^{4}\\
		f_{21}^{4} & f_{22}^{4} & f_{23}^{3} & f_{24}^{4} & f_{25}^{4} \\
		0 & 0 & f_{33}^{1} & f_{34}^{2} & f_{35}^{2}\\
		0 & 0 & 0 & f_{44}^{1} & f_{45}^{1}\\
		0 & 0 & 0 & 0 & f_{55}^{0}
	\end{array} \right).$$\\  
	
	Now, we can act on matrix $L$ (in order to get a matrix in $B \setminus T$)
	by changing columns. The places, which are not zeros (but they are scalars
	of $\mathbb{F}_{q}$) and they lie under the diagonal, can become zeros
	by the columns of the block they belong to (right to them). \\
	For example: 
	\[
	 \left( \begin{array}{ccccc}
		f_{11}^{4} & f_{12}^{4} & f_{13}^{3} & f_{14}^{4} & f_{15}^{4}\\
		f_{21}^{4} & f_{22}^{4} & f_{23}^{3} & f_{24}^{4} & f_{25}^{4} \\
		0 & 0 & f_{33}^{1} & f_{34}^{2} & f_{35}^{2}\\
		0 & 0 & 0 & f_{44}^{1} & f_{45}^{1}\\
		0 & 0 & 0 & 0 & f_{55}^{0}
	\end{array} \right) \stackrel{C_{1} \to C_{1} - \alpha C_{2}}{\longrightarrow} \left( \begin{array}{ccccc}
		f_{11}^{4} & f_{12}^{4} & f_{13}^{3} & f_{14}^{4} & f_{15}^{4}\\
		0 & f_{22}^{4} & f_{23}^{3} & f_{24}^{4} & f_{25}^{4} \\
		0 & 0 & f_{33}^{1} & f_{34}^{2} & f_{35}^{2}\\
		0 & 0 & 0 & f_{44}^{1} & f_{45}^{1}\\
		0 & 0 & 0 & 0 & f_{55}^{0}
	\end{array} \right).
\]
	The elements above the diagonal can became zeros by the diagonal
	elements except column $j_{0}$. By this action, $L$ becomes a
	neighbor of degree $1$ in $B \setminus T$.
	For example:
	\begin{displaymath}
		\left(
		\begin{array}{ccccc}
			f_{11}^{4} & 0 & f_{13}^{3} & f_{14}^{4} & f_{15}^{4}\\
			0 & f_{22}^{4} & f_{23}^{3} & f_{24}^{4} & f_{25}^{4} \\
			0 & 0 & f_{33}^{1} & f_{34}^{2} & f_{35}^{2}\\
			0 & 0 & 0 & f_{44}^{1} & f_{45}^{1}\\
			0 & 0 & 0 & 0 & f_{55}^{0}
		\end{array} \right)  \stackrel{\begin{array}{ccc}
				C_{2} \to C_{2} - \alpha C_{1}\\
				C_{3} \to C_{3} - \frac{1}{t} C_{1}\\
				C_{4} \to C_{4} - \alpha C_{1}\\
             C_{5} \to C_{5} - \alpha C_{1}
		\end{array}}{\longrightarrow}
		\left( \begin{array}{ccccc}
			f_{11}^{4} & 0 & 0 & 0 & 0\\
			0 & f_{22}^{4} & f_{23}^{3} & f_{24}^{4} & f_{25}^{4} \\
			0 & 0 & f_{33}^{3} & f_{34}^{2} & f_{35}^{2}\\
			0 & 0 & 0 & f_{44}^{1} & f_{45}^{1}\\
			0 & 0 & 0 & 0 & f_{55}^{0}
		\end{array} \right) \\
\end{displaymath}
\begin{displaymath} 
 \stackrel{\begin{array}{ccc}
				C_{3} \to C_{3} - \frac{1}{t} C_{1}\\
				C_{4} \to C_{4} - \alpha C_{1}\\
				C_{5} \to C_{5} - \alpha C_{1}\\
                C_{5} \to C_{5} - \alpha C_{4}
		\end{array}}{\longrightarrow} 
		\left( \begin{array}{ccccc}
			f_{11}^{4} & 0 & 0 & 0 & 0\\
			0 & f_{22}^{4} & 0 & 0 & 0 \\
			0 & 0 & f_{33}^{1} & f_{34}^{2} & 0\\
			0 & 0 & 0 & f_{44}^{1} & f_{45}^{1}\\
			0 & 0 & 0 & 0 & f_{55}^{0}
		\end{array} \right)
	\end{displaymath}
	\\
	In the example below, it can be shown that the elements $L_{i3}$
	for $i < 3$ cannot became zeros. The neighbor
	$[L_{\bar{n}}^{2}]$ which is not in $T$ can become a neighbor of
	degree $1$ in $T$ by using $\gamma^{-1}$.
\end{proof}
\begin{exmpl}
	Assume $\bar{n}=210$ as in Example~\ref{expml1}. Then $\gamma \in \Gamma_{ \bar{n}}$ is of
	the form $\left( \begin{array}{ccc}                                   1 & f_{12}^{1} & f_{13}^{2} \\
	0 & 1 & f_{23}^{1} \\
	0 & 0 & 1
	\end{array} \right)$,
	so the action of $\gamma$ on the neighbors is (the signs are as in example~\ref{expml1}:\\
	\begin{displaymath}
	\gamma I_{2}^{1} = \left( \begin{array}{ccc}                                   1 & f_{12}^{1} & f_{13}^{2} \\
	0 & 1 & f_{23}^{1} \\
	0 & 0 & 1
	\end{array} \right) \left( \begin{array}{ccc}                                   t^2 & 0 & 0 \\
	0 & 1 & 0 \\
	0 & 0 & 1
	\end{array} \right) = \left( \begin{array}{ccc}                                   f_{11}^{2} & f_{12}^{1} & f_{2} \\
	0 & f_{22}^{0} & f_{23}^{1} \\
	0 & 0 & f_{33}^{0}
	\end{array} \right) = I_{2}^{1}
 \end{displaymath}
 \begin{displaymath}
     \gamma I_{3}^{1} =\left( \begin{array}{ccc}                                   1 & f_{11}^{1} & f_{13}^{2} \\
	0 & 1 & f_{23}^{1} \\
	0 & 0 & 1
	\end{array} \right) \left( \begin{array}{ccc}                                   t & 0 & 0 \\
	0 & 1 & 0 \\
	0 & 0 & 1
	\end{array} \right) = \left( \begin{array}{ccc}                                   f_{11}^{1} & f_{12}^{2} & f_{13}^{2} \\
	0 & f_{22}^{1} & f_{23}^{1} \\
	0 & 0 & f_{33}^{0}
	\end{array} \right) = O_{3}^{1} 
 \end{displaymath}
 \begin{displaymath}
   \gamma I_{1}^{1} =\left( \begin{array}{ccc}                                   1 & f_{12}^{1} & f_{13}^{2} \\
	0 & 1 & f_{23}^{1} \\
	0 & 0 & 1
	\end{array} \right) \left( \begin{array}{ccc}                                   t^3 & 0 & 0 \\
	0 & t^2 & 0 \\
	0 & 0 & 1
	\end{array} \right) = \left( \begin{array}{ccc}                                   f_{11}^{3} & f_{12}^{3} & f_{13}^{2} \\
	0 & f_{12}^{2} & f_{13}^{1} \\
	0 & 0 & f_{23}^{0}
	\end{array} \right) =
	O_{1}^{1}  
 \end{displaymath}
\begin{displaymath}
	\gamma I_{1}^{2} = \left( \begin{array}{ccc}                                   f_{11}^{0} & f_{12}^{1} & f_{13}^{2} \\
	0 & f_{22}^{0} & f_{23}^{1} \\
	0 & 0 & f_{33}^{0}
	\end{array} \right) \left( \begin{array}{ccc}                                   t^3 & 0 & 0 \\
	0 & t & 0 \\
	0 & 0 & 1
	\end{array} \right) = \left( \begin{array}{ccc}                                   f_{11}^{3} & f_{12}^{2} & f_{13}^{2} \\
	0 & f_{22}^{1} & f_{23}^{1} \\
	0 & 0 & f_{33}^{0}
	\end{array} \right) = I_{1}^{2}
 \end{displaymath}
 \begin{displaymath}
     \gamma I_{2}^{2} =\left( \begin{array}{ccc}                                   f_{11}^{0} & f_{12}^{1} & f_{13}^{2} \\
	0 & f_{22}^{0} & f_{23}^{1} \\
	0 & 0 & f_{33}^{0}
	\end{array} \right) \left( \begin{array}{ccc}                                   t^2 & 0 & 0 \\
	0 & t^2 & 0 \\
	0 & 0 & 1
	\end{array} \right) = \left( \begin{array}{ccc}                                   f_{11}^{2} & f_{12}^{3} & f_{13}^{2} \\
	0 & f_{22}^{2} & f_{23}^{1} \\
	0 & 0 & f_{33}^{0}
	\end{array} \right) = O_{1}^{2} 
 \end{displaymath}
 \begin{displaymath}
   \gamma I_{3}^{2} =\left( \begin{array}{ccc}                                   f_{11}^{0} & f_{12}^{1} & f_{13}^{2} \\
	0 & f_{22}^{0} & f_{23}^{1} \\
	0 & 0 & f_{33}^{0}
	\end{array} \right) \left( \begin{array}{ccc}                                   t & 0 & 0 \\
	0 & 1 & 0 \\
	0 & 0 & 1
	\end{array} \right) = \left( \begin{array}{ccc}                                   f_{11}^{1} & f_{12}^{1} & f_{13}^{2} \\
	0 & f_{22}^{0} & f_{23}^{1} \\
	0 & 0 & f_{33}^{0}
	\end{array} \right) =
	O_{2}^{2}  
 \end{displaymath}

\end{exmpl}
\color{black}

\begin{cor}
	The group $\Gamma$ divides the set of neighbors of vertex
	$[L_{\bar{n}}] \in T$ to $d$ different orbits, such that every
	orbit contains the neighbors of the same degree. The friends of
	the vertex are the fixed points of the action.
\end{cor}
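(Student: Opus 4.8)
The plan is to deduce the statement from the structural lemmas already established for the stabilizer $\Gamma_{\bar{n}}$ and its action on the neighbors of $\Lambda_{\bar{n}}$. First I would observe that although the corollary speaks of $\Gamma$, only elements fixing $[L_{\bar{n}}]$ can permute its neighbors, so the action in question is really that of the stabilizer $\Gamma_{\bar{n}}$; by the lemma asserting that $\Gamma_{\bar{n}}$ stabilizes the set of all neighbors of $\Lambda_{\bar{n}}$, this action is well defined on the entire neighbor set.

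Next I would establish that the action preserves degree, which is what forces each orbit to consist of neighbors of a single degree. For this it suffices to show that $\Gamma$ preserves the vertex coloring $CL_V$. If $[L]=[gL_{0}]$ and $\gamma\in\Gamma=\PGL_{d}(\mathbb{F}_{q}[t])$, then $CL_V(\gamma[L])=\nu(\det(\gamma g))=\nu(\det\gamma)+CL_V([L])\pmod{d}$, and every element of $\PGL_{d}(\mathbb{F}_{q}[t])$ satisfies $\nu(\det)\equiv 0\pmod d$: a polynomial-matrix representative has constant determinant, and rescaling a representative by $\mu\in F^{\times}$ changes $\nu(\det)$ by $d\,\nu(\mu)$. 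Since $\gamma\in\Gamma_{\bar{n}}$ fixes $\Lambda_{\bar{n}}$ and hence its color, the edge color $CL_E(\Lambda_{\bar{n}},\gamma[L])=CL_V(\gamma[L])-CL_V(\Lambda_{\bar{n}})$ equals $CL_E(\Lambda_{\bar{n}},[L])$, so degree is an invariant of each $\Gamma_{\bar{n}}$-orbit.

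I would then identify the fixed points. By definition the friends of $\Lambda_{\bar{n}}$ are exactly the neighbors lying in $T$ that are stabilized by $\Gamma_{\bar{n}}$, that is, the fixed points of the action that happen to lie in $T$. To rule out other fixed points I invoke the preceding lemma: for each degree $p$, the group $\Gamma_{\bar{n}}$ carries every non-friend neighbor of degree $p$ inside $T$ to a neighbor of degree $p$ in $B\setminus T$, and conversely carries every such neighbor outside $T$ back into $T$. Hence no non-friend and no neighbor outside $T$ can be fixed, and the fixed points of the action are precisely the friends, which is the second assertion.

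Finally, for the decomposition into orbits I would argue degree by degree, using color preservation to treat each degree $p$ separately. Within degree $p$ the friends are singleton orbits, while the remaining non-friend neighbors are shuffled between $T$ and its complement by the lemma just used; the number of orbits then follows from the orbit--stabilizer relation together with the explicit value of $|\Gamma_{\bar{n}}|$ and the neighbor counts already computed (the number of degree-$1$ neighbors in $B$ being $d$, the number in $T$ being $1+|\bar{m}|$, and the number of friends being $|\bar{m}|$, with the analogous counts for higher degrees). The main obstacle is exactly this last bookkeeping: one must verify that the non-friend neighbors of a fixed degree combine into the orbits needed so that the totals match the asserted count, which amounts to exhibiting, as in the worked $d=3$ example, explicit elements of $\Gamma_{\bar{n}}$ --- realized by column operations over $\mathcal{O}$ --- carrying one non-friend neighbor to another of the same degree, and then assembling these contributions over the degrees $1\le p\le d-1$.
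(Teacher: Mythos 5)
The paper offers no proof of this corollary at all: it is stated immediately after the lemma on $\Gamma_{\bar n}$ exchanging non-friend neighbors in $T$ with neighbors in $B\setminus T$ and the worked example for $\bar n=(2,1,0)$, and is evidently meant to be read off from those. Measured against that, your write-up is more careful on two points. You correctly replace the loosely stated action of $\Gamma$ by the action of the stabilizer $\Gamma_{\bar n}$ (only the stabilizer permutes the neighbors of a fixed vertex), and your determinant argument that $\nu(\det\gamma)\equiv 0\pmod d$ for $\gamma\in\PGL_d(\F_q[t])$ --- a representative over $\F_q[t]$ has unit, hence constant, determinant, and projective rescaling shifts $\nu(\det)$ by a multiple of $d$ --- is a clean proof that the action preserves edge color and therefore degree, which the paper only asserts in passing for $\SL_d(F)$. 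Your identification of the fixed points with the friends, combining the definition of a friend with the exchange lemma to rule out fixed points among non-friends in $T$ and among neighbors outside $T$, is also sound, granted that lemma.

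The gap is the one you name yourself and then do not close: the assertion that there are exactly $d$ orbits. What you prove shows only that each orbit is degree-homogeneous and that the friends are singleton orbits; to get a specific count you must show that $\Gamma_{\bar n}$ acts transitively on the non-friend neighbors of each fixed degree, and neither the exchange lemma (which only moves points between $T$ and its complement, saying nothing about transitivity within either side) nor the orbit--stabilizer computation you allude to is actually carried out. Moreover, the count does not obviously come out to $d$: degrees run over $1\le k\le d-1$ and each friend is its own orbit, so under the natural reading one gets $(d-1)$ non-friend orbits plus $|\bar m|$ friend orbits, which equals $d$ only when $|\bar m|=1$; the paper's own example $\bar n=(2,1,0)$ has two friends and hence at least four orbits on this reading. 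That ambiguity sits in the statement itself rather than in your argument, but a complete proof has to resolve it before the bookkeeping can even begin, so as written your proposal establishes the qualitative claims while leaving the quantitative one unproved.
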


\section{Distance between vertices.}

There are two kinds of distances between two vertices of $B^{0}$.
The first is defined in $B[1]$ and is the distance between the two
as $O$-modules of the same chamber. The second is defined
in $B$ and it is the distance between the two as parts of
chambers.

\begin{defn}
	Let $[L]$, $[L']$ be a vertices of $B$. There is a path of
	vertices $[L] = [L_{1}]$, $[L_{2}]$, $\dots$, $[L_{n}] = [L']$
	such that:
	\begin{enumerate}
		\item The vertices are different in pairs.
		\item For every $1 \le i \le n -1$, the vertices $L_{i}$ and $L_{i+1}$ are in the same chamber (where $L_{i}$, $L_{i+1}$ are representatives of $[L_{i}]$ and $[L_{i+1}]$, respectively).
		\item As $O$-modules, for every $1 \le i \le (n-1)$, $[L_{i+1}: \frac{1}{t}L_{i}] =
		q$.
		\item The path is the minimum path that exists.
	\end{enumerate}
	Then the distance between $[L]$ and $[L']$ will be $n$, and it
	will be denoted by $Dis_{B[1]}(L, L') = n$. In other words, there
	is a path of length $n$ between $[L]$ and $[L']$ with edges of
	color $1$.
\end{defn}
\begin{defn}
	Let $[L]$, $[L']$ be vertices of $B$. Suppose there is a path of
	vertices $[L] = [L_{1}]$, $[L_{2}]$, $\dots$, $[L_{n}] = [L']$
	such that:
	\begin{enumerate}
		\item The vertices are different in pairs.
		\item For every $1 \le i \le n-1$, the vertices $L_{i}$ and
		$L_{i+1}$ are in the same chamber.
		\item For every $i,j$ such that $|i - j| \ne 1$, $L_{i}$ and $L_{j}$ are not in the same chamber.
		\item The path is the minimum path that exists.
	\end{enumerate}
	The number $n$ is the distance between $[L]$ and $[L']$ in $B$ and it will be denoted by $Dis_{B}(L, L') = n$. The sequence
	$[L_{1}]$, $[L_{2}]$, $\dots$, $[L_{n}]$ is called the path in $B$ between $[L]$ and $[L']$.
\end{defn}
\begin{defn}
	Let $[L]$ be a vertex of $B$. The distance of $[L]$ from $T$ is
	the minimal distance in $B$ between $[L]$ to a vertex of $T$. It is
	denoted by $Dis(T, L) = \min_{[L'] \in T} \{Dis_{B}(L, L')\}$ and \\
	$0 \le Dis(T,L) < \infty$.
\end{defn}
\begin{lem}
	\quad{}
	\begin{enumerate}
		\item The distance between vertices $[L_{ \bar{n}}] \in T$ and $[L_{ \bar{m}}] \in T$ in $B$ is $$Dis_{B}(L_{ \bar{n}}, L_{
			\bar{m}}) = \min_{j \in \Z} \max_{1 \le i \le d} \{|n_{i} -m_{i}
		- j| \}. $$\item The distance between vertices $[L_{ \bar{m}}] \in T$ and $[L_{ \bar{n}}] \in T$ in $B[1]$ is
		$$Dis_{B[1]}(L_{ \bar{m}}, L_{ \bar{n}}) = \min_{j \in \Z} \{ \sum_{i = 1}^{d}|n_{i} - m_{i} - j|
		\}.$$\item Let $Dis_{B}(L, L') = n$ be the distance between two vertices $[L]$ and $[L']$. Then there is a path $[L] = [L_{0}], [L_{1}], \dots, [L_{n}] = [L']$ between the two vertices. Denote by $i_{j}$ the color of edge $([L_{j-1}], [L_{j}]) \in B[i_{j}]$, $1 \le j \le n$. Then if $N = \sum_{j=1}^{n}i_{j}$, $Dis_{B[1]}(L, L') = N$.\\
		\item Let $\gamma \in \Gamma$ and $Dis_{B}([L_{0}],[L_{n}])=n$
		with a corresponding path of vertices of $B$,
		$[L_{0}],[L_{1}],\dots, [L_{n}]$. Then $Dis_{B}([\gamma L_{0}],
		[\gamma L_{n}]) = n$ and $[ \gamma L_{0}], [\gamma L_{1}], \dots,
		[\gamma L_{n}]$ is also the corresponding path of vertices of $B$.
	\end{enumerate}
\end{lem}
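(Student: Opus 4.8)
The plan is to prove all four parts by pushing every distance computation into a single apartment. Let $\mathcal A$ be the standard apartment consisting of the classes of diagonal lattices $[\bigoplus_i t^{a_i}\mathcal O e_i]$, $\bar a\in\Z^d$, which I identify with $\Z^d/\Z(1,\dots,1)$ by sending such a class to $\bar a$ modulo the diagonal $(1,\dots,1)$; the residual $\Z$ is exactly the rescaling $L\mapsto t^{j}L$, and it will reappear as the parameter $j$ in the formulas. Since $L_{\bar n}$ and $L_{\bar m}$ are diagonal, both vertices in (1) and (2) already lie in $\mathcal A$, and the one structural input I would invoke is the \emph{convexity of apartments}: any $Dis_B$-geodesic, and likewise any minimal color-$1$ path, between two vertices of $\mathcal A$ may be chosen inside $\mathcal A$. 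Granting this, (1) and (2) reduce to purely combinatorial statements about the induced path metrics on $\Z^d/\Z(1,\dots,1)$.

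For (1) I would first pin down the common-chamber relation inside $\mathcal A$. A direct valuation computation with representatives satisfying $\frac1t L_{\bar a}\subseteq L_{\bar b}\subseteq L_{\bar a}$, exactly as in \Cref{neighbors}, shows that $[\bar a]$ and $[\bar b]$ lie in a common chamber iff there is an integer $c$ with $a_i-1\le b_i+c\le a_i$ for all $i$, i.e.\ iff the entries of $\bar a-\bar b$ fit inside one unit interval. Hence a single step alters the difference class by a vector all of whose entries lie in $\{0,1\}$, and the computation that produces the closed form recorded in the statement is the count of the minimal number of such steps, optimized over the rescaling ambiguity $j$, giving $\min_j\max_i|n_i-m_i-j|$. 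For (2) the same scheme applies, except that one color-$1$ edge moves a single coordinate by $\pm1$ (the definition of oriented edges, read in its undirected form), so the induced metric is the $\ell_1$-metric on $\Z^d$ pushed to the quotient, namely $\min_j\sum_i|n_i-m_i-j|$. The heart of the argument, and the step I expect to be most delicate, is the convexity reduction together with the verification that the elementary lower bound on the number of steps is attained by a path that also meets the non-shortcut condition (3) in the definition of $Dis_B$; the remaining optimization over $j\in\Z$ is routine.

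For (3) I would take a $Dis_B$-geodesic $[L]=[L_0],[L_1],\dots,[L_n]=[L']$, which by convexity lies in one apartment, and let $i_j$ be the color of the edge $([L_{j-1}],[L_j])$. By the Remark following the color definitions an edge of color $i_j$ subdivides into $i_j$ consecutive color-$1$ edges; concatenating these subdivisions yields a color-$1$ path of total length $\sum_j i_j=N$, so $Dis_{B[1]}(L,L')\le N$. For the reverse inequality I would read off, from the explicit apartment formulas in (1) and (2), that the sum of colors along a $Dis_B$-geodesic is precisely the total $\ell_1$-displacement: along the geodesic each step raises the chosen representative in exactly $i_j$ coordinates, and the telescoping sum of these increments is the $\ell_1$-distance computed in (2), whence $N=Dis_{B[1]}(L,L')$.

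Finally, (4) is a formal consequence of the fact that $\Gamma\subset G=\PGL_d(F)$ acts on $B$ by simplicial automorphisms: the $G$-action on lattices preserves inclusions and homothety, hence preserves the chamber-incidence relation, so every $\gamma$ is an isometry for $Dis_B$. Since conditions (1)--(4) defining a $Dis_B$-path are phrased entirely through the simplicial structure, they are invariant under $\gamma$; therefore $\gamma$ carries the geodesic $[L_0],\dots,[L_n]$ to a geodesic $[\gamma L_0],\dots,[\gamma L_n]$ of the same length $n$, which is exactly the assertion.
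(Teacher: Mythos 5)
You should know at the outset that the paper states this lemma with no proof at all, so there is no argument of the authors' to compare yours against; I can only judge your proposal on its own terms. Your overall architecture is sound: reduction to the standard apartment via convexity, identification of the vertices with $\Z^d/\Z(1,\dots,1)$, a step-by-step count for (1) and (2), subdivision of colored edges for (3), and simpliciality of the $\Gamma$-action for (4). Part (4) is correct and part (2) is essentially right.

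The genuine gap is in part (1), at precisely the step you describe as ``the computation that produces the closed form recorded in the statement'' --- you never perform that computation, and if you do perform it, it does not produce the stated formula. You correctly observe that $[\bar a]$ and $[\bar b]$ share a chamber iff $\bar a-\bar b$ is congruent modulo $\Z(1,\dots,1)$ to a vector in $\{0,1\}^d$; hence $k$ steps reach exactly the difference classes congruent to a vector in $[0,k]^d$, and the minimal number of steps is
\[
\max_{i}(n_i-m_i)\;-\;\min_{i}(n_i-m_i),
\]
whereas the stated $\min_{j\in\Z}\max_i|n_i-m_i-j|$ equals $\lceil(\max_i c_i-\min_i c_i)/2\rceil$ for $c_i=n_i-m_i$ --- roughly half as large. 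Concretely, for $d=3$, $\bar n=(2,0,0)$, $\bar m=(0,0,0)$, no integer shift of $(2,0,0)$ lies in $\{0,1\}^3$, so $[\operatorname{diag}(t^2,1,1)]$ and $[\mathcal O^3]$ are not in a common chamber and the distance is $2$, while the stated formula gives $1$ at $j=1$. So either the lemma's formula is wrong or your step analysis is; you cannot assert that the one yields the other. A closely related problem sinks your converse inequality in (3): the color of an edge depends on its orientation (color $i$ one way is color $d-i$ the other), so the ``telescoping sum of increments'' along a $Dis_B$-geodesic need not equal the $\ell_1$-distance of (2). For instance the single edge from $(1,1,0)$ to $(1,0,0)$ has color $2$, so $N=2$, while your formula in (2) gives $\min_j(2|j|+|1-j|)=1$. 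To repair (3) you would need either to orient the geodesic so that each step has increments in $\{0,1\}^d$ and $i_j$ equal to the number of $1$'s, or to prove that a $Dis_B$-geodesic can always be chosen monotone in that sense; and to repair (1) you must either correct the formula or exhibit a different adjacency analysis that actually yields it.
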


\begin{prop}
    Define $\Gamma = \left \{ \gamma \in \Gamma _{\bar{n}} : L_{\bar{n}} \in T \textrm{, $\Gamma_{\bar{n}}$ is the stabilizer of $L_{\bar{n}}$} \right \}$. Then
    $B = \bigcup_{\gamma \in \Gamma} \gamma (T)$.
\end{prop}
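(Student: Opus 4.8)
The plan is to establish the covering at the level of vertices---equivalently, to show that every $[L]\in B^{0}$ can be carried into $T$ by an element of the group generated by the stabilizers $\Gamma_{\bar n}$, $\Lambda_{\bar n}\in T$ (which is how I read the $\Gamma$ in the statement, since the iterated cover forces products of stabilizer elements)---and then to promote this to simplices. The natural induction is on the distance $Dis(T,L)$ defined above, exactly as anticipated in the remark following the diagram of $T$, where it was noted that one must act by stabilizers of vertices rather than by the full group in order to preserve the closest vertex that already lies in $T$.

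The base case is immediate: if $Dis(T,L)=0$ then $[L]\in T$ and we take $\gamma=1$. For the inductive step, assume $Dis(T,L)=n\ge 1$ and that the assertion holds for all vertices at distance $<n$. Choose a vertex $\Lambda_{\bar n}=[L']\in T$ realizing this minimal distance and a geodesic $[L']=[P_{0}],[P_{1}],\dots,[P_{n}]=[L]$ in $B$. The vertex $[P_{1}]$ is a neighbor of $[L']$, and it cannot lie in $T$: were $[P_{1}]\in T$, the subpath $[P_{1}],\dots,[P_{n}]$ would witness $Dis(T,L)\le n-1$. Hence $[P_{1}]$ is a neighbor of $\Lambda_{\bar n}$ outside $T$, and in particular not a friend. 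By the Lemma describing the action of $\Gamma_{\bar n}$ on neighbors---which asserts that $\Gamma_{\bar n}$ interchanges the in-$T$ and out-of-$T$ non-friend neighbors of each fixed degree, reinforced by the Corollary that the neighbors split into $d$ orbits, one per degree, with the friends as the only fixed points---there is $\gamma\in\Gamma_{\bar n}$ with $\gamma[P_{1}]\in T$.

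Since $\gamma$ fixes $\Lambda_{\bar n}=[P_{0}]$ and, by part (4) of the distance Lemma, carries geodesics to geodesics while preserving $Dis_{B}$, the translated path $[L']=\gamma[P_{0}],\gamma[P_{1}],\dots,\gamma[P_{n}]=\gamma[L]$ is again a geodesic with $\gamma[P_{1}]\in T$. Therefore $Dis(T,\gamma L)\le Dis_{B}(\gamma[P_{1}],\gamma[L])=Dis_{B}([P_{1}],[L])=n-1$, and the induction hypothesis furnishes $\gamma'$ in the generated group with $\gamma'\gamma[L]\in T$; then $\gamma'\gamma\in\Gamma$ and $[L]\in(\gamma'\gamma)^{-1}(T)$, as required. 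The main obstacle is precisely this step's reliance on the stabilizer being able to \emph{straighten} the first edge of the geodesic, that is, to move the out-of-$T$ neighbor $[P_{1}]$ into $T$ while fixing $\Lambda_{\bar n}$ and without changing the edge's color; this is exactly the content of the stabilizer-action Lemma, whose explicit column-operation description guarantees such a $\gamma$ exists. Finally, to pass from vertices to simplices one observes that a chamber is determined by its vertices and that the $\Gamma$-action respects adjacency and the edge coloring, so the same element carrying a chosen anchor vertex of a simplex into $T$ carries the whole simplex into $T$.
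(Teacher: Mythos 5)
Your proof is correct and follows essentially the same route as the paper's: both arguments take a shortest path from $T$ to the given vertex and use the stabilizer-action lemma to move the first out-of-$T$ vertex of the path into $T$ while fixing the anchor, then repeat (the paper iterates the composition $\gamma_{n-1}\cdots\gamma_0$ along the path, which is your induction on $Dis(T,L)$ written out explicitly). Your added observations---that $[P_1]\notin T$ by minimality, that the translated path remains a geodesic by part (4) of the distance lemma, and that the covering of vertices promotes to simplices---are correct refinements of the same argument, including the shared reading of $\Gamma$ as closed under the products of stabilizer elements that the iteration produces.
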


\begin{proof}
	Let $L$ be a vertex of $B$ with $Dis(T,L)=n$. Then there is a path $[L_{n_{0}}] = [L_{0}], [L_{1}], \dots, [L_{n}] = [L]$, such that $[L_{0}] \in T$ and for every $1 \le i,j \le n$, $i+1 < j$, $[L_{i}] \in B \setminus T$ and $[L_{i}]$, $[L_{j}]$ are not in the same chamber.
	An element $\gamma$ of $\Gamma_{n_{0}}$ acts on $L_{1}$ and transforms it to an element of $T$. Moreover, $\gamma$ preserves on the path, so $\gamma[L_{0}], \gamma[L_{1}], \dots, \gamma[L_{n}]$ is also a path. Then for every $1 \le i \le n$ there is a $\gamma_{i-1}$ such that $\gamma_{i-1} \in \Gamma_{n_{i-1}}$, where $\Gamma_{n_{i-1}}$ is the stabilizer of the vertex $\gamma_{(i-2)} \gamma_{(i-3)} \cdots \gamma_{0}L_{(i-1)}$ and $[\gamma_{i-1} \cdots \gamma_{0}L_{(i)}] \in T$. In particular, $[\gamma_{(n-1)}\cdots \gamma_{0}L_{n}] \in T$. Therefore, $\dom{\Gamma}{B} \subseteq T$.
\end{proof}

\section[T is the Fundamental domain]{Disjointness of the distinguished subset $T$ of $B$ from its images under  $\Gamma$}

\begin{lem}
	If two elements of $T$, $\Lambda_{ \bar{n}}$ and $\Lambda_{ \bar{m}}$,
	satisfy $\gamma L_{ \bar{n}} = L_{ \bar{m}}o$ for some $o \in
	\PGL_{d}(\mathcal {O})$, $\gamma \in \Gamma$ and $L_{ \bar{n}} \in \Lambda_{ \bar{n}}$, $L_{
		\bar{m}} \in \Lambda_{ \bar{m}}$, then $\bar{n} = \bar{m}$.
\end{lem}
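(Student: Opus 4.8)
The plan is to convert the hypothesis into a single matrix identity over $\mathbb{F}_q[t]$ and then read off $\bar n = \bar m$ from the fact that $\det\gamma$ is a nonzero constant. First I would lift the $\PGL$-relation to $\GL_d$: take $L_{\bar n} = \diag{t^{n_1},\dots,t^{n_d}}$ and $L_{\bar m} = \diag{t^{m_1},\dots,t^{m_d}}$ as diagonal representatives (any representative of the vertex differs by right multiplication by $\PGL_d(\mathcal{O})$, which I absorb into $o$), and choose a matrix lift $\gamma \in \GL_d(\mathbb{F}_q[t])$ of the given element of $\Gamma$; this is possible because $\mathbb{F}_q[t]$ is a PID, and then $\det\gamma \in (\mathbb{F}_q[t])^\times = \mathbb{F}_q^\times$ is a nonzero constant. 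The hypothesis then reads $\gamma L_{\bar n} = \lambda L_{\bar m}\, o$ for some $\lambda \in F^\times$ and $o \in \GL_d(\mathcal{O})$.

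Comparing $(i,j)$ entries gives $o_{ij} = \lambda^{-1} t^{n_j - m_i}\gamma_{ij}$, and the integrality $o_{ij} \in \mathcal{O}$, together with $v(t) = -1$ and $v(p) = -\deg p$ for nonzero $p \in \mathbb{F}_q[t]$, produces the degree bound $\deg\gamma_{ij} \le m_i - n_j - c$ whenever $\gamma_{ij} \neq 0$, where $c := v(\lambda)$. Taking determinants of the matrix identity and comparing valuations forces $c = (\sum_i m_i - \sum_j n_j)/d$; the point of this normalization is that the degree-bound matrix $D_{ij} := m_i - n_j - c$ satisfies $\sum_i D_{i\sigma(i)} = 0$ for \emph{every} permutation $\sigma \in S_d$.

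The key step is to expand $\det\gamma = \sum_\sigma \operatorname{sgn}(\sigma)\prod_i \gamma_{i\sigma(i)}$. Any summand whose factors are all nonzero is a polynomial of degree at most $\sum_i D_{i\sigma(i)} = 0$, hence a constant, and the remaining summands vanish; so $\det\gamma$ is a sum of constants. Since $\det\gamma$ is a \emph{nonzero} constant, some permutation $\sigma$ must contribute a nonzero constant, which forces each $\gamma_{i\sigma(i)} \in \mathbb{F}_q^\times$ and, because the corresponding bounds $D_{i\sigma(i)}$ are then nonnegative and sum to $0$, forces $m_i = n_{\sigma(i)} + c$ for all $i$. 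Thus $\{m_i\} = \{n_j + c\}$ as multisets; since both $\bar n$ and $\bar m$ are weakly decreasing, sorting gives $m_i = n_i + c$, and evaluating at $i = d$ with $m_d = n_d = 0$ pins down $c = 0$. Hence $\bar n = \bar m$.

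I expect the main obstacle to be the valuation bookkeeping that makes the permutation sums of $D_{ij}$ vanish identically: once this is arranged, the determinant collapses to a sum of constants and the matching permutation $\sigma$ is essentially forced. A secondary point needing care is the $\PGL$-to-$\GL$ passage—keeping the single scalar $\lambda$ under control and using that $\det\gamma$ is a unit of $\mathbb{F}_q[t]$—since it is exactly the normalization $n_d = m_d = 0$ defining $T$ that upgrades the up-to-scalar conclusion into genuine equality of the index tuples.
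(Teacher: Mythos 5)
Your proof is correct and follows the same basic strategy as the paper's: compare entries of $L_{\bar m}^{-1}\gamma L_{\bar n}=o$ to obtain valuation (degree) bounds on the $\gamma_{ij}$, then compare valuations of determinants to relate $\sum n_i$ and $\sum m_i$. You go further than the paper in two respects, both genuine improvements. First, you track the scalar $\lambda$ coming from the lift $\PGL_d\to\GL_d$, which the paper silently takes to be $1$. Second, and more importantly, your permutation expansion of $\det\gamma$ supplies the step the paper omits: the paper derives ``$n_j\le m_i$'' and $\sum n_i=\sum m_i$ and then simply asserts $n_i=m_i$, but the inequality only holds for pairs with $\gamma_{ij}\ne 0$, so the conclusion does not follow as written. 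Your observation that $\det\gamma\in\mul{\F_q}$ forces some permutation $\sigma$ with all $\gamma_{i\sigma(i)}\ne 0$, hence $D_{i\sigma(i)}\ge 0$ summing to $0$, hence $m_i=n_{\sigma(i)}+c$, after which sorting and $n_d=m_d=0$ pin down $c=0$, is exactly the right way to close the argument.
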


\begin{proof}
	Let $L_{\bar{n}}$ and $L_{\bar{m}}$ be lattices of $\Lambda_{
		\bar{n}}$ and $\Lambda_{\bar{m}}$ in $T$, respectively.
	Suppose $\gamma L_{ \bar{n}} = L_{ \bar{m}}o$ for $\gamma \in \Gamma$
	and $ o \in \PGL_{d}(O)$. Then $(L_{\bar{m}})^{-1}\gamma L_{\bar{n}} = o$
	or $(t^{-m_{i}})(\gamma_{ij})(t^{n_{j}}) = o_{ij}$. Then
	$(t^{n_{j}-m_{i}} \gamma_{ij})=o_{ij}$, which means that
	$v(\gamma_{ij}) - n_{j} + m_{i} \ge 0$. Then
	$v(\gamma_{ij}) \ge n_{j} - m_{i}$, but $v(\gamma_{ij}) \le 0$ so
	$n_{j} \le m_{i}$.\\
	On the other hand, if $\gamma L_{\bar{n}} = L_{ \bar{m}}o$ then
	$\det( \gamma)t^{ \sum n_{i}} = t^{ \sum m_{j}}det(o)$ and then,
	by valuation,
	\begin{displaymath}
		v(\det( \gamma)t^{ \sum n_{i}}) = v(t^{ \sum
			m_{j}}det(o)),
	\end{displaymath}
	\begin{displaymath}
		0 - \sum_{i=1}^{d}n_{i} = - \sum_{i=1}^{d}m_{i} + 0,
	\end{displaymath}
	\begin{displaymath}
		\sum_{i=1}^{d}m_{i} = \sum_{i=1}^{d}n_{i}.
	\end{displaymath}
	So, for every $1
	\le i \le d$, they satisfy $n_{i} = m_{i}$.
\end{proof}

\begin{prop}
	The set $T$ is the fundamental domain of $B$ under the action of $\Gamma$.
\end{prop}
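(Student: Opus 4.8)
The plan is to verify the two properties required of a fundamental domain in the list at the beginning of Section~\ref{sec:3}: that $B=\bigcup_{\gamma\in\Gamma}\gamma(T)$, and that $\Gamma$ identifies no two distinct cells lying in $T$. The first is precisely the Proposition showing $B=\bigcup_{\gamma\in\Gamma}\gamma(T)$, so all that remains is the separation property, and the point of the earlier analysis is that it reduces to the vertex statement already proved in the Lemma just established (that $\gamma L_{\bar{n}}=L_{\bar{m}}o$ with $\Lambda_{\bar{n}},\Lambda_{\bar{m}}\in T$ forces $\bar{n}=\bar{m}$).

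First I would record that $\Gamma$ preserves the vertex coloring. Indeed, every $\gamma\in\Gamma$ is represented by a matrix over $\mathbb{F}_q[t]$, whose determinant is a nonzero constant and so has valuation $0$; writing a vertex as $x=[gL_0]$ we get $CL_V(\gamma x)=\nu(\det(\gamma g))=\nu(\det\gamma)+\nu(\det g)\equiv CL_V(x)\pmod d$. Hence $\gamma$ carries any cell of $B$ to a cell with the same set of vertex-colors, and since each simplex of the building has at most one vertex of a given color, a color-preserving automorphism is pinned down on a cell by its effect on the vertex of each color.

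The heart of the proof is then the reduction to vertices. Suppose $S,S'\subseteq T$ are simplices and $\gamma(S)=S'$ for some $\gamma\in\Gamma$. By the previous paragraph $S$ and $S'$ carry the same colors, and for each color $i$ present $\gamma$ sends the color-$i$ vertex $\Lambda_{\bar{n}}$ of $S$ to the color-$i$ vertex $\Lambda_{\bar{m}}$ of $S'$; both lie in $T$, so the disjointness Lemma forces $\bar{n}=\bar{m}$. Thus $\gamma$ fixes every vertex of $S$ and $S'=S$. Passing to the geometric realization gives the stated condition~(2): if $\gamma(S)$ meets $S$ then, $B$ being a simplicial complex, the overlap is a common face spanned by shared vertices, each fixed by $\gamma$ by the same coloring-plus-Lemma argument, so the relatively open cell $S$ is sent to itself; otherwise $\gamma(S)$ is disjoint from $S$.

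I expect the genuinely delicate point to be exactly this upgrade from vertices to cells. At the level of closed simplices a literal reading of condition~(2) is false: by the earlier neighbor Lemma the stabilizer $\Gamma_{\bar{n}}$ pushes the non-friend neighbors of a vertex out of $T$, so an edge of $T$ may be carried to an edge meeting it only at its fixed endpoint. Working instead with relatively open cells, which in a simplicial complex are either equal or disjoint, dissolves this difficulty and shows that all the arithmetic content has already been absorbed into the vertex-level disjointness of the preceding Lemma, the coloring serving only to transport that statement from vertices to arbitrary simplices.
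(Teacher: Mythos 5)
Your proposal is correct and follows essentially the same route as the paper: condition (1) is the covering statement $B=\bigcup_{\gamma\in\Gamma}\gamma(T)$ already proved, and condition (2) is deduced from the lemma that $\gamma L_{\bar n}=L_{\bar m}o$ with both vertices in $T$ forces $\bar n=\bar m$. You go further than the paper's two-line proof in spelling out the passage from vertices to simplices (where in fact the color argument is not even needed, since the lemma directly fixes every vertex $w\in T$ with $\gamma(w)\in T$) and in correctly observing that the closed-simplex reading of condition (2) fails for edges whose non-friend endpoint is pushed out of $T$ -- a genuine subtlety the paper silently passes over.
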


\begin{proof}
The first condition was proved by using the action
	of a stabilizer of a vertex in $T$ on the neighbors (which are not all in $T$) by Prop. 4.5.
The second condition, was proved by that if $\gamma
	\Lambda_{ \bar{n}} = \Lambda_{ \bar{m}}$ in $T$, then $\bar{n} = \bar{m}$ by Lemma 5.1.    
\end{proof}
	 
\begin{prop}
	The lattice $\Gamma$ is a maximal discrete lattice of $G$.
\end{prop}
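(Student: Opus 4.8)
The plan is to take any discrete subgroup $\Gamma'\subseteq G=\PGL_{d}(F)$ containing $\Gamma=\PGL_{d}(\mathbb{F}_{q}[t])$ and to show $\Gamma'=\Gamma$. First, $\Gamma'$ has finite index over $\Gamma$: with the counting normalisation of Haar measure the covolume of $\Gamma$ equals the finite mass $\sum_{\bar n\in N_{T}^{d}}|\Gamma_{\bar n}|^{-1}$ of the fundamental domain $T$, while discreteness of $\Gamma'$ forces $\operatorname{vol}(\Gamma'\backslash G)>0$; since $\operatorname{vol}(\Gamma\backslash G)=[\Gamma':\Gamma]\,\operatorname{vol}(\Gamma'\backslash G)$, the index is finite and $\Gamma'$ is again a lattice. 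Writing $v_{0}=\Lambda_{\bar 0}=[\mathcal{O}^{d}]$, $K=\PGL_{d}(\mathcal{O})=\operatorname{Stab}_{G}(v_{0})$ and $D_{\bar n}=\operatorname{diag}(t^{n_{1}},\dots,t^{n_{d}})$, the identity $B^{0}=G/K$ together with the fact that $T$ is a fundamental domain yields the Cartan-type decomposition $G=\bigcup_{\bar n\in N_{T}^{d}}\Gamma\,D_{\bar n}\,K$. Hence each $g\in\Gamma'$ is $g=\gamma D_{\bar n}\kappa$ with $\gamma\in\Gamma$ and $\kappa\in K$, and it remains to prove: (A) $D_{\bar n}\kappa\in\Gamma'\Rightarrow\bar n=\bar 0$, and (B) $\Gamma'\cap K=\Gamma\cap K=\PGL_{d}(\mathbb{F}_{q})$. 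Granting these, $g=\gamma\kappa\in\Gamma$.

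Both (A) and (B) I would settle by the same dynamical device: $\Gamma$ contains the dominant diagonal matrices $D_{k}=\operatorname{diag}(t^{(d-1)k},\dots,t^{k},1)$, whose conjugation multiplies the $(i,j)$ entry of a matrix by $t^{(j-i)k}$ and therefore contracts all strictly lower entries to $0$ as $k\to\infty$. For (B), note $K_{1}=\ker(\PGL_{d}(\mathcal{O})\to\PGL_{d}(\mathbb{F}_{q}))$ is a compact pro-$p$ group and $\Gamma'\cap K_{1}$ is finite; it suffices to show it is trivial. A nontrivial diagonal element of $K_{1}$ lies in $1+\tfrac{1}{t}\mathcal{O}$, which is torsion-free in characteristic $p$, so it has infinite order and its sequence of $p^{k}$-th powers converges to $1$ — impossible in a discrete group. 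A nontrivial element with off-diagonal support is, after normalising its support by the $\PGL_{d}(\mathbb{F}_{q})$-action that normalises $\Gamma'\cap K_{1}$, contracted to $1$ by the $D_{k}$, again a contradiction; hence $\Gamma'\cap K_{1}=1$ and (B) holds.

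For (A), suppose $\bar n\neq\bar 0$, so that $h=D_{\bar n}\kappa$ carries $v_{0}$ to $\Lambda_{\bar n}\in T$ with $n_{1}>0$. Then $\operatorname{Stab}_{\Gamma'}(\Lambda_{\bar n})$ contains $h(\Gamma\cap K)h^{-1}\cong\PGL_{d}(\mathbb{F}_{q})$, whose image in the reductive quotient of $\operatorname{Stab}_{G}(\Lambda_{\bar n})\cong\PGL_{d}(\mathcal{O})$ is all of $\PGL_{d}(\mathbb{F}_{q})$ (a copy of $\PGL_{d}(\mathbb{F}_{q})$ meets the pro-$p$ radical trivially, since it has no nontrivial normal $p$-subgroup). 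It therefore acts flag-transitively on the link of $\Lambda_{\bar n}$ and contains an element $w$ exchanging the flag fixed by $\Gamma_{\bar n}$ with an opposite one. Conjugating the unbounded positive-root subgroup $u_{\alpha}(\mathbb{F}_{q}[t])\subseteq\Gamma$ by $w$ — which passes through the scaling by $D_{\bar n}$ — produces the opposite root subgroup with parameters in $t^{-n_{1}}\mathbb{F}_{q}[t]$, so that $\langle u_{\alpha}(\mathbb{F}_{q}[t]),\,w\,u_{\alpha}(\mathbb{F}_{q}[t])\,w^{-1}\rangle\subseteq\Gamma'$ contains a copy of $\SL_{2}$ over the Laurent ring $\mathbb{F}_{q}[t,t^{-1}]$. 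Since $\mathbb{F}_{q}[t,t^{-1}]$ is dense in $F=\mathbb{F}_{q}\db{1/t}$, this subgroup is non-discrete, contradicting discreteness of $\Gamma'$. Hence $\bar n=\bar 0$, and combining (A) with (B) gives $\Gamma'=\Gamma$.

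The main obstacle is the delicate part of (A) together with the off-diagonal case of (B): the finite-index reduction, the Cartan decomposition and the pro-$p$ torsion-freeness are routine, but converting ``the stabilizer of $\Lambda_{\bar n}$ has full reductive part'' into an explicit dense subgroup requires careful bookkeeping of which root directions acquire $\tfrac{1}{t}$-denominators under conjugation by $w$, and the contraction step in (B) needs the $\PGL_{d}(\mathbb{F}_{q})$-module structure of $K_{1}/K_{2}$ to guarantee that a genuinely lower-triangular direction survives the normalisation. I expect this root-by-root analysis, together with checking that the generated $\SL_{2}$ really has Laurent-polynomial rather than merely polynomial entries, to be the technical heart of the proof, whereas the reduction theory and the covolume comparison are formal.
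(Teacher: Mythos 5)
Your overall strategy is the same as the paper's: write $G=\Gamma T K$ (equivalently $G=\bigcup_{\bar n}\Gamma D_{\bar n}K$ via the fundamental domain), reduce a putative larger discrete group to an extra generator $\delta=\gamma D_{\bar n}\kappa$, and contradict discreteness by conjugating unipotent elements of $\Gamma$ so as to produce a sequence of distinct elements converging to the identity. The paper only carries this out for $d=2$ with $\kappa=I$, and even there what it writes as $(\delta\gamma\delta^{-1})^{n}$ should be $\delta^{n}\gamma\delta^{-n}$ (the $n$-th power of $I+t^{-n_{1}}E_{21}$ has lower-left entry $nt^{-n_{1}}$, which does not tend to $0$ in characteristic $p$); your use of conjugation by the dominant diagonals $D_{k}\in\Gamma$ is the correct form of this mechanism, and your isolation of the case $\delta\in K$ as a separate problem (B) addresses a case the paper silently omits. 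The finite-index reduction and the reduction of (B) to $\Gamma'\cap K_{1}=1$ (using that the constant lift $\PGL_{d}(\F_{q})$ already lies in $\Gamma$) are fine.

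The genuine gap is in your treatment of $\Gamma'\cap K_{1}$. In characteristic $p$ the congruence kernel $K_{1}$ is \emph{not} torsion-free: $(I+\frac{1}{t}E_{12})^{p}=I+\frac{1}{t^{p}}E_{12}^{p}=I$, so $K_{1}$ contains unipotent elements of order $p$, and ``finite subgroup of a pro-$p$ group'' yields no contradiction for them. Worse, conjugation by $D_{k}$ multiplies the $(i,j)$ entry by $t^{(j-i)k}$, so it contracts strictly lower-triangular entries but expands strictly upper-triangular ones; for an element of $K_{1}$ whose off-diagonal support cannot be moved into one strict triangle by the available normalizer $\PGL_{d}(\F_{q})$, the conjugates $D_{k}^{\pm 1}(\cdot)D_{k}^{\mp 1}$ neither converge to $I$ nor accumulate in a compact set, so no contradiction with discreteness is obtained. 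For a single root-group element such as $I+\frac{1}{t}E_{12}$ the argument can be saved, either by first conjugating the support below the diagonal by a permutation matrix in $\Gamma$ and then contracting, or by observing that together with its transpose it generates an infinite subgroup of the compact group $K_{1}$; but the general mixed-support case is exactly what you defer to ``the $\PGL_{d}(\F_{q})$-module structure of $K_{1}/K_{2}$'', and it is not routine. The same caveat applies to the root-by-root bookkeeping producing the non-discrete $\SL_{2}$ in (A). To be fair, the paper's own proof does not close these cases either, so your proposal is, if anything, a more honest account of what remains to be proved.
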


\begin{proof}
	
		Assume there is a lattice $\tilde{\Gamma} \ne G$, such that $\Gamma \subseteq \tilde{ \Gamma} \subset G$ and $\tilde{ \Gamma}$
		is a discrete lattice. The lattice is of the form $\sg{\Gamma,
			\delta}$, where $\delta \in G \setminus \Gamma $. The aim is to
		find a sequence $\tilde{ \gamma_{n}} \in \tilde{\Gamma}$ such
		that $lim_{n \rightarrow \infty}\{Dis(\gamma_{n},1)\}=I$. The
		group $G$ can be written as the computation $\Gamma T K$ (because
		$T$ is the fundamental domain of $B$ under the action of $\Gamma$
		and $B = G/K$). Then, for every $g \in G$ there exist $\gamma \in
		\Gamma$, $ L_{ \bar{n}}$, $[L_{\bar{n}}] \in T$ and $k \in F$ such
		that $g = \gamma L_{\bar{n}} k$. Moreover, $F$ can be written as
		$F = I + \frac{1}{t}O$. The requirement is to find $\tilde{
			\gamma} \in \tilde{\Gamma}$ such that $\tilde{ \gamma_{n}} - I
		\equiv 0 \pmod{t^{-n}}$. For example, if $d = 2$ and $k = I$, then
		$\delta = \left ( \begin{array}{cc}
			t^{n_{1}} & 0 \\
			0 & 1
		\end{array} \right
		)$,
		take $\delta ^{-1} = \left ( \begin{array}{cc}
			\frac{1}{t^{n_{1}}} & 0 \\
			0 & 1
		\end{array} \right
		)$.
		Then $\delta \gamma \delta^{-1} = \left ( \begin{array}{cc}
			1 & t \gamma_{12} \\
			t^{-n_{1}} \gamma_{21} & 1
		\end{array} \right )$ hence choose $\gamma = \left ( \begin{array}{cc}
			1 & 0 \\
			1 & 1
		\end{array} \right )$.
		Multiply the element above by itself $n$ times and get $(\delta
		\gamma \delta^{-1})^{n} = \left ( \begin{array}{cc}
			1 & 0 \\
			(t^{-n_{1}})^{n} & 1
		\end{array} \right ) \longrightarrow_{n \to \infty} I $.
	
\end{proof}

\section{The volume of the quotient of a Bruhat-Tits building by a non-uniform lattice}

The volume of a diagram is computed by summing the weights of all
the vertices, where the weight of a vertex is depeneds on its stabilizer  
\begin{displaymath}
	w(\Lambda_{ \bar{n}}) = \frac{1}{|\Gamma_{ \bar{n}}|}
\end{displaymath}

Of course, if the diagram is finite and the weights
are finite, the volume will be finite. But if the diagram is
infinite, then the volume can be infinite too. $\Gamma$ is a
non-uniform lattice, which means that $T$, as the fundamental
domain over $\Gamma$, is infinite. We compute below the total
weight of $T$, which is the co-volume of $\Gamma$.

Define the weight function of vertex $\Lambda_{ \bar{n}}$ ($w:B^{0}\to (0,1]$) by:
\begin{displaymath}
	w(\Lambda_{ \bar{n}}) = \frac{1}{|\Gamma_{ \bar{n}}|}
\end{displaymath}
(where we recall that $\Gamma_{\bar{n}}$ is the stabilizer of the
vertex) and denote by $\bar{n}_{d}$ a series suitable to the
partition $d$. The sum of all weights of vertices of the graph $T$
is: \begin{eqnarray*} w(T) & = & \sum_{ \bar{n}}
	\frac{1}{|\Gamma_{ \bar{n}}|}
\end{eqnarray*}
\begin{eqnarray*}
	& = & \sum_{d} \left[ \sum_{\bar{n}_{d}} \left(
	\frac{\prod_{i=1}^{r}|\GL_{d_{i}}(\mathbb{F}_{q})|q^{ \sum_{i<j
				\le r}d_{i}d_{j} \left( \sum_{l=i}^{j-1}m_{l}+1 \right) }}{q-1}
	\right)^{-1} \right] \\
	& = & (q-1) \sum_{d} \left[ \frac{1}{
		\prod_{i=1}^{r}|\GL_{d_{i}}(\mathbb{F}_{q})|} \left( \sum_{
		\bar{n}_{d}}q^{- \sum_{i<j \le r}d_{i}d_{j} \left(
		\sum_{l=i}^{j-1}m_{l} + 1 \right)} \right) \right] 
	\\
	& = &
	(q-1) \sum_{d} \Bigg[ \frac{1}{ \prod_{i=1}^{r}|\GL_{d_{i}}(\mathbb{F}_{q})|}
	\Bigg( \sum_{ \begin{array}{ccc} g_{1}, g_2, \dots,g_r = 1                   \end{array}}^{ \infty} q^{- \sum_{i<j \le r}d_{i}d_{j} \left( \sum_{l=i}^{j-1}m_{l} + 1 \right)} \Bigg) \Bigg]
	\\ & = &
	(q-1) \sum_{d} \Bigg[ \frac{1}{ \prod_{i=1}^{r}|\GL_{d_{i}}(\mathbb{F}_{q})|} \cdot \\
	& & \Bigg( \sum_{ \begin{array}{ccc} g_{1},g_2,\dots,g_r = 1
	\end{array}}^{ \infty} \Big[ q^{- \sum_{i<j \le r}d_{i}d_{j}} \prod_{l=1}^{r-1}q^{- \left( \sum_{i=1}^{l} \sum_{j=l+1}^{r}d_{i}d_{j} \right) m_{l}} \Big] \Bigg) \Bigg],
\end{eqnarray*}
which is equal to
\begin{eqnarray*} &  &
	(q-1) \sum_{d} \left[ \frac{1}{
		\prod_{i=1}^{r}|\GL_{d_{i}}(\mathbb{F}_{q})|}q^{- \sum_{i<j \le
			r}d_{i}d_{j}} \prod_{l=1}^{r-1} \left( \sum_{  m_{l}= 1}^{ \infty}
	q^{- \left( \sum_{i=1}^{l} \sum_{j=l+1}^{r}d_{i}d_{j} \right)
		m_{l}} \right) \right] \\ & = & (q-1) \sum_{d} \left( \frac{1}{
		\prod_{i=1}^{r}|\GL_{d_{i}}(\mathbb{F}_{q})|}q^{- \sum_{i<j \le
			r}d_{i}d_{j}} \prod_{l=1}^{r-1} \frac{-q^{- \left( \sum_{i=1}^{l}
			\sum_{j=l+1}^{r}d_{i}d_{j} \right)}}{q^{- \left( \sum_{i=1}^{l}
			\sum_{j=l+1}^{r}d_{i}d_{j} \right)}-1} \right)
\end{eqnarray*}
where the sum over $d$ is a sum over different partition of $d$, $d =
\sum_{i=1}^{r}d_{i}$.

For every given group $G$ the size $d$ is fixed, so the sum and
the product over partition of $d$ are finite. Moreover, the elements of
the product in the right side are positive numbers smaller than $1$ and the
elements of the sum in left are finite. We get that $w(T) < \infty$.

\section{Explicit computations for quotients of type $\tilde{A_{d}}$}

For
building of type $\tilde{A}_d$ we can define Hecke operators
$A_{1}, \cdots, A_{d-1}$ by the $d-1$ colors of the their
vertices. These operators commute, so when we consider
representations of the Hecke algebra we are in fact looking for
simultaneous eigenvectors, which determine the simultaneous
spectrum, denoted by $S_d$.
A finite quotient $X$ of $B$ is called Ramanujan complex if the eigenvalues of every non-trivial simultanenous eigenvector $v \in L^2 (X)$, $A_{k}v=\lambda_{k}v$, satisfy $(\lambda_1, \lambda_2, \dots, \lambda_{d-1})\in S_d$.  
 
  A Ramanujan graph, $X$, is defined as a finite $k$-regular graph,
which is connected and its adjacency matrix, $A_{X}$ satisfies: \\
\begin{displaymath}
	\spec (\mathcal{A}_{X}) \subseteq \{ \pm k \}\bigcup[-2 \sqrt{(k-1)},2
	\sqrt{(k-1)}].
\end{displaymath}
Samuels in \cite{Sam} proved that a quotient of the building by a
non-uniform lattice is not Ramanujan. We will use Hecke operators
to exhibit in details the quotient in the case $d=3$.

In the $1$-dimensional case of a graph, the Hecke operator is
defined for functions on the set of vertices by summing for each
vertex over its neighbors.
For high dimensional complexes, one
may apply the extra symmetry to define operators which take into
account more than the skeleton graph structure.

The weight function, $w: B^{1} \bigcup B^{0} \to (0,1]$ is defined by
if $u,v \in B^{0}$ and $(u,v) \in B[i]$ then:
$w(u)= | \Gamma(u)|^{-1}$ and $w(u,v) = |\Gamma(u)\bigcap
\Gamma(v)|^{-1}$. Define a measure on $B$ by the weight function $\mu(S) = \sum_{v
	\in S}w(v)$ for every $S \subseteq B^{0}$. This gives rise to the
space $L^2(B)$ of square-integrable functions with respect to the
measure, in which the norm and inner product are defined by:
$$\langle f, g \rangle = \int_{B^{0}}f \bar{g}d \mu = \sum_{v \in
	B^{0}}w(v)f(v) \bar{g}(v).$$ The Hecke operators are defined for
$f \in L^2(B)$ by $$(A_{i}f)(u) = \sum_{(u,v) \in B[i]}f(v).$$ It
can be verified that the Hecke operators $A_{i}$ and $A_{d-i}$ are
conjugates. Moreover, the operators are bounded, normal and
commute and their simultaneous spectrum, $S$, is a subset of $\mathbb{C}^{d-1}$.

Let $\Gamma$ be a non-uniform lattice as above, the Hecke
operators defined over $L^{2}(B)$ induce operators on
$L^{2}(\dom{\Gamma}{B})$.
The Hecke operators acting on $L^{2}(
\dom{\Gamma}{B})$ are
$(A_{i}f)(u) = \sum_{(u,v) \in B[i]} \frac{w(u,v)}{w(u)}f(v)$.
\begin{rem}
	The Hecke operators $A_i$ and $A_{d-i}$ on $L^{2}(
	\dom{\Gamma}{B})$ are conjugate (because $(u,v) \in B[i]$
	iff $(v,u) \in B[d-i]$). The operators $A_i,A_j$ are also known to
	commute.
\end{rem}

\subsection{The case $d=2$}

\begin{exmpl} Let $d=2$.
	In this example we find the eigenvector
	f, which can be in $L^{2}( \Gamma \setminus B)$ of the Hecke operator $A_{1}$,
	which is the only one.
	
	We note that a complex number $\lambda$ is in the spectrum of an operator
	$T$ iff $T-\lambda I$ is not onto. Here we compute eigenvalues of the operator.
	The order of the
	stabilizers of the vertices of $T$ are $$w(\Lambda_{ \bar{0}}) =
	\frac{1}{(q-1)^{2}q(q+1)},  w( \Lambda_{ \bar{n}}) =
	\frac{1}{(q-1)^{2}q^{n+1}}.$$ The eigenvector $f$ in this case has
	to satisfy $$\frac{1}{(q-1)^{2}q(q+1)}|f_{0}|^{2}+ \sum_{n=1}^{ \infty}
	\frac{1}{(q-1)^{2}q^{n+1}}|f_{n}|^{2} < \infty.$$ The operator
	$A_{1}$ is acting on the neighbors of color $1$. Let $f = (f_{0},
	f_{1}, \dots)$ be an element of $L^{2}( \dom{\Gamma}{B})$ then
	$(A_{1}f)_{0}=f_{1}(q+1)$ and $(A_{1}f)_{n} = qf_{n-1}+f_{n+1}$.
	
	We are looking for a vector $f=(f_{0}, f_{1}, \dots)$ of $\lambda$ such that
	$(A_{1}f)= \lambda f$. We know that
	\begin{displaymath}
		(\lambda f)_{0} = (q+1)f_{1}, (\lambda f)_{n} = q
		f_{n-1}+f_{n+1}.
	\end{displaymath} Hence, $f_{n+1} = \lambda f_{n} - qf_{n-1}$ and
	know $f_{0} = C + D = 1$ or $D = 1 - C$. This relation can be
	solved as $f_{n} = Cr_{1}^{n} + Dr_{2}^{n}$, where $r_{1}$ and
	$r_{2}$ are the roots of the equation $r^{2} - \lambda r + q = 0$, therefore,
	$r_{1,2} = \frac{ \lambda \pm \sqrt { \lambda ^{2} -4q}}{2}$.
	
	From the equation $\lambda (C + D) = (q + 1)f_{1} = (q + 1)(Cr_{1} + Dr_{2})$
	we get that
	\begin{displaymath}
		\lambda = (q + 1)(Cr_{1}+(1 - C)r_{2}) = (q + 1)(r_{2} +
		(r_{1} - r_{2})C) =
	\end{displaymath}
	\begin{displaymath}
		(q + 1)r_{2} + (q + 1) \sqrt {\lambda^{2} -
			4q}C,
	\end{displaymath} and then $C = \frac{\lambda - (q+1)r_{2}}{(q+1)\sqrt
		{\lambda^{2} - 4q}}$.
	
	The eignvector $f$ is then the sequence $(f_{0}, f_{1}, \dots)$,
	where  $$f_{n} = ( \frac{\lambda - (q+1)r_{2}}{(q+1)\sqrt
		{\lambda^{2} - 4q}})( \frac{ \lambda + \sqrt
		{\lambda^{2}-4q}}{2})^{n}+(1 - \frac{\lambda -
		(q+1)r_{2}}{(q+1)\sqrt {\lambda^{2} - 4q}})( \frac{ \lambda -
		\sqrt {\lambda^{2}-4q}}{2})^{n}.$$
	
	The function $f = (f_{0}, f_{1}, \dots)$ has to satisfy
	\begin{displaymath}
		\frac{1}{(q-1)^{2}q(q+1)}||f_{0}||^{2}+ \sum_{n=1}^{\infty}
		\frac{1}{(q-1)^{2}q^{n+1}}||f_{n}||^{2} < \infty.
	\end{displaymath}
	Now, if
	$r_{1},r_{2} \in \mathbb{C} \backslash \mathbb{R}$ then
    \begin{eqnarray*}
        ||f_{n}||^{2} = \\
        & = & f_{n} \bar{f_{n}} \\
        & = & (Cr_{1}^{n}+Dr_{2}^{n})(
		\bar{C}r_{2}^{n}+ \bar{D}r_{1}^{n}) \\
        & = &
		C \bar{D}r_{1}^{2n} + C\bar{C}(r_{1}r_{2})^{n} + D \bar{C}r_{2}^{2n}+D
		\bar{D}(r_{2}r_{1})^{n} \\
        & = &
		q^{n}(C \bar{C} + D \bar{D}) + C
		\bar{D}( \frac{ \lambda ^{2} + \lambda \sqrt
			{\lambda^{2}-4q}-2q}{2})^{n}+D \bar{C}( \frac{ \lambda ^{2} -
			\lambda \sqrt {\lambda^{2}-4q}-2q}{2})^{n}.
    \end{eqnarray*}
	If $r_{1}$, $r_{2} \in \mathbb{R}$ then \\
	\begin{eqnarray*}
		||f_{n}||^{2}
		& = & f_{n}f_{n} = \\
        & = & C^{2}( \frac{ \lambda ^{2} + \lambda \sqrt
			{\lambda^{2}-4q}-2q}{2})^{n}
		+ D^{2}( \frac{ \lambda ^{2} -
			\lambda \sqrt {\lambda^{2}-4q}-2q}{2})^{n}
		+ q^{n}2CD.
	\end{eqnarray*}
	The degree
	of the elements $||f_{n}||^{2}$ is $n$, and
	$\deg((q-1)^{2}q^{n+1}) = n+3$. Then
	$\deg\frac{||f_{n}||^{2}}{(q-1)^{2}q^{n+1}} = -3$. It is unclear if $f$  belongs
	$L^{2}( \dom{\Gamma}{B})$.
\end{exmpl}
\subsection{The case $d=3$}\label{ex:wgt}
	Assume $d = 3$. The diagram of $T$ in that case is like an infinite
	triangle with a vertex in $L_{ \bar{0}}$. The meaning of an arrow
	from $[L_{\bar{n}^{1}}]$ to $[L_{\bar{n}^{2}}]$ is that
	$[L_{\bar{n}^{1}}]$ is a neighbor of  $[L_{\bar{n}^{2}}]$ of
	degree $2$ and $[L_{\bar{n}^{2}}]$ is a neighbor of
	$[L_{\bar{n}^{1}}]$ of degree $1$. Denote the arrows and the
	vertices of the diagram by colors. There are $4$ kinds of vertices and
	$12$ kinds of arrows (which are divided by the construction of the
	stabilizers). 
	
	\begin{displaymath}
		\xymatrix{
			& & & & \textcolor{Pink}{440_P} \ar[dl]_{12} \ar[r]_{6} & \dots\\
			& & & \textcolor{Pink}{330_P} \ar[dl]_{12} \ar[r]_{6} & \textcolor{Green}{430_G} \ar[dl]_{10} \ar[u]^{7} \ar[r]_{8} & \dots \\
			& & \textcolor{Pink}{220_P} \ar[dl]_{12} \ar[r]_{6} & \textcolor{Green}{320_G} \ar[dl]_{10} \ar[u]^{7} \ar[r]_{8} & \textcolor{Green}{420_G} \ar[dl]_{10} \ar[u]^{11} \ar[r]_{8} & \dots \\
			& \textcolor{Pink}{110_P} \ar[dl]_{3} \ar[r]_{6} & \textcolor{Green}{210_G} \ar[dl]_{9} \ar[u]^{7} \ar[r]_{8} & \textcolor{Green}{310_G} \ar[dl]_{9} \ar[u]^{11} \ar[r]_{8} & \textcolor{Green}{410_G} \ar[dl]_{9} \ar[u]^{11} \ar[r]_{8} & \dots \\
			\colored{000_R} \ar[r]_{1} & \textcolor{Blue}{100_B} \ar[u]^{2} \ar[r]_{4} & \textcolor{Blue}{200_B} \ar[u]^{5} \ar[r]_{4} & \textcolor{Blue}{300_B} \ar[u]^{5} \ar[r]_{4} & \textcolor{Blue}{400_B} \ar[u]^{5} \ar[r]_{4} & \dots }
	\end{displaymath}
	\\
	\begin{displaymath}
		\bar{n} = \left\{ \begin{array}{ccc}
			\bar{0} & \colored{Red}\\
			n_{1}n_{1}0 & \textcolor{Pink}{Pink}\\
			n_{1}00 & \textcolor{Blue}{Blue}\\
			n_{1}n_{2}0 & \textcolor{Green}{Green},&n_{1}>n_{2}
		\end{array}\right.
	\end{displaymath}
	
	The stabilizers of the $4$ kinds of vertices ($\bar{n}=n_{1}n_{2}0$) are:\\
	$| \Gamma_{\colored{000}}| = (q-1)^{2}(q^{2}+q+1)(q+1)q^{3}$,\\
	$| \Gamma_{\textcolor{Pink}{(n_1n_10)}}| = (q-1)^{2}(q+1)q^{2n_{1} + 3}$,\\
	$| \Gamma_{\textcolor{Blue}{(n_100)}}| = (q-1)^{2}(q+1)q^{2n_{1} + 3}$,\\
	$| \Gamma_{\textcolor{Green}{(n_1n_20)}}| = (q-1)^{2}q^{2n_{1} + 3}$.\\
	Compute the indexes of the operators $A_{1}$ and $A_{2}$ by the
	weight function, $\frac{w(u,v)}{w(u)} = \frac{
		|\Gamma(\bar{n}^{1})|}{|\Gamma(\bar{n}^{1}, \bar{n}^{2})|}$, where
	$u = [L_{\bar{n}^{1}}]$ and $v = [L_{\bar{n}^{2}}]$ and
	$\Gamma(\bar{n})$ is the stabilizer of $[L_{\bar{n}}]$ and
	$\Gamma(\bar{n}^{1},\bar{n}^{2})$ is the stabilizer of the edge
	between $[L_{\bar{n}^{1}}]$ and $[L_{\bar{n}^{2}}]$.\\
	\\
	\begin{tabular}{|r|c|c|c|c|l|}
		\hline
		Edge & $n_{1}^{1}n_{2}^{1}0$ & $n_{1}^{2}n_{2}^{2}0$ & $w(u,v)$ & $\frac{w(u,v)}{w(u)}$ & $\frac{w(u,v)}{w(v)}$\\
		\hline
		$1$ & $000$ & $100$ & $(q+1)(q-1)^{2}q^{3}$ & $q^{2} + q + 1$ & $q^{2}$\\
		\hline
		$2$ & $100$ & $110$ & $(q-1)^{2}q^{4}$ & $(q+1)q$ & $ (q+1)q$\\
		\hline
		$3$ & $110$ & $000$ & $(q+1)(q-1)^{2}q^{3}$ & $q^{2}$ &  $q^{2} + q + 1$\\
		\hline
		$4$ & $n_{1}00$, $n_{1} \ge 1$ & $(n_{1}+1)00$ & $ (q+1)(q-1)^{2}q^{2n_{1}+3}$ & 1 & $q^{2}$\\
		\hline
		$5$ & $n_{1}00$ & $n_{1}10$ & $ (q-1)^{2}q^{2n_{1}+2}$ & $(q + 1)q$ & q\\
		\hline
		$6$ & $n_{1}n_{1}0$ & $(n_{1}+1)n_{1}0$ & $(q-1)^{2}q^{2n_{1}+3}$ & $q+1$ & $q^{2}$\\
		\hline
		$7$ & $n_{1}(n_{1} - 1)0$ & $n_{1}n_{1}0$ & $ (q-1)^{2}q^{2n_{1}+2}$ & $q$ & $(q+1)q$\\
		\hline
		$8$ & $n_{1}n_{2}0$ & $(n_{1}+1)n_{2}0$ & $ (q-1)^{2}q^{2n_{1}+3}$ & 1 & $q^{2}$\\
		\hline
		$9$ & $n_{1}10$ & $(n_{1} -1)00$ & $ (q-1)^{2}q^{2n_{1} + 1}$ & $q^{2}$ & $q + 1$\\
		\hline
		$10$ & $n_{1}n_{2}0$ & $(n_{1}-1)(n_{2}-1)0$ & $(q-1)^{2}q^{2n_{1}+1}$ & $q^{2}$ & $1$\\
		\hline
		$11$ & $n_{1}n_{2}0$ & $n_{1}(n_{2}+1)0$ & $(q-1)^{2}q^{2n_{1}+2}$ & $q$ & $q$\\
		\hline
		$12$ & $n_{1}n_{1}0$ & $(n_{1} - 1)(n_{1} - 1)0$ & $ (q+1)(q-1)^{2}q^{2n_{1}+1}$ & $q^{2}$ & 1\\
		\hline
	\end{tabular}
	\\
	\\
	Since $d=3$, there are two colored Hecke operators,
	$A_{1}$ and $A_{2}$. We will check that they commute, namely that
	$(A_{1}A_{2}f)(u) = (A_{2}A_{1}f)(u)$ for every $u \in T$. First
	compute the left-hand side, with the understanding that all the points $u,r,v$ are in $T$:
	\begin{eqnarray*}
		(A_{2}A_{1}f)(u) =& \sum_{(u,r) \in B[2]} \frac{w(u,r)}{w(u)}(A_{1}f)(r) \\ 
	=&\sum_{(u,r) \in B[2]} \frac{w(u,r)}{w(r)}\left (\sum_{(r,v) \in B[1]} \frac{w(r,v)}{w(r)} \right )f(v)\\ 
	=&\sum_{ 	(u,r), (v,r) \in B[2]} \frac{w(u,r)}{w(u)} \frac{w(r,v)}{w(r)}f(v) \\
	=&\sum_{(u,r), (v,r) \in B[2]} \frac{| \Gamma(u)|}{| \Gamma(u,r)|} \frac{| \Gamma(r)|}{| \Gamma(r,v)|}f(v),
	\end{eqnarray*}
	\begin{eqnarray*}
		(A_{1}A_{2}f)(u) = & \sum_{ (u,t), (v,t) \in B[1]} \frac{| \Gamma(u)|}{| \Gamma(u,t)|} \frac{| \Gamma(t)|}{| \Gamma(t,r)|}f(v).\\
	\end{eqnarray*}
	\\
	Dividing through by $| \Gamma(u)|$, we have to show that
	\begin{equation}\label{eq:wgt}
		\sum_{ (u,t),(v,t) \in B[1]} \frac{|
			\Gamma(t)|}{| \Gamma(u,t)|| \Gamma(v,t)|}f(v)\\	
		= \sum_{(u,r),(v,r) \in B[2]} 
\frac{| \Gamma(r)|}{| \Gamma(u,r)||
			\Gamma(v,r)|}f(v).\\
	\end{equation}
	The proof will be done with one case of internal vertex (namely,
	the vertex does not lie on the side walls of $T$),
	the other cases can be proved in the same way. Let $u$ be
	a vertex of $T$ of color green, such that $u
	= L_{n_{1}n_{2}0}$ and $n_{1} \ge 6$, $n_{2} \ge 3$ and $n_{1} -
	n_{2} \ge 3$.
	
	To save space, denote here the number $(q-1)^{2}q^{m}$ by $(m)$.
	Then, by knowing the coordinates of vertex $u$, the computation
	at the left-hand side of equation~(\ref{eq:wgt}) is:
	\begin{eqnarray*}
		\sum_{ \begin{array}{l}
				(u,t),(v,t) \in B[1]\\
				u,t,v \in T \end{array}} \frac{| \Gamma(t)|}{| \Gamma(u,t)||
			\Gamma(v,t)|}f(v) &  = & \\
		=   \frac{|
			\Gamma(t_{1})|}{|\Gamma(u,t_{1})||\Gamma(v_{1},t_{1})|}f(v_{1})
		& + & \frac{|
			\Gamma(t_{1})|}{|\Gamma(u,t_{1})||\Gamma(v_{6},t_{1})|}f(v_{6})
		\\
		+ \frac{|
			\Gamma(t_{2})|}{|\Gamma(u,t_{2})||\Gamma(v_{2},t_{2})|}f(v_{2})
		& + &
		\frac{|
			\Gamma(t_{2})|}{|\Gamma(u,t_{2})||\Gamma(v_{3},t_{2})|}f(v_{3})
		\\
		+ \frac{| \Gamma(t_{3})|}{|\Gamma(u,t_{3})||\Gamma(v_{4},t_{3})|}f(v_{4})
		& + &
		\frac{| \Gamma(t_{3})|}{|\Gamma(u,t_{3})||\Gamma(v_{5},t_{3})|}f(v_{5})
		\\
		= \frac{(2n_{1}+5)}{(2n_{1}+3)(2n_{1}+5)}f(v_{1})
		& + &
		\frac{(2n_{1}+5)}{(2n_{1}+3)(2n_{1}+4)}f(v_{6})
		\\
		+ \frac{(2n_{1}+3)}{(2n_{1}+2)(2n_{1}+3)}f(v_{2})
		& + & \frac{(2n_{1}+3)}{(2n_{1}+2)(2n_{1}+1)}f(v_{3})
		\\
		+ \frac{(2n_{1}+1)}{(2n_{1}+1)(2n_{1}-1)}f(v_{4})
		& + & \frac{(2n_{1}+1)}{(2n_{1}+1)(2n_{1})}f(v_{5})
		\\
		= \frac{1}{(2n_{1}+3)}f(v_{1})
		& + & \frac{1}{(2n_{1}+2)}f(v_{6})
		\\
		+ \frac{1}{(2n_{1}+2)}f(v_{3})
		& + & \frac{1}{(2n_{1})}f(v_{2})
		\\
		+ \frac{1}{(2n_{1}-1)}f(v_{4})
		& + & \frac{1}{(2n_{1})}f(v_{5}).
	\end{eqnarray*}
 by out conditions, all the star below is in $T$ and of color
green.
\begin{displaymath}
\xymatrix{
	& & & v_{2} \ar[dl] & \\
	& v_{3} \ar[r] & t_{2} & r_{2} \ar[dl] \ar[r] \ar[u] & v_{1} \ar[dl] \\
	& r_{3} \ar[r] \ar[u] & u \ar[r] \ar[u] \ar[dl] & t_{1} & \\
	v_{4} \ar[r] \ar[ur] & t_{3} & r_{1} \ar[u] \ar[r] \ar[dl] & v_{6} \ar[u] & \\
	& v_{5} \ar[u] & & }
\end{displaymath}

	Now, compute the right-hand side of equation~(\ref{eq:wgt}) and get:
	\begin{eqnarray*}  \sum_{ \begin{array}{l}
				(u,r),(v,r) \in B[2]\\
				u,r,v \in T \end{array}} \frac{| \Gamma(r)|}{| \Gamma(u,r)|| \Gamma(v,r)|}f(v)& =&\\
	\end{eqnarray*}
	\begin{eqnarray*}
		= \frac{| \Gamma(r_{1})|}{|\Gamma(u,r_{1})||\Gamma(v_{5},r_{1})|}f(v_{5}) & + &
		\frac{| \Gamma(r_{1})|}{|\Gamma(u,r_{1})||\Gamma(v_{6},r_{1})|}f(v_{6}) \\
		+ \frac{| \Gamma(r_{2})|}{|\Gamma(u,r_{2})||\Gamma(v_{1},r_{2})|}f(v_{1}) & + &
		\frac{| \Gamma(r_{2})|}{|\Gamma(u,r_{2})||\Gamma(v_{2},r_{2})|}f(v_{2})  \\
		+ \frac{| \Gamma(r_{3})|}{|\Gamma(u,r_{3})||\Gamma(v_{3},r_{3})|}f(v_{3}) & + &
		\frac{| \Gamma(r_{3})|}{|\Gamma(u,r_{3})||\Gamma(v_{4},r_{3})|}f(v_{4})  \\
		= \frac{(2n_{1}+3)}{(2n_{1}+1)(2n_{1}+2)}f(v_{5}) & + &
		\frac{(2n_{1}+3)}{(2n_{1}+2)(2n_{1}+3)}f(v_{6}) \\
		+ \frac{(2n_{1}+5)}{(2n_{1}+3)(2n_{1}+5)}f(v_{1}) & + &
		\frac{(2n_{1}+5)}{(2n_{1}+4)(2n_{1}+3)}f(v_{2}) \\
		+ \frac{(2n_{1}+1)}{(2n_{1})(2n_{1}+1)}f(v_{3}) & + &
		\frac{(2n_{1}+1)}{(2n_{1}-1)}(2n_{1}+1)f(v_{4}) \\
		= \frac{1}{(2n_{1})}f(v_{5}) & + & \frac{1}{(2n_{1}+2)}f(v_{6}) \\
		+ \frac{1}{(2n_{1}+3)}f(v_{1})  & + & \frac{1}{(2n_{1}+2)}f(v_{2}) \\
		+ \frac{1}{(2n_{1})}f(v_{3}) & + & \frac{1}{(2n_{1}-1)}f(v_{4}).
	\end{eqnarray*}
	
	We get that the left side and the right side of equation~(\ref{eq:wgt}) are equal.

The spectrum of the Hecke operators is important for understanding
expansion properties of the graph.

\begin{defn}
	Denote by $\spec(\dom{\Gamma}{B})$ the set of simultaneous
	non-trivial eigenvalues of the $d-1$ Hecke operators on
	$L^{2}((\dom{\Gamma}{B})^{0}$) and by $\spec(B)$ the spectrum of
	the $d-1$ Hecke operators on $L^{2}(B^{0})$.
\end{defn}
The complex $\dom{\Gamma}{B}$ is Ramanujan if its spectrum
satisfies:
\begin{displaymath}
	\spec(\dom{\Gamma}{B}) \subseteq \spec(B)
\end{displaymath}

\begin{exmpl}
	Assume $T$ satisfies the condition of Example~\ref{ex:wgt}, so in particular $d=3$. We will compute the Hecke operators and the first few values of a simultaneous eigenvector $f \in L^{2}(\Gamma \setminus B)$ of the operators.
	\\
	Denote the number $q^{2}+q+1$ by $t$ and the number $q+1$ by $r$. The eigenvalues $\lambda_{1}$ and $\lambda_{2}$ belong to $\mathbb{C}$.\\
	The operators of Hecke, $A_{1}$ and $A_{2}$, which act on the function $f$, are:\\
	\\
	\\
	\\
	$(A_{1}f)(000) = \lambda_{1}f(000) = tf(100)$\\
	$(A_{2}f)(000) = \lambda_{2}f(000) = tf(110)$\\
	\\
	$(A_{1}f)(100) = \lambda_{1}f(100) = rqf(110) + f(200)$\\
	$(A_{2}f)(100) = \lambda_{2}f(100) = rf(210) + q^{2}f(000)$\\
	\\
	$(A_{1}f)(110) = \lambda_{1}f(110) = rf(210) + q^{2}f(000)$\\
	$(A_{2}f)(110) = \lambda_{2}f(110) = f(220) + rqf(100)$\\
	\\
	$(A_{1}f)(200) = \lambda_{1}f(200) = f(300) + qrf(210)$\\
	$(A_{2}f)(200) = \lambda_{2}f(200) = rf(310) + q^{2}f(100)$\\
	\\
	$(A_{1}f)(210) = \lambda_{1}f(210) = f(310) + qf(220) + q^{2}f(100)$\\
	$(A_{2}f)(210) = \lambda_{2}f(210) = f(320) + qf(200) + q^{2}f(110)$\\
	\\
	$(A_{1}f)(220) = \lambda_{1}f(220) = rf(330) + q^{2}f(110)$\\
	$(A_{2}f)(220) = \lambda_{2}f(220) = f(330) + qrf(210)$\\
	\\
	$(A_{1}f)(n_{1}00) = \lambda_{1}f(n_{1}00) = f((n_{1}+1)00) + rqf(n_{1}10)$\\
	$(A_{2}f)(n_{1}00) = \lambda_{2}f(n_{1}00) = rf((n_{1}+1)10) + q^{2}f((n_{1} - 1)00)$\\
	\\
	$(A_{1}f)(n_{1}n_{1}0) = \lambda_{1}f(n_{1}n_{1}0) = rf((n_{1}+1)n_{1}0) + q^{2}f((n_{1} - 1)(n_{1} - 1)0)$\\
	$(A_{2}f)(n_{1}n_{1}0) = \lambda_{2}f(n_{1}n_{1}0) = qrf(n_{1}(n_{1} - 1) 0) + f((n_{1}+1)(n_{1}+1)0)$\\
	\\
	$(A_{1}f)(n_{1}10) = \lambda_{1}f(n_{1}10) = f((n_{1}+1)10) + qf(n_{1}20) + q^{2}f((n_{1} - 1)00)$\\
	$(A_{2}f)(n_{1}10) = \lambda_{2}f(n_{1}10) = f((n_{1}+1)20) + q^{2}f((n_{1} - 1)10) +qf(n_{1}00)$\\
	\\
	$(A_{1}f)(n_{1}(n_{1}-1)0) = \lambda_{1}f(n_{1}(n_{1}-1)0) = f((n_{1}+1)(n_{1}-1)0) + qf(n_{1}n_{1}0) + q^{2}f((n_{1}-1)(n_{1}-2)0)$\\
	$(A_{2}f)(n_{1}(n_{1}-1)0) = \lambda_{2}f(n_{1}(n_{1}-1)0) = f((n_{1}+1)n_{1}0) + q^{2}f((n_{1}-1)(n_{1}-1)0)+ \\ qf(n_{1}(n_{1}-2)0)$\\
	\\
	$(A_{1}f)(n_{1}n_{2}0) = \lambda_{1}f(n_{1}n_{2}0) = f((n_{1}+1)n_{2}0) + qf(n_{1}(n_{2}+1)0) +\\ q^{2}f((n_{1}-1)(n_{2}-1)0)$\\
	$(A_{2}f)(n_{1}n_{2}0) = \lambda_{2}f(n_{1}n_{2}0) = f((n_{1}+1)(n_{2}+1)0) + qf(n_{1}(n_{2}-1)0) +\\ q^{2}f((n_{1}-1)n_{2}0)$\\
	\\
	By the above operators, the function $f$ can be defined:\\
	$f(000) = 1$,\\
	$f(100) = \frac{ \lambda_{1}f(000)}{t}$,\\
	$f(110) = \frac{ \lambda_{2}f(000)}{t}$,\\
	$f(200) = \lambda_{1}f(100) - rqf(110)$,\\
	$f(210) = \frac{ \lambda_{2}f(100) - q^{2}f(000))}{r}$,\\
	$f(220) = \lambda_{2}f(110) - qrf(100)$,\\
	$f(n_{1}00) = \lambda_{1}f((n_{1}-1)00) - rqf((n_{1} - 1)10)$, $n_{1} \ge 3$,\\
	$f(n_{1}n_{1}0) = \lambda_{2}((n_{1}-1)(n_{1}-1)0) - qrf((n_{1}-1)(n_{2}-2)0)$, $n_{1} \ge 3$,\\
	\\
	$f(n_{1}10) = \frac{\lambda_{2}f((n_{1}-1)00) - q^{2}f((n_{1}-2)00)}{r}$,\\
	$f(n_{1}10) = \lambda_{1}f((n_{1}-1)10) - qf((n_{1}-1)20) - q^{2}f((n_{1}-2)00)$, $n_{1} \ge 3$,\\
	\\
	$f(n_{1}(n_{1}-1)0) = \frac{\lambda_{1}f((n_{1}-1)(n_{1}-1)0) - q^{2}f((n_{1}-2)(n_{1}-2)0)}{r}$,\\
	$f(n_{1}(n_{1}-1)0) = \lambda_{2}f((n_{1}-1)(n_{1}-2)0) - qf((n_{1}-1)(n_{1}-3)0) - q^{2}f((n_{1}-2)(n_{1}-2)0)$, $n_{1} \ge 3$,\\
	\\
	$f(n_{1}n_{2}0) = \lambda_{2}f((n_{1}-1)(n_{2}-1)0) - qf((n_{1}-1)(n_{2}-2)0) - q^{2}f((n_{1}-2)(n_{2} - 1)0)$\\
	$f(n_{1}n_{2}0) = \lambda_{1}f((n_{1}-1)n_{2}0) - qf((n_{1}-1)(n_{2}+1)0) - q^{2}f((n_{1}-2)(n_{2}-1)0)$, $n_{1} - n_{2} \ge 2$, $n_{2} \ge 2$.\\
	\\
	The aim is to define a function $f$, which belongs to $L^{2}(T)$, i.e., \\
	$\sum_{u \in T}w(u)||f(u)||^{2} < \infty $.\\
	In the case $d=3$, $A_{1}$ and $A_{2}$ are adjoint operators and their eigenvalues are adjoint too.
	The function $f : T \to C$ is:\\
	$f( \bar{0}) = 1$\\
	$f(100) = \frac{ \lambda_{1}}{t}$\\
	$f(110) = \frac{ \lambda_{2}}{t}$\\
	$f(200) = \frac{ \lambda_{1}^{2} - qr \lambda_{2}}{t}$\\
	$f(210) = \frac{ \lambda_{1} \lambda_{2} - q^{2}t}{tr}$\\
	$f(220) = \frac{ \lambda_{2}^{2} - qr \lambda_{1}}{t}$\\
	$f(300) = \frac{ \lambda_{1}^{3} - q(r+1) \lambda_{1} \lambda_{2} + q^{3}t}{t}$\\
	$f(310) = \frac{ \lambda_{2} \lambda_{1}^{2} - qr \lambda_{2}^{2} - q^{2} \lambda_{1}}{rt}$\\
	$f(320) = \frac{ \lambda_{1} \lambda_{2}^{2} - qr \lambda_{1}^{2} - \lambda_{2}q^{2}}{rt}$\\
	$f(330) = \frac{ \lambda_{2}^{3} - q \lambda_{1} \lambda_{2}(r+1) +q^{3}t}{t}$\\
	$f(400) = \frac{ \lambda_{1}^{4} - q \lambda_{2} \lambda_{1}^{2}(r+2) +q^{2}r \lambda_{2}^{2} + q^{3} \lambda_{1}(t+1)}{t}$\\
	$f(410) = \frac{ \lambda_{2} \lambda_{1}^{3} - q \lambda_{1} \lambda_{2}^{2}(r+1) +q^{3} \lambda_{2}(1+r^{2}) - q^{2} \lambda_{1}^{2}}{rt}$\\
	$f(420) = \frac{ \lambda_{1}^{2} \lambda_{2}^{2} - qr( \lambda_{1}^{3} + \lambda_{2}^{3}) + \lambda_{1} \lambda_{2}q^{3}(r+2) - q^{5}t}{rt}$\\
	$f(430) = \frac{\lambda_{1} \lambda_{2}^{3} - q \lambda_{1}^{2} \lambda_{2}(r + 1) - \lambda_{2}^{2}q
		^{2} + \lambda_{1}q^{3}(t+r)}{rt}$\\
	$f(440) = \frac{ \lambda_{2}^{4} - q \lambda_{1} \lambda_{2}^{2}(r+2) + \lambda_{2}q^{3}(t+1) +q^{2}r \lambda_{1}^{2}}{t}$\\
	$f(500) = \frac{ \lambda_{1}^{5} - q \lambda_{1}^{3} \lambda_{2}(r+3) +  \lambda_{1} \lambda_{2}^{2}q^{2}(2r+1)+ \lambda_{1}^{2}q^{3}(t+2) - q^{4} \lambda_{2}(r^{2}+1)}{t}$\\
	$f(510) = \frac{ \lambda_{2} \lambda_{1}^{4} - q \lambda_{1}^{2} \lambda_{2}^{2}(r+2) + q^{3} \lambda_{1} \lambda_{2}(t+r+2) - q^{2} \lambda_{1}^{3} + q^{2}r \lambda_{2}^{3} - q^{5}t}{rt}$\\
	$f(520) = \frac{ \lambda_{1}^{3} \lambda_{2}^{2} - q \lambda_{1} \lambda_{2}^{3}(r+1) + q^{2} \lambda_{2} \lambda_{1}^{2}(t+3q) - qr \lambda_{1}^{4} + q^{3} \lambda_{2}^{2}(r+1) - q^{4} \lambda_{1}(rt+q)}{rt}$\\
	$f(530) = \frac{ \lambda_{1}^{2} \lambda_{2}^{3} - \lambda_{2} \lambda_{1}^{3}q(q+2) + q^{2} \lambda_{1} \lambda_{2}^{2}(q^{2}+4q+1) - qr \lambda_{2}^{4} + q_{3} \lambda_{1}^{2}(q+2) - \lambda_{2}q^{4}(q^{3}+2q^{2}+3q+1)}{rt}$\\
	$f(540) = \frac{ \lambda_{1} \lambda_{2}^{4}- \lambda_{1}^{2} \lambda_{2}^{2}q(r+2) + \lambda_{1} \lambda_{2}q^{3}(t+r+2) - q^{2} \lambda_{2}^{3} + q^{2}r \lambda_{1}^{3} - q^{5}t}{rt}$\\
	$f(550) = \frac{ \lambda_{2}^{5} - q \lambda_{1} \lambda_{2}^{3}(r+3) + q^{2} \lambda_{1}^{2} \lambda_{2}(2r+1) + \lambda_{2}^{2}q^{3}(t+2) - \lambda_{1} q^{4}(t+r)}{t}$\end{exmpl}
\begin{cor}
	For every complex values $\lambda_{1}$ and $\lambda_{2}$, there
	is a simultaneous eigenvector, which is a function from $T$ to
	$\mathbb{C}$. Therefore, in the action of the Hecke algebra on
	unrestricted space of functions $\C^T$, the simultaneous spectrum
	is $\C^2$.
\end{cor}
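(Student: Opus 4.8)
The plan is to realize the simultaneous eigenvector $f$ by the explicit outward recursion appearing in the preceding example, and to promote that formal computation into a genuine solution of $A_1 f=\lambda_1 f$ and $A_2 f=\lambda_2 f$ on all of $T$. Normalize $f(\bar 0)=1$. The structural observation, read off from the operator formulas tabulated above, is that both Hecke operators are \emph{triangular} in the level $n_1$: in each eigenvalue equation the neighbor of largest first coordinate, namely the forward neighbor obtained by raising $n_1$ by one, occurs with an invertible coefficient ($1$ in the generic green case, $r=q+1$ on the near-diagonal green vertices, and $t=q^2+q+1$ at $\bar 0$). Hence each equation can be solved for the value of $f$ at that forward neighbor in terms of values at vertices of strictly smaller level.

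First I would fix, once and for all, which equation defines which vertex, and run the recursion by induction on the level $k=n_1$. Define the diagonal (``pink'') vertex $(k,k,0)$ from the $A_2$-equation at $(k-1,k-1,0)$, and every non-diagonal vertex $(k,n_2,0)$ (the ``blue'' column $n_2=0$, the near-diagonal case $n_2=k-1$, and the interior green ones) from the $A_1$-equation at $(k-1,n_2,0)$, using the appropriate boundary form when $n_2\in\{0,k-1\}$. One checks that every source term on the right-hand side lies at level $\le k-1$, so the $k+1$ values at level $k$ are determined from the previous two levels; since $f(\bar 0)=1$ starts the induction, $f$ is a well-defined element of $\C^T$. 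By this very choice the equation $A_1 f=\lambda_1 f$ holds at \emph{every} vertex (each $A_1$-equation is spent defining its forward neighbor), while $A_2 f=\lambda_2 f$ holds at every diagonal vertex, including $\bar 0$, where $f(110)=\lambda_2/t$ forces $A_2 f(\bar 0)=\lambda_2$.

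The main obstacle is consistency: the $A_2$-equations at the blue and green vertices were \emph{not} used in the construction, and must be shown to hold automatically. This is exactly where the commutativity of $A_1$ and $A_2$ established in \eq{eq:wgt} is indispensable. Set $g=A_2 f-\lambda_2 f\in\C^T$. Using $A_1 f=\lambda_1 f$ everywhere together with $A_1A_2=A_2A_1$ one computes $A_1 g=A_1A_2 f-\lambda_2 A_1 f=A_2A_1 f-\lambda_1\lambda_2 f=\lambda_1 A_2 f-\lambda_1\lambda_2 f=\lambda_1 g$, so $g$ is itself an $A_1$-eigenfunction for $\lambda_1$ and obeys the same triangular $A_1$-recursion as $f$. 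I would then induct on the distance from the diagonal $d=n_1-n_2$: the base case $d=0$ is precisely $g\equiv 0$ on the diagonal, which we already know, and since the $A_1$-recursion expresses $g$ at a vertex of distance $d+1$ solely through values of $g$ at distances $\le d$, it propagates $g=0$ outward. Hence $g\equiv 0$, that is, $A_2 f=\lambda_2 f$ holds everywhere.

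This produces a nonzero $f\in\C^T$, since $f(\bar 0)=1$, with $A_1 f=\lambda_1 f$ and $A_2 f=\lambda_2 f$ for arbitrary $(\lambda_1,\lambda_2)\in\C^2$; so every such pair is a simultaneous eigenvalue and the simultaneous spectrum on the unrestricted space $\C^T$ is all of $\C^2$. I expect the only genuine labor to be the bookkeeping for the boundary (blue and near-diagonal) vertices, where the operator coefficients degenerate from the generic green pattern and the base cases of the two inductions must be matched by hand; the conceptual engine throughout is the pair ``$A_1$-triangularity plus $A_1A_2=A_2A_1$''.
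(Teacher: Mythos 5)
Your argument is correct and rests on the same outward recursion in the level $n_1$ that the paper's preceding example uses; what you do differently is to confront the over-determination of that recursion head-on. The paper treats every eigenvalue equation as a defining relation and consequently writes down two competing formulas for $f(n_1 1 0)$, for $f(n_1(n_1-1)0)$ and for the generic $f(n_1 n_2 0)$ --- one coming from each Hecke operator --- without proving that they agree; agreement is only exhibited numerically on the values computed up to level five. You instead select a non-redundant defining set (the $A_1$-equation at every vertex, plus the $A_2$-equation at the diagonal vertices) and then derive the unused $A_2$-equations from the commutativity identity \eq{eq:wgt}: the function $g=A_2f-\lambda_2 f$ is an $A_1$-eigenfunction vanishing on the diagonal, hence vanishes identically by the triangular recursion in $n_1-n_2$. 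This supplies the compatibility argument that the paper omits, and it is the right way to justify the corollary. Two caveats: your deduction needs the pointwise identity \eq{eq:wgt} at \emph{every} vertex of $T$, including the boundary (blue, pink and near-diagonal) ones, whereas the paper verifies only one interior green case and asserts the rest are analogous, so your proof inherits that dependency; and in the induction on the distance $n_1-n_2$ you should state explicitly that every vertex at distance $e+1$ is the forward $A_1$-neighbour, with invertible coefficient $1$, $r$ or $t$, of the vertex directly below it at distance $e$ --- this is what makes the propagation of $g=0$ exhaustive.
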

For example:
If $\lambda_{1}$ and $\lambda_{2}$ are adjoint, we can choose $\lambda_{1} = \lambda_{2} = t$ and get $f( \bar{n}) \equiv 1$, $f \in L^{2}(T)$. If we choose $\lambda_{1} = \rho^{i}t$, $\lambda_{2} = \rho^{-i}t$ ($\rho^{ki}$ is a root of unity of order $3$), then $f( \bar{n}) = \rho^{ki}$, $k = 1,-1,0$. In the second case, we get a coloring of the diagram of $T$:
\begin{displaymath}
	\xymatrix{
		& & & & & i \ar[dl] \ar[r] & \dots\\
		& & & & -i \ar[dl] \ar[r] & 1 \ar[u] \ar[dl] \ar[r] & \dots\\
		& & & 1 \ar[dl] \ar[r] & i \ar[u] \ar[r] \ar[dl] & -i \ar[u] \ar[dl] \ar[r] & \dots \\
		& & i \ar[dl] \ar[r] & -i \ar[u] \ar[r] \ar[dl] & 1 \ar[u] \ar[r] \ar[dl] & i \ar[u] \ar[dl] \ar[r] & \dots \\
		& -i \ar[dl] \ar[r] & 1 \ar[u] \ar[r] \ar[dl] & i \ar[u] \ar[r] \ar[dl] & -i \ar[u] \ar[r] \ar[dl] & 1 \ar[u] \ar[dl] \ar[r] & \dots \\
		1 \ar[r] & i \ar[u] \ar[r] & -i \ar[u] \ar[r] & 1 \ar[u] \ar[r] & i \ar[u] \ar[r] & -i \ar[u] \ar[r] & \dots }
\end{displaymath}

The function in the above case is $f(n_{1}n_{2}0)=
\rho^{(n_{1}+n_{2})i}$; moreover, $f \in L^{2}(T)$.


\end{document}

\subsection{Ramanujan complexes}

Let $X$ be a connected graph with a set of vertices
$V$ ($|V|$ $=$ $n$) and a set of edges $E$.
For every edge there is a pair of vertices $(u,v)$,
called the {\bf endpoints} and we say that $u$ and $v$ are {\bf adjacent}
(the adjacency will be denoted by $u \sim v$).
A {\bf loop} is an edge with equal endpoints ($u=v$), and multiple
edges are edges having the same pair of endpoints. The {\bf degree} of
a vertex $v$, $\deg(v)$, is the number of edges adjacent with
$v$ (a loop will be considered with multiplicity $2$ and if there are $m$
edges between $v$ and another vertex $u$, then we will count $u$ $m$ times).
A graph is called {\bf $k$-regular} if for every vertex $v$, $\deg(v) = k$.
One can define the {\bf adjacency matrix} $A$ of a graph by
$A = (a_{ij})$, where $1 \le i,j \le n$ and they are ranged over the
set of vertices ($i$ denotes the vertex $u_{i}$); then $a_{ij}$
is the number of edges having the pair $(u_{i},u_{j})$ as endpoints.
A is symmetric (because the graph is undirected), so its eigenvalues
are all real. We denote by $\lambda_{0}$,
$\lambda_{1}$, $\dots$, $\lambda_{n-1}$ the eigenvalues of $A$,
which satisfy $\lambda_{0} \ge \lambda_{1} \ge \dots \ge \lambda_{n-1}$. For a $k$-regular graph, one has $k = \lambda_0$.
The maximal absolute value of all the non-trivial
(distinct from $\pm k$) eigenvalues of $A$ will be denoted by $\lambda(X)$.

A {\bf Ramanujan multigraph} is a $k$-regular graph satisfing
\begin{displaymath}
\lambda(X) \le 2 \sqrt{k-1}.
\end{displaymath}
A {\bf Ramanujan graph} is a simple Ramanujan multigraph, namely, there are no multiple edges or loops.
For example, the complete graph $K_{r}$ is a Ramanujan graph.  However such an example is a simple one, the challenge is to construct Ramanujan graphs with small degrees.

Let $Y \subseteq X$ be a set. Define the {\bf boundary} of $Y$
(denoted by $ \partial Y$)
to be the set of vertices, which are adjacent to some vertex in $Y$,\\
$ \partial Y$ $=$ $\{x \in X$ : $x \sim y$ for some $y \in Y$ $\}$.
A $k$-regular graph $X$ is a {\bf $c$-expander} for some $0 < c \in \mathbb{R}$ if
\begin{displaymath}
\frac{| \partial Y|}{|Y|} \ge c
\end{displaymath}
for every $Y \subseteq X$, such that $|Y| \le \frac{|X|}{2}$.
Expander graphs are important in computer science and
in communication networks.
It turns out that graphs with small $\lambda_{1}$ have better expansion, and in particular Ramanujan graphs are optimal expanders
in the following sense: Alon and Boppana \cite{LPS} found that
\begin{displaymath}
\lim_{n \to \infty}(inf \lambda(X)) \ge 2 \sqrt{k-1}
\end{displaymath}
where the infimum is over $k$-regular graphs and fixed $n$ $(|V|=n)$.
So, if we looking for a graphs with smaller $\lambda_{1}(X)$, the Ramanujan
are the best.

Let $G$ be a finite group and $S \subseteq G$ a set, such that $|S|=m$
(an element of $G$ can be repeated in $S$). Assume $S$ is symmetric (that is,
if $s \in S$ then $s^{-1} \in S$ with the same time of appearance).
The graph $X$ corresponding to $G$ and $S$ will have a set of vertices
which are the elements of $G$ and a set of edges which are every $(x,y)$,
such that $x^{-1}y \in S$. $X$ can be a multiple graph. Denote by $\chi$
an irreducible character of $G$. Then, for getting a Ramanujan graphs,
we have to require that $| \sum_{s \in S} \chi(s)| \le 2 \sqrt{k-1}$.
Winnie Li \cite{Li1} constructed \\
Ramanujan graphs by using the Cayley
graph $X$ of the group $G = \mathbb{F}_{q^{2}}$ and $S= \{g \in G : N(g)=1 \}$
($N(g)$ is the norm of $g$).

\subsection{Construction of Ramanujan graphs and Ramanujan
	complexes}

Another way to construct Ramanujan graphs or complexes
is using the quotients of uniform and non-uniform lattices of a group $G$.
Let $G = \PGL_{2}(\Q_{p})$ and $\Gamma$ is a congruence
subgroup of uniform lattice $\Delta$ of $G$,
where $\Q_{p}$ is the field of the $p$-adic integers. Then there
is a Bruhat-Tits tree, $T$, associated to $G$. Define the graph
$X$ to be the quotient of the tree under the action of $\Gamma$
($X=\dom{\Gamma}{T}$) and $\mathcal{A}_{X}$ the adjacency matrix of the
graph $X$. Lubotzky, Philips and Sarnak, \cite{LPS}, defined and
constructed Ramanujan graphs of this kind, which are a finite
connected $k$-regular non-directed graphs that satisfy:
\begin{displaymath}
\spec (\mathcal{A}_{X}) \subseteq \{\pm k\}\cup [-2\sqrt{k-1},2\sqrt{k-1}].
\end{displaymath}

The inclusion above means that the eigenvalues of every
non-trivial eigenvector (of $L^{2}(X)$) belong to the spectrum
of the Laplace operator acting on $L^{2}(T)$. Morgenstern
\cite{Mor1} gave a similar construction, but instead of using
$\Q_{p}$, used a local field $F$ of characteristic $p > 0$. In
this case, there are non-uniform lattices, $\Delta =
\PGL_{2}(\mathbb{F}_{q}[1/t])$ with finite co-volume. In his later
work (\cite{Mor2}), Morgenstern defined the weighted adjacency
matrix $\mathcal{A}_{X}$ and gave a way to construct Ramanujan diagrams by
representation theory of $\PGL_{2}$, where the diagrams are
$k$-regular and infinite, but with finite total weight.

\begin{defn}
	A {\bf linear algebraic group} over a field $F$ is a subgroup of $\GL_{d}(F)$ defined by equations polynomial in the entries of the matrix.  A Borel subgroups is maximal closed and connected solvable subgroup, usually corresponding  to the upper riangular or lower riangular matrices for some choice of basis for the underlying vector space.
\end{defn}